\documentclass[11pt]{article}
\usepackage[curve]{xypic}
\usepackage{graphicx}
\usepackage{longtable}
\usepackage{rotating}
\usepackage{multirow}
\usepackage[active]{srcltx}
\usepackage[english]{babel}
\usepackage{amsmath,amsfonts,amssymb,amscd,color,epsfig,amsthm}
\usepackage{titletoc}
\usepackage{mathrsfs}
\usepackage{appendix}
\usepackage{enumitem}
\usepackage{comment}
\usepackage{hyperref}
\usepackage{upgreek}
\usepackage{tikz,tikz-cd}
\setenumerate[1]{itemsep=0pt,partopsep=0pt,parsep=\parskip,topsep=5pt}
\setitemize[1]{itemsep=0pt,partopsep=0pt,parsep=\parskip,topsep=5pt}
\setdescription{itemsep=0pt,partopsep=0pt,parsep=\parskip,topsep=5pt}
\usepackage{caption}

\numberwithin{equation}{section}

\newtheorem{theorem}{Theorem}[section]
\newtheorem{lemma}{Lemma}[section]
\newtheorem{proposition}{Proposition}[section]

\newtheorem{claim}{Claim}[section]

\newtheorem*{theoremA}{Theorem A}

\newtheorem*{theoremM}{McMullen's theorem}
\newtheorem*{proposition I}{Proposition I}
\newtheorem*{proposition II}{Proposition II}

\newcommand{\sm}{\setminus}

\begin{document}
	\title{On the parabolic Fatou domains II: rigidity
	}
	\author{Ning Gao
		\and Yan Gao\thanks{The second author is supported by the National Key R\&D Program of China Grant Nos. 2021YFA1003203, the NSFC Grant Nos. 12131016 and
			12322104, and the NSFGD Grant Nos. 2023A1515010058.}
		\and Wenjuan Peng\thanks{The third author is supported by the NSFC Grant Nos. 12122117, 12271115 and 12288201.}}
	\date{\today}
	\maketitle
\begin{abstract}
This paper is a follow-up study on the holomorphic model problem for infinitely-connected parabolic Fatou domains of rational maps.
We prove that simple parabolic maps serve as holomorphic models for such parabolic Fatou domains. Moreover, we show that every simple parabolic map can be perturbed into a rational map with a completely invariant attracting Fatou domain without changing the topology of the Julia set, thereby confirming the Goldberg--Milnor conjecture for simple parabolic maps.
\end{abstract}

\section{Introduction}
Let $f$ be a rational map of the Riemann sphere $\widehat{\mathbb{C}}$ to itself of degree $d \geq 2$. 
The \textbf{Fatou set} $F(f)$ is the set of points $z$ for which $\{f^n\}_{n\geq 0}$ forms a normal family in a neighborhood of $z$. Its complement is the \textbf{Julia set} $J(f)$. By definition, $F(f)$ is open and $J(f)$ is closed. Moreover, both sets are completely invariant (i.e., $f^{-1}(F(f))=F(f)$ and $f^{-1}(J(f))=J(f)$). Refer to \cite{Milnor} for basic properties of the Fatou and Julia sets.

A connected component of $F(f)$ or $J(f)$ is called a Fatou domain or a Julia component, respectively. A Fatou domain or a Julia component is periodic if it is fixed by $f^p$ for some $p \geq 1$, strictly preperiodic if 
it is not periodic but some forward image is periodic, and wandering otherwise. Sullivan \cite{Sullivan} proved that a rational map has no wandering Fatou domains. Periodic Fatou domains are classified into (super)attracting basins, parabolic basins, Siegel disks, and Herman rings (see \cite{F,S,H}). 

For a rational map $f$ with a fixed Fatou domain $U$, we call a rational map $g$ a \textbf{(holomorphic) model} of $(f,U)$ if:
\begin{itemize}
	\item[(1)] $g$ has a completely invariant Fatou domain $V$ such that $(f,U)$ and $(g,V)$ are conformally conjugate\footnote{For two rational maps $R_1,R_2$ and sets $D_1, D_2 \subset \widehat{\mathbb{C}}$ satisfying $R_1(D_1) = D_1$ and $R_2(D_2) = D_2$, we say that $(R_1,D_1)$ and $(R_2,D_2)$ are topologically, quasiconformally, or conformally conjugate if there exists a homeomorphism, quasiconformal map, or conformal map $\varphi: D_1 \to D_2$ such that $\varphi\circ R_1=R_2\circ\varphi$ on $D_1$. Such a map $\varphi$ is called a topological, quasiconformal, or conformal conjugacy, respectively.};
	\item[(2)] $g$ is unique up to conformal conjugacy on $\widehat{\mathbb{C}}$, i.e., any other rational map satisfying condition~(1) is conformally conjugate to $g$ on $\widehat{\mathbb{C}}$.
\end{itemize}
The problem of finding models for fixed Fatou domains is solved except when $U$ is an infinitely-connected parabolic Fatou domain; see \cite{GGP} for progress on this problem.

In \cite{GGP}, we constructed maps satisfying condition~(1) in this exceptional case. A rational map is called a \textbf{simple parabolic map} if it has a completely invariant parabolic Fatou domain and every non-singleton Julia component is a Jordan curve bounding an eventually superattracting Fatou domain that contains at most one postcritical point. Furthermore, each such Julia component is a quasicircle whenever its forward orbit avoids the parabolic fixed point.

\begin{theoremA}{\rm(\cite[Theorem 1.1]{GGP})}
Let $f$ be a rational map with an infinitely-connected fixed parabolic Fatou domain $U$. Then there exists a simple parabolic map $g$ with a completely invariant parabolic Fatou domain $V$ such that $(f,U)$ is conformally conjugate to $(g,V)$.
\end{theoremA}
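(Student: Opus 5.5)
The plan is to build $g$ by quasiconformal surgery. The idea is to keep the dynamics of $f$ on $U$ and replace everything outside $U$ by simple superattracting pieces.

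\textbf{Step 1: simplify the complement.} Since $U$ is infinitely connected and fixed, $\widehat{\mathbb{C}}\setminus U$ has infinitely many components. Using the parabolic structure (attracting petal, Fatou coordinate, and the fact that $f|_U$ is a proper map of degree $k\ge 2$), I would choose a nested exhaustion of $U$ by finitely connected domains $U_0\subset U_1\subset\cdots$ bounded by smooth Jordan curves, with $f^{-1}(U_n)\cap U=U_{n+1}$ off the petal. Each complementary component $E$ of $\overline{U_n}$ is either
\begin{itemize}
\item a disk containing a component of $J(f)$ that is mapped into the same kind of disk, or
\item a disk containing the parabolic point on its boundary.
\end{itemize}
This gives a combinatorial self-map on the tree of complementary disks, and $f$ acts on it as a branched covering.

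\textbf{Step 2: surgery.} I would define a branched covering $G$ of $\widehat{\mathbb{C}}$ with $G=f$ on $U$. On each complementary disk $D$ (a component of $\widehat{\mathbb{C}}\setminus U$, or rather of $\widehat{\mathbb{C}}\setminus \overline{U_n}$ near its boundary), I replace $f$ by a map $D\to D'$ modeled on $z\mapsto z^{m}$ in suitable coordinates. Here $m$ is the degree of $f$ on the boundary curve, and the extension is chosen with at most one critical value, so that every disk becomes an eventually superattracting basin containing at most one postcritical point.

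The case where a boundary curve passes near the parabolic point needs the most care. There I would keep $f$ on a neighbourhood of the parabolic point intersected with the closure of $U$, and glue so that the resulting Jordan curve passes through the parabolic fixed point. I would then pull back the standard complex structure on $U$ by $G$, and take the standard structure on the glued disks, invariant under the $z^m$ models. Since each orbit passes through the interpolation annuli only finitely often, the dilatation is bounded. The measurable Riemann mapping theorem then produces a rational $g$ quasiconformally conjugate to $G$, and the conjugacy is conformal on $U$ because the structure there is $f$-invariant and standard. This gives the conformal conjugacy $(f,U)\cong(g,V)$.

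\textbf{Step 3: verify that $g$ is a simple parabolic map.} I need to check the following.
\begin{itemize}
\item $V$ is completely invariant: all preimages of $V$ lie in $V$ by construction.
\item Non-singleton Julia components are exactly the boundaries of the glued disks, which are Jordan curves.
\item Components away from the parabolic orbit are quasicircles. This follows because they are quasiconformal images of analytic curves, via the surgery and pullback of invariant circles under $z^m$.
\item All other Julia components are points. Any other component is a nested intersection of shrinking complementary regions, and shrinking follows from the expansion away from the parabolic basin.
\end{itemize}

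\textbf{Main obstacle.} The hardest part is the finite-dilatation control near the parabolic point. Infinitely many complementary disks accumulate there, and orbits spend arbitrarily long times in the petal, so the naive gluing could compose unboundedly many non-conformal pieces. I would first try to make all gluing maps conformal in the Fatou coordinate, so that modifications occur only in a fundamental domain that each orbit crosses at most once. The second delicate point is showing that the point components are genuinely singletons. For this I would compare moduli of nested annuli in $V$ with those in $U$, using the conformal conjugacy.
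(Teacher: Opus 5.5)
Your scheme (keep $f$ on a finitely connected $U_{N+n}$, replace each complementary Jordan domain by a one-critical-point branched cover, straighten, and rebuild the conformal conjugacy on the new basin by lifting from $U_{N+n}$) is the natural one. Note that you cannot literally keep $G=f$ on all of $U$: the parts of $U$ inside the replaced domains are discarded. The two steps that carry the real content are not supplied, though, and the arguments you indicate for them fail.

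\textbf{Dilatation near the parabolic orbit.} ``Each orbit meets the interpolation annuli only finitely often'' gives no bound, because Shishikura's principle needs a \emph{uniform} bound on the number of visits. In the attracting case that bound (one visit) comes from $\overline{U_n}\subset U_{n+1}$: each collar is mapped into the forward-invariant region where $G=f$. Here $\partial U_n\cap\partial U_{n+1}$ contains the parabolic point and its preimages, since nested puzzle boundaries touch (cf.\ (P1)(c)). Near those points a collar is mapped into the collar of the deeper piece touching there. So orbits lingering near $0$ in the \emph{repelling} directions pick up dilatation unboundedly often. The danger is not in the petal, which lies in $U_{N+n}$ where $G=f$ is holomorphic.

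You also cannot escape by keeping $G$ holomorphic near $0$. Together with $G=f$ on $U_{N+n}$, the identity theorem would force $G=f$ near $0$. But Theorem A only assumes $U$ fixed, so $0$ may have further attracting petals in other parabolic basins of $f$. These would survive as parabolic basins of $g$ off $V$, which is impossible for a simple parabolic map: its parabolic point has a single petal, lying in $V$. So the germ at $0$ must genuinely change. You need a quasiconformal local model at $0$ that is conformally compatible with $f$ on $U$ near $0$, carries an invariant structure of bounded dilatation, and is transported to all preimages of $0$ on puzzle boundaries. That is a statement of the type of Lemma~\ref{Cui-Tan} plus a careful gluing. ``Make the gluing conformal in the Fatou coordinate'' names this goal but does not construct it. Relatedly, the non-singleton component through $0$ (if any) is not the boundary of a polynomial-like filled set, since its pieces touch at $0$. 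That it is a Jordan curve therefore needs its own argument.

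\textbf{Point components.} ``Shrinking follows from expansion away from the parabolic basin'' is not available. The model $g$ in general has critical points on $J(g)$. They come from critical points of $f$ in components of $\widehat{\mathbb C}\setminus U$ that must collapse to points in the model, e.g.\ critical points of $f$ that are themselves point components. Their orbits may be recurrent, even persistently recurrent; this is the $\operatorname{Crit_{p2}}$, $J_{42}$ situation of this paper. So $g$ is not expanding on $J(g)$. Shrinking of the non-periodic nests is an a priori bounds statement for non-renormalizable puzzles (KSS nests, Kahn--Lyubich or Qiu--Yin type arguments), not a soft consequence of the surgery.

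Comparing moduli in $V$ and $U$ through $h$ cannot substitute for this. The map $h$ only transports annuli lying in the basin, whose moduli are the same for $f$ and $g$. Such an argument is therefore blind to the simplification of the dynamics off the basin. That simplification is the only reason nests of $g$ should shrink where the corresponding continua $\widehat E$ of $f$ need not be points.

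The paper quotes Theorem A from \cite{GGP} without proof, so there is no in-paper proof to compare against. But these are exactly the two difficulties it must confront again for rigidity: touching puzzle boundaries at the parabolic orbit, handled by thickened pieces and Lemma~\ref{Cui-Tan}, and recurrent critical points, handled by KSS nests.
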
 

Thus, the following theorem, together with Theorem A, implies that simple parabolic maps are the desired holomorphic models for infinitely-connected parabolic Fatou domains.

\begin{theorem}\label{main theorem}
Let $f$ and $g$ be two simple parabolic maps with completely invariant parabolic Fatou domains $U$ and $V$, respectively. If $(f,U)$ and $(g,V)$ are conformally conjugate, then $f$ and $g$ are conformally conjugate on $\widehat{\mathbb{C}}$.
\end{theorem}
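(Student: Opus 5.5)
Let $\phi:U\to V$ be the given conformal conjugacy, $\phi\circ f=g\circ\phi$ on $U$. The plan is to extend $\phi$ to a global conjugacy in three stages: first to a homeomorphism of the sphere conjugating $f$ to $g$, then to a quasiconformal conjugacy, and finally to a conformal one by showing the Julia set carries no invariant line field.

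\textbf{Stage 1: topological extension.} Since $U$ and $V$ are completely invariant, $J(f)=\partial U$ and $J(g)=\partial V$. The complementary components of $U$ are the Julia components, so each is either a point or, by the simple parabolic hypothesis, a Jordan curve $\gamma$ bounding an eventually superattracting disk $D_\gamma$ with at most one postcritical point. I would first show that $\phi$ extends continuously and injectively to $\overline U$. For point components, I would use a fundamental-domain/puzzle argument in $U$: the parabolic petal gives a nested sequence of annuli in $U$ around each point component, built by pulling back a fixed annulus, and $\phi$ carries these to the corresponding annuli around a point component of $J(g)$. The point component is an intersection of nested disks whose moduli sum diverges, so the shrinking is controlled. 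For Jordan-curve components $\gamma$, the boundary of $U$ near $\gamma$ contains a collar, so prime ends give a boundary homeomorphism $\gamma\to\gamma'$. Once this is done, I would extend into each $D_\gamma$. On periodic superattracting disks, use Böttcher coordinates: $f^p|_{D_\gamma}$ is conjugate to $z^k$, and I match degrees using the boundary conjugacy, since the degree of $f^p$ on $\gamma$ equals the degree on $\phi(\gamma)$. This yields a conjugacy extending the boundary map, namely the Böttcher map composed with a rotation compatible with the boundary. On strictly preperiodic disks, pull back. The condition of at most one postcritical point is what makes each pullback a branched cover of the form $z^m$ centered at that point, so the lifts are unique once the boundary lift is fixed.

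\textbf{Stage 2: quasiconformality.} I would modify the extension inside superattracting disks to be quasiconformal, for example radial extensions in Böttcher coordinates. The remaining issue is regularity across $J$. The quasicircle hypothesis on components whose orbit avoids the parabolic point gives uniform quasiconformal extension across such curves. The components that hit the parabolic point, together with the point components, need a separate argument. For those I would use the standard fact that a homeomorphism that is conformal off a set is quasiconformal if that set is removable or has a controlled structure; alternatively, I would construct the conjugacy by pullback iteration. Concretely, start with a qc map $h_0$ equal to $\phi$ on $U$ near the parabolic point and quasiconformal elsewhere, then lift: $h_{n+1}\circ f=g\circ h_n$. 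The dilatation stays bounded because it is only created in the superattracting disks, which are eventually mapped conformally. The $h_n$ converge on the grand orbit of $U$, which is dense, giving a qc conjugacy $h$ with $h=\phi$ on $U$.

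\textbf{Stage 3: rigidity.} The Beltrami coefficient $\mu$ of $h$ vanishes on $U$ and can be chosen zero on the superattracting disks, using Böttcher coordinates on both sides, which are conformal. It therefore remains to show that $J(f)$ carries no invariant line field, or has measure zero. I expect this step to be the main obstacle. My first attempt would be to prove that $J(f)$ has zero Lebesgue measure. Away from the parabolic orbit, $f$ is expanding in a suitable sense on the nonescaping part: every critical point lies in $U$ or in a superattracting disk, so the map is geometrically finite. Lyubich–Minsky and Urbański results then give area zero for geometrically finite maps. Once $\mu=0$ a.e., $h$ is conformal, hence Möbius, and it conjugates $f$ to $g$ on $\widehat{\mathbb{C}}$.
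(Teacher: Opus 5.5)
Your overall architecture (topological conjugacy, then quasiconformal conjugacy, then no invariant line fields, with B\"ottcher coordinates and a rotation inside the superattracting disks) matches the paper's. The argument nevertheless has a genuine gap at its core.

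\textbf{Stage 3.} You assert that every critical point of $f$ lies in $U$ or in a superattracting disk, so that $f$ is geometrically finite and $\operatorname{mes}J(f)=0$. The definition of a simple parabolic map does not give this. It only constrains the \emph{non-singleton} Julia components. A critical point may itself be a singleton Julia component whose orbit is recurrent, even persistently recurrent and non-periodic. This is exactly what the classes $\operatorname{Crit_{p}}$ and $\operatorname{Crit_{p2}}$ in the paper are designed to handle. For such maps the geometrically finite area dichotomy (Urba\'nski, McMullen) is unavailable, and the paper never proves $\operatorname{mes}J(f)=0$. Instead it does three things:
\begin{itemize}
\item It proves area zero only for $J_1\cup J_2\cup J_3\cup J_{41}$, using McMullen's theorem for the non-singleton components and bounded degree plus a Lebesgue-density argument for the rest.
\item It notes that $J_0$ and $J_w$ are countable.
\item On $J_{42}\setminus J_w$ it rules out line fields pointwise, combining almost continuity of $\mu$ with Shen's criterion (Lemma~\ref{Shen}). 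The criterion is applied to the degree-bounded proper maps $f^{-v_n}\circ f^{u_n+t_n}\colon U_n(x)\to V_n(x)$, whose domains and ranges have uniformly bounded shape thanks to the KSS nests.
\end{itemize}
Some argument of this kind is indispensable.

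\textbf{Stage 2.} The same omission breaks this stage. Your pullback scheme $g\circ h_{n+1}=h_n\circ f$ needs an initial quasiconformal $h_0$ already compatible with the postcritical sets, with the correct isotopy class rel $P(f)$. When $P(f)\cap J(f)$ is infinite and recurrent, producing such an $h_0$ is essentially the rigidity statement itself. The singleton components are neither known to be removable nor known to have zero area. The paper instead builds the topological conjugacy $H$ first and verifies the Cao--Wang--Yin criterion (Proposition~\ref{qc criterion}) on the decomposition of $J(f)$:
\begin{itemize}
\item finite lower dilatation on $J_1\cup J_2\cup J_3\cup J_{41}$, via bounded degree and Ha\"issinsky's shape lemma;
\item a Markov family of uniformly bounded shape on $J_{42}$, coming from the KSS nests.
\end{itemize}
The non-quasicircle components through the parabolic point also need real work: Cui--Tan's extension lemma on thickened depth-$0$ pieces, McMullen's theorem, and ray-cut puzzles. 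Your ``removable or controlled structure'' does not supply this.

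\textbf{Stage 1.} This stage contains two concrete errors.
\begin{itemize}
\item A Jordan-curve component $E$ has no collar in $U$. Other Julia components, inside the pieces of $\mathcal B_n(E)$, accumulate on $E$ from the $U$-side, so prime ends of $U$ give no boundary map. The paper first replaces $h$ on those pieces to obtain $\phi_E$ on the genuine annulus $\widehat P_0(E)\setminus\widehat E$. It then controls $H$ against $\phi_E$ via hyperbolic diameters of the pieces.
\item There is no divergent-moduli estimate around point components, since consecutive puzzle pieces may touch at preimages of $0$. Continuity there comes from the topological shrinking $\bigcap_n\widetilde P_n(E)=\widehat E$ on both sides, together with $H=h$ on puzzle boundaries.
\end{itemize}
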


This theorem establishes the rigidity of simple parabolic maps. Roughly speaking, rigidity in complex dynamics means that a weak form of conjugacy implies a stronger one. Such rigidity plays a fundamental role in the field. For instance, the well-known \textbf{MLC conjecture}, which asserts that the Mandelbrot set is locally connected, is equivalent to the combinatorial rigidity conjecture for quadratic polynomials (see \cite{RS}).

The proof of rigidity usually consists of two steps: establishing quasiconformal rigidity and ruling out invariant line fields on Julia sets. Hence, to prove Theorem~\ref{main theorem}, it suffices to demonstrate the following two propositions.

\begin{proposition}\label{qc conj}{\rm(Quasiconformal rigidity)}
Suppose that $f$ and $g$ are two simple parabolic maps with completely invariant parabolic Fatou domains $U$ and $V$, respectively. If $(f,U)$ and $(g,V)$ are conformally conjugate, then $f$ and $g$ are quasiconformally conjugate on $\widehat{\mathbb{C}}$.
\end{proposition}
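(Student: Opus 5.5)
We would build the quasiconformal conjugacy in stages, starting from the given conformal conjugacy $\phi:U\to V$ and extending it across the complement $\widehat{\mathbb{C}}\setminus U=J(f)\cup(\text{eventually superattracting Fatou domains})$.

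\textbf{Step 1: extension to the Julia set.} Since $U$ is completely invariant, $J(f)=\partial U$. We first show that $\phi$ extends to a homeomorphism $\overline{U}\to\overline{V}$ conjugating $f$ to $g$ on $J(f)$. For the singleton Julia components, we would use the structure of simple parabolic maps: the non-singleton components are Jordan curves, hence well separated. We would also use shrinking of nested puzzle-type neighbourhoods, built from equipotential-like curves in the parabolic basin (level curves of a Fatou coordinate pulled back along the dynamics) together with the boundary curves of the superattracting domains. The conjugacy $\phi$ sends these curves to the corresponding curves for $g$. So it suffices to show that the diameters of nested pieces tend to zero on both sides. For pieces not touching the parabolic point, we expect this to follow from a Koebe/modulus argument, since no critical points accumulate there. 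Near the parabolic cycle, we would instead use the explicit local dynamics in repelling petals.

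For a Jordan-curve component $\partial W$, $W$ eventually superattracting, the prime ends of $U$ at $\partial W$ are accessible from $U$. So $\phi$ induces a circle homeomorphism between $\partial W$ and the corresponding $\partial W'$ that conjugates $f^k$ to $g^k$.

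\textbf{Step 2: extension into the superattracting domains.} Each such $W$ contains at most one postcritical point, so on the periodic ones $f^p|_W$ is conformally conjugate (B\"ottcher) to $z\mapsto z^m$ on $\mathbb{D}$, and likewise for $g$, with matching degree $m$. Degrees match because the topological conjugacy on $\partial W$ preserves the covering degree. The boundary map is a conjugacy of $z^m$ on the circle, hence a rotation by an $(m-1)$-th root of unity. Adjusting the B\"ottcher coordinate accordingly, we get a conformal conjugacy on $W$ extending continuously to the boundary. We then propagate to preperiodic domains by lifting along $f$ and $g$. Postcritical points are placed at the centers, so the lifts are branched consistently. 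Because each domain contains at most one postcritical point, every lift of the map on the image domain exists and is unique given the boundary values.

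\textbf{Step 3: quasiconformality.} The resulting global homeomorphism $h$ is conformal off $J(f)$. We need $h$ to be quasiconformal, which reduces to showing that $J(f)$ is removable for such homeomorphisms, or that $h$ is quasiconformal near $J(f)$. The plan is to first replace $h$ by a qc map $h_0$ isotopic to it rel the postcritical set, then run the standard pullback argument $h_{n+1}=g^{-1}\circ h_n\circ f$ with uniformly bounded dilatation, converging to $h$ on $\bigcup f^{-n}(\text{Fatou})$ and hence everywhere by density. To get $h_0$, we use the quasicircle hypothesis: components avoiding the parabolic orbit are quasicircles, so the boundary circle maps are quasisymmetric and admit qc extensions. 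Near the parabolic point we would build $h_0$ by hand using Fatou coordinates in attracting and repelling petals, which conjugate on each side.

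\textbf{Main obstacle.} The hardest step should be the neighbourhood of the parabolic point and of the Jordan-curve components whose orbits hit it, since these are not quasicircles (they have cusps). There I would first try to match Fatou coordinates in repelling petals: $\phi$ commutes with translation in the attracting Fatou coordinate, and the horn maps of $f$ and $g$ should then be conformally conjugate. This would give a local qc, even conformal, identification near the parabolic point that can be glued to $h_0$ elsewhere.
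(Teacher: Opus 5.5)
There is a genuine gap, and it sits where the paper does most of its work: critical points of $f$ on \emph{singleton} Julia components. A simple parabolic map may have such critical points with infinite, recurrent, even persistently recurrent orbits. This is why the paper sorts $\operatorname{Crit}$ into $\operatorname{Crit_n}$, $\operatorname{Crit_r}$, $\operatorname{Crit_p}$, and so on.

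Already in Step~1, your claim that nested pieces shrink by a Koebe/modulus argument ``since no critical points accumulate there'' rests on a false premise. The shrinking $\bigcap_n\widetilde P_n(E)=\widehat E$ is a nontrivial property of the parabolic puzzle, (P7), which the paper takes as known. Your appeal to prime ends of $U$ at $\partial W$ is also not meaningful as stated, because infinitely many other boundary components of $U$ accumulate on $\partial W$. The paper first fills the pieces of $\mathcal B_n(E)$ to get an honest annulus. It then compares $H$ with $\phi_E$ by a hyperbolic-diameter estimate.

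The decisive problem is Step~3. The pullback $h_{n+1}=g^{-1}\circ h_n\circ f$ needs a quasiconformal $h_0$ isotopic to $h$ rel $P(f)$, hence with $h_0=h$ on $P(f)\cap J(f)$. This set can be infinite, for instance a Cantor set generated by a persistently recurrent critical point. Then producing $h_0$ amounts to showing that $h|_{P(f)\cap J(f)}$ extends quasiconformally, which is the heart of the rigidity problem. Neither the quasicircle hypothesis nor a Fatou-coordinate model at the parabolic point gives this. Your scheme is the one that works for geometrically finite maps, where $P(f)\cap J(f)$ is finite. It does not cover general simple parabolic maps.

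The missing ingredient is a direct verification of quasiconformality on the Julia set. The paper applies the Cao--Wang--Yin criterion (Proposition~\ref{qc criterion}) to $H$ itself. It first decomposes $J(f)=J_0\sqcup J_1\sqcup J_2\sqcup J_3\sqcup J_{41}\sqcup J_{42}$ according to the critical points each point accumulates on.
\begin{itemize}
\item On $J_2\cup J_3\cup J_{41}$, bounded-degree returns to a fixed piece together with Ha\"{\i}ssinsky's shape lemma give zero area and $\underline{\mathcal H}(H,x)<\infty$.
\item On $J_1$, the same conclusions come from McMullen's theorem, the ray-cut puzzles, and comparing $H$ with the local maps $\phi_E$.
\item On $J_{42}$, the KSS nests with uniform modulus and shape bounds (Proposition~\ref{KSS nest}) supply the uniform-shape Markov family required in condition~(3).
\end{itemize}
Your idea of matching Fatou coordinates near $0$ is in the spirit of the paper's use of the Cui--Tan extension lemma. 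The paper must additionally thicken the depth-$0$ piece, because puzzle boundaries of different depths meet at preimages of $0$. Without a substitute for the bounded-degree and KSS analysis, however, your plan does not establish the proposition.
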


\begin{proposition}\label{line field}{\rm(No invariant line fields)}
A simple parabolic map carries no invariant line fields on its Julia set.
\end{proposition}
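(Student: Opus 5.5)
We prove Proposition~\ref{line field} by contradiction, using the standard Lebesgue density/blow-up argument. The plan is to show that almost every point of $J(f)$ either lies on a non-singleton Julia component, where a line field cannot be supported, or is a point whose orbit admits good univalent pullbacks.

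Suppose $\mu$ is an $f$-invariant Beltrami differential with $|\mu|=1$ on a positive-measure subset $E\subset J(f)$ and $\mu=0$ off $J(f)$. We split $J(f)$ into two parts.

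\emph{Non-singleton components.} By definition of a simple parabolic map, each such component is a Jordan curve bounding an eventually superattracting Fatou domain. For components whose orbit avoids the parabolic point, the component is a quasicircle and hence has zero area. For the remaining ones, the component is eventually mapped onto a periodic Jordan curve through the parabolic point. That curve is the boundary of a superattracting domain on which $f^p$ is conjugate to $z\mapsto z^k$ (it contains at most one postcritical point). It is therefore a Jordan curve that is locally a quasiarc away from the parabolic point. Near the parabolic point, the Fatou coordinate / Leau--Fatou flower structure should give zero area. Since there are only countably many such components, their union has measure zero and can be discarded.

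\emph{Point components.} Let $x\in E$ be a density point of $E$, a point of continuity of $\mu$ in measure, and a point whose Julia component is a singleton. We want a sequence of iterates $f^{n_k}$ and disks $B_k\ni x$ shrinking to $x$ such that $f^{n_k}:B_k\to f^{n_k}(B_k)$ has uniformly bounded degree and bounded distortion, with image of definite size. Then the standard argument (as in McMullen's \emph{Complex dynamics and renormalization}) forces $\mu$ to be holomorphic, i.e.\ univalent-image-invariant, on an open set meeting $J(f)$, contradicting the presence of the expanding dynamics there. To obtain $B_k$, I would treat conical/radial points first. If $\omega(x)$ does not contain the parabolic point and is not contained in the postcritical set, then Koebe applies directly. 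Otherwise we use the structure of simple parabolic maps. The postcritical set in $J(f)$ is at most the parabolic point plus critical points attracted to superattracting domains, because critical orbits lie in $U$ (converging to the parabolic point) or in superattracting domains. Thus $P(f)\cap J(f)$ should be only the parabolic point's orbit. Hence points whose orbits stay away from the parabolic point are conical and Lebesgue typical. The set of points whose orbit accumulates only on the parabolic point has measure zero, by the known result that for rational maps $J(f)$ either has zero measure for parabolic-type nonconical points or the map is a Lattès map (Lyubich/McMullen: a.e.\ point of $J$ has $\omega(x)\subset P(f)$ or $J=\widehat{\mathbb C}$). Since $P(f)\cap J(f)$ is finite here and the parabolic point is repelling along its repelling petals, a Denker--Urbański type argument shows that the set of points converging to a parabolic cycle within $J$ has measure zero.

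\emph{Main obstacle.} The hard step is controlling points whose orbits repeatedly approach the parabolic point, where the derivative degenerates. My first approach is to use Fatou coordinates in the repelling petals: the orbit leaves each neighborhood after a controlled number of steps. This gives univalent inverse branches with bounded distortion on disks comparable to the distance to the parabolic point, so the conical-type estimate still holds along a subsequence. If that fails, I would instead show that this set of points has zero area, as in Urbański's results for parabolic rational maps, and conclude via the Lyubich--McMullen dichotomy.
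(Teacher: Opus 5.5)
\paragraph{A genuine gap: critical points in the Julia set.} Your proof rests on the claim that every critical orbit lies in $U$ or in a superattracting domain, so that $P(f)\cap J(f)$ is just the orbit of the parabolic point. This is false for simple parabolic maps. The definition only restricts the non-singleton Julia components. It places no condition on critical points lying in singleton Julia components, and such critical points can have infinite, recurrent forward orbits. This is exactly why the paper introduces the set $\operatorname{Crit}$ and classifies it by recurrence type ($\operatorname{Crit_n}$, $\operatorname{Crit_r}$, $\operatorname{Crit_p}$, $\operatorname{Crit_{p2}}$, \dots).

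In fact, if every critical point in $J(f)$ had a finite orbit, then $P(f)\cap J(f)$ would be finite and $f$ would be geometrically finite. Since $J(f)\neq\widehat{\mathbb C}$, this already gives $\operatorname{mes}(J(f))=0$, so your argument only covers a case where the proposition is immediate. Once $J(f)$ contains recurrent critical orbits, two of your steps fail:
\begin{itemize}
\item points whose orbits stay away from the parabolic point need not be conical, since they may accumulate on those critical orbits;
\item the fact that almost every orbit accumulates on $P(f)$ no longer reduces matters to a null set.
\end{itemize}

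\paragraph{The missing idea: persistently recurrent critical points.} You give no treatment of points whose orbits accumulate on persistently recurrent, non-periodic critical points in $J(f)$; this is the set $J_{42}$. For such points there is no sequence of bounded-degree pullbacks with bounded distortion and definite image. Nor is there any a priori reason for $J_{42}$ to have measure zero, so it cannot simply be discarded. The paper proceeds as follows.
\begin{itemize}
\item It decomposes $J(f)=J_0\sqcup J_1\sqcup J_2\sqcup J_3\sqcup J_{41}\sqcup J_{42}$.
\item It shows that $J_0$ and $J_w$ are countable.
\item It shows that $J_1\cup J_2\cup J_3\cup J_{41}$ has measure zero, using the bounded degree condition together with Lemma~\ref{lem:parabolic_escape} for orbits that approach the parabolic point.
\item On $J_{42}\setminus J_w$ it uses the KSS nest, with the uniform modulus and shape bounds of Proposition~\ref{KSS nest}. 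These produce proper maps $h_n\colon U_n(x)\to V_n(x)$ of degree between $2$ and $D$, each with a critical point and bounded shape.
\item Shen's criterion (Lemma~\ref{Shen}) then forces $\mu(x)=0$ or failure of almost continuity at $x$, hence $\mu=0$ almost everywhere there.
\end{itemize}
Nothing in your proposal replaces this last step.

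\paragraph{A secondary problem.} Your zero-area argument for a periodic non-singleton component $E\ni 0$ uses that $E$ is locally a quasiarc away from the parabolic point. But $E$ has a cusp at every iterated preimage of $0$ lying on $E$. These points are dense in $E$, because $f^p|_E$ is conjugate to $w\mapsto w^{d_0}$ on $\mathbb S^1$. The paper instead uses McMullen's theorem to straighten $E$ quasiconformally to the Julia set of a geometrically finite rational map, which has measure zero.
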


Quasiconformal rigidity is known in many settings, such as real polynomials \cite{KSS}, non-renormalizable polynomials and some infinitely renormalizable unicritical polynomials \cite{KvS09,Che10}, rational maps with Cantor Julia sets \cite{Zhai}, Newton maps \cite{DS22,RYZ23}, and certain polynomials with parabolic points \cite{CWY}.

It is conjectured that every rational map carries no invariant line fields on its Julia set, except for a flexible Latt\`es map \cite{McM94}. This conjecture has been confirmed for non-recurrent and parabolic Collet--Eckmann maps \cite{PU01}, weakly hyperbolic maps \cite{Hai}, J-rotational maps \cite{Lim26}, rational maps with Cantor Julia sets \cite{YZ}, and maps satisfying suitable summability conditions \cite{GS09,Mak05}. McMullen and Sullivan~\cite{MS} showed that this conjecture is stronger than the \textbf{Hyperbolicity conjecture}, which states that the set of hyperbolic maps is open and dense in the space of fixed-degree rational maps. 

The main tools to prove Propositions~\ref{qc conj} and~\ref{line field} include parabolic puzzle pieces, a quasiconformal criterion~\cite{CWY} and a line field criterion~\cite{Shen}. 

The proof of Proposition~\ref{qc conj} proceeds in three steps. First, we construct a quasiconformal conjugacy around each non-singleton periodic Julia component. Second, combining these local conjugacies with the conformal conjugacy between the parabolic Fatou domains, we obtain a topological conjugacy on $\widehat{\mathbb{C}}$. Finally, we verify that this topological conjugacy is quasiconformal.

The main difficulty lying in the first step, especially when the parabolic fixed point belongs to a non-singleton Julia component, is that the boundaries of puzzle pieces at different depths may intersect at the parabolic point and its iterated preimages, thereby preventing a direct quasiconformal extension to the regions between nested puzzle pieces.
To overcome this, we thicken the depth-0 puzzle piece around a non-singleton periodic Julia component and construct a local conjugacy on the thickened neighborhood. The construction combines the conformal conjugacy on the parabolic Fatou domain with a local quasiconformal extension near the parabolic point (by~\cite[Lemma 3.4]{CT}). We then apply the quasiconformal criterion to show this conjugacy is quasiconformal.

In order to obtain the global quasiconformal conjugacy, we divide the Julia set according to the recurrence behavior of critical points. In the persistently recurrent case, we further extract the part on which the Kozlovski--Shen--van Strien (KSS) nest constructed in~\cite{KSS} is available, and then verify the quasiconformal criterion. 

In the proof of Proposition~\ref{line field}, we also make use of the same decomposition of the Julia set. On the part admitting a KSS nest, the line field criterion~\cite{Shen} rules out invariant line fields.

Combining Theorem~\ref{main theorem} and \cite[Theorem 1.2]{GGP}, we prove the Goldberg--Milnor conjecture for simple parabolic maps.

\medskip
\noindent\textbf{Goldberg--Milnor conjecture (\cite{GM}).} \emph{For any polynomial $P$ having a parabolic cycle, the immediate basin of the parabolic cycle can be converted to be attracting by a small perturbation, and the perturbed polynomial on its Julia set is topologically conjugate to the original polynomial $P$ on $J(P)$.}

\begin{theorem}\label{perturbation}
The Goldberg--Milnor conjecture holds for every simple parabolic map.
\end{theorem}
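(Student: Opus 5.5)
The plan is to combine Theorem~\ref{main theorem} with \cite[Theorem 1.2]{GGP}, as announced just before the statement. I expect \cite[Theorem 1.2]{GGP} to say roughly the following. Given a rational map $f$ with an infinitely-connected fixed parabolic Fatou domain $U$, there is a family $g_t$ of rational maps with these properties:
\begin{itemize}
\item $g_t \to g_0$ as $t\to 0$, where $g_0$ is a simple parabolic map whose completely invariant parabolic domain $V_0$ satisfies $(g_0,V_0)\cong(f,U)$ conformally, as in Theorem A;
\item each $g_t$ with $t\neq 0$ has a completely invariant attracting Fatou domain;
\item $g_t|_{J(g_t)}$ is topologically conjugate to $g_0|_{J(g_0)}$.
\end{itemize}
This family would be obtained by perturbing the parabolic dynamics on $V_0$ to attracting dynamics on a model (for instance through a Blaschke/quasiconformal surgery on the parabolic domain) while keeping the other data unchanged.

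Let $P$ be an arbitrary simple parabolic map, with completely invariant parabolic Fatou domain $U$. Since $P$ has a completely invariant domain and is not a polynomial-type degenerate case, $U$ is infinitely connected, or at least falls within the scope of \cite{GGP}. If $U$ were finitely connected, $J(P)=\partial U$ would be connected or have finitely many components, and I would treat that case separately by classical parabolic implosion and Cui--Tan type results.

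First, apply \cite[Theorem 1.2]{GGP} to $(P,U)$. This produces a simple parabolic map $g_0$ with $(g_0,V_0)$ conformally conjugate to $(P,U)$, together with attracting perturbations $g_t\to g_0$ whose Julia-set dynamics are topologically conjugate to that of $g_0$.

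Second, invoke Theorem~\ref{main theorem}. Both $P$ and $g_0$ are simple parabolic maps with conformally conjugate completely invariant parabolic domains, so there is a Möbius map $M$ with $M\circ P=g_0\circ M$.

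Third, set $P_t=M^{-1}\circ g_t\circ M$. Then $P_t\to P$, each $P_t$ has a completely invariant attracting basin (so its parabolic basin has become attracting), and $P_t|_{J(P_t)}$ is topologically conjugate to $P|_{J(P)}$ by composing the conjugacies. This is precisely the Goldberg--Milnor conclusion for $P$.

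The only genuine obstacle lies in matching hypotheses. One must ensure that the perturbation theorem of \cite{GGP} is stated for (or applies to) an arbitrary simple parabolic map, or at least to the model produced from $(P,U)$. One must also check that the perturbations are small in the space of rational maps of the same degree. If \cite[Theorem 1.2]{GGP} is only stated for the specific model constructed there, the rigidity Theorem~\ref{main theorem} is exactly what bridges the gap, and the argument above handles it. The degree is preserved because conformal conjugacy on $U$ fixes the degree of $P|_U$, which equals $\deg P$ by complete invariance.
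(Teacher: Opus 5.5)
Your proposal is correct and follows the paper's proof essentially step for step. In the paper, \cite[Theorem 1.2]{GGP} supplies attracting perturbations $g_n$ converging to a simple parabolic map $g$ with $(g,V)$ conformally conjugate to $(f,U)$ and Julia-set dynamics topologically conjugate to that of $f$. Theorem~\ref{main theorem} then identifies $g$ with $f$ by a M\"obius conjugacy, which is exactly your transport $P_t=M^{-1}\circ g_t\circ M$. The only difference is that the paper applies \cite[Theorem 1.2]{GGP} directly to the given simple parabolic map, so your side discussion of a possible finitely-connected case is not needed.
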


For simple parabolic maps with Cantor Julia sets, all these results are already known: quasiconformal rigidity follows from \cite{Zhai}, no invariant line fields follows from \cite{YZ}, and the Goldberg--Milnor conjecture was confirmed in \cite[Theorem~1.3]{GGP}. Thus, this paper focuses on the remaining case where non-singleton Julia components occur.

\vspace{0.2em}
The paper is organized as follows. In Section 2, we introduce puzzles for simple parabolic maps. In Section 3, we construct local quasiconformal conjugacies around non-singleton periodic Julia components. We extend these local conjugacies to obtain a global topological conjugacy and prove Proposition~\ref{qc conj} in Section 4. Section 5 is devoted to the proof of Proposition~\ref{line field}. In Section 6, we derive Theorems~\ref{main theorem} and~\ref{perturbation}. 

\vspace{0.2em}
Throughout the paper, \textbf{length} and \textbf{mes} denote the one-dimensional Hausdorff measure and planar Lebesgue measure, respectively. For simplicity, we use the term \textbf{component} to refer to a connected component. For a set $A\subset\mathbb{C}$ and a connected subset $E\subset A$, we denote by $\operatorname{Comp}_{E}A$ the component of $A$ containing $E$.

\section{Puzzles for a simple parabolic map}
In this section, we introduce the construction and basic properties of puzzles for simple parabolic maps. 

Suppose that $f$ and $g$ are simple parabolic maps with completely invariant parabolic Fatou domains $U$ and $V$, respectively. Let $h$ be the conformal map from $U$ to $V$ satisfying $h \circ f = g \circ h$. Let $C(f)$ denote the set of critical points of $f$, and $P(f)$ denote the postcritical set, defined as
\[P(f)=\overline{\{f^n(c) \mid c \in C(f), \, n\geq1\}}.\] The sets $C(g)$ and $P(g)$ are defined analogously for the map $g$.

Let $x_f$ and $x_g$ be the parabolic fixed points of $f$ and $g$, respectively. Choose a point $b\in U$. After conjugating $f$ and $g$ separately by suitable M\"obius transformations and replacing $h$ accordingly, we may assume that $x_f=x_g=0$ and $b=h(b)=\infty$. We still denote the resulting maps and domains by $f,g,U,V$, and the corresponding conformal conjugacy from $U$ onto $V$ by $h$. Thus, $U$ and $V$ are the immediate parabolic basins of $0$ for $f$ and $g$, respectively, and $h(\infty)=\infty$.

Denote by $\mathcal{E}_f$ and $\mathcal{E}_g$ the collections of all components of $\partial U$ and $\partial V$, respectively. The map $f$ induces a surjective map $\sigma_f\colon \mathcal{E}_f \to \mathcal{E}_f$. For each $E \in \mathcal{E}_f$, its \textbf{filling} $\widehat{E}$ is defined to be the union of $E$ and all bounded components of $\mathbb{C}\sm E$. We define the degree of $\sigma_f$ on $E$ by
\[
\deg_E\sigma_f
:=
\sum_{z\in \widehat E}(\deg_z f-1)+1.
\]
If $\deg_E\sigma_f>1$, we say that $E$ is \textbf{critical}. Analogous definitions carry over to the map $g$ and the collection $\mathcal{E}_g$.

By the Leau-Fatou flower theorem \cite{Milnor}, there exists a Jordan domain $U_0\subset U$ with smooth boundary except at $0$, called an \textbf{attracting petal} of $0$, such that
\begin{itemize}[leftmargin=1cm]
	\item[(1)] $0\in\partial U_0$ and $f(\overline{U_0})\subset U_0\cup\{0\}$.
	\item[(2)] $f:U_0\rightarrow f(U_0)$ is conformal, and $(\partial U_0\sm \left\{0\right\})\cap P(f)=\emptyset$.
	\item[(3)] $\{f^n|_{U_0}\}$ converges locally and uniformly to $0$ as $n\rightarrow\infty$.
	\item[(4)] For any $z\in U$, there exists an integer $k\geq 1$ such that $f^{k}(z)\in U_0$.
\end{itemize}
Denote by $\langle f\rangle$ the grand orbit of $f$. The quotient $U_0/\langle f\rangle$ is conformally isomorphic to the infinite cylinder $\mathbb{C}/\mathbb Z$, which is called the \textbf{attracting cylinder}. Let $\pi$ be the natural projection from $U_0$ to this cylinder. We say that $U_0$ is \textbf{regular} if the arc $\pi(\partial U_0\setminus\{0\})$ lands at the two punctures of the attracting cylinder. Note that every attracting petal contains a regular attracting petal \cite[Proposition~2.15]{CT}. We choose $U_0$ to be regular throughout this paper.

Let $y$ be a parabolic fixed point of a rational map $R$ with multiplier $e^{2\pi i p/q}$ and multiplicity $kq+1$. A union of the corresponding $kq$ attracting petals is called an \textbf{attracting flower}. Since $f$ is a simple parabolic map, the chosen regular attracting petal $U_0$ is an attracting flower for $f$. 
Set $V_0:=h(U_0)$. Since $h$ conjugates $f$ to $g$ on $U$, the domain $V_0$ is a regular attracting petal (and hence an attracting flower) for $g$ at $0$. Note that both $U_0$ and $V_0$ are Jordan domains, and hence by the Carathéodory theorem, $h:U_0\to V_0$ extends to a homeomorphism between their boundaries. By the conjugacy relation and the normalization that $0$ is the parabolic fixed point for both maps, we have $h(0)=0$.

For each $n \ge 1$, let $U_n$ be the component of $f^{-n}(U_0)$ containing $U_0$. There exists an integer $N \ge 1$ such that $U_N$ contains all critical points in $U$. Define $\mathcal{P}_n$ to be the collection of all components of $\widehat{\mathbb{C}} \setminus \overline{U_{N+n}}$, called the \textbf{puzzle pieces of depth $n$}. Setting $Z_n = f^{-n}(0) \cap \partial U_n$, it follows that $Z_n \subset Z_{n+1}$ for $n \ge 0$, and $Z_n = f^{-1}(Z_{n-1}) \cap \partial U_n$ for $n \ge N$. By taking $N$ large enough, we may assume that each puzzle piece contains at most one critical point and at most one non-singleton periodic Julia component.

Let $E_{p}$ denote the Julia component of $f$ containing the parabolic fixed point $0$. The puzzle pieces for the simple parabolic map $f$ have the following properties. The proof is similar to that in \cite[Lemma 3.1]{GGP}, and we omit the details.

\vspace{0.3em}
\noindent(P1) Fix $P_n \in \mathcal{P}_n$ for $n \ge 0$. Then
	\begin{enumerate}
		\item[(a)] $P_n$ is a Jordan domain. 
		\item[(b)] $\partial P_n \cap (\widehat{\mathbb{C}} \setminus U) \subset Z_{N+n}$.
		\item[(c)] For any $P_{n+1} \in \mathcal{P}_{n+1}$, if $P_{n+1} \cap P_n \neq \emptyset$, then $P_{n+1} \subset P_n$ and $\partial P_{n+1} \cap \partial P_n \subset Z_{N+n}$. Conversely, any $z \in Z_{N+n} \cap \partial P_n$ belongs to the closure of a unique puzzle piece in $\mathcal{P}_{n+1}$.
	\end{enumerate}

\vspace{0.3em}
\noindent(P2) For any $n \ge 0$, $\bigcup_{P_n \in \mathcal{P}_n} \overline{P_n} \supset \widehat{\mathbb{C}} \setminus U$ and $\bigcap_{n \ge 0} \bigcup_{P_n \in \mathcal{P}_n} \overline{P_n} = \widehat{\mathbb{C}} \setminus U$.

\vspace{0.3em}
\noindent(P3) For each $E \in \mathcal{E}_f$ and $n \ge 0$, there is a puzzle piece in $\mathcal{P}_n$ whose closure contains $E$.

\vspace{0.3em}
For each $n \ge 1$, set $V_n = h(U_n)$. For each $n \ge 0$, let $\mathcal{Q}_n$ denote the collection of all components of $\widehat{\mathbb{C}} \setminus \overline{V_{N+n}}$. This is the puzzle of depth $n$ for the simple parabolic map $g$. 

Since $U_n$ and $V_n$ are finitely-connected domains bounded by finitely many Jordan curves, the Carathéodory theorem implies that $h:U_n\to V_n$ extends to a homeomorphism between their boundaries. Therefore,

\vspace{0.3em}
\noindent(P4) $h$ is well-defined on $\bigcup_{i\ge 0}f^{-i}(0)$ and satisfies $h(f(x))=g(h(x))$ for every $x\in\bigcup_{i\ge 0}f^{-i}(0)$.

\vspace{0.3em}
\noindent(P5) $h$ induces a bijection $\xi_{n}:\mathcal{P}_{n}\rightarrow\mathcal{Q}_{n}$ such that $\xi_{n}(P_{n}):=\text{Int}(h(\partial P_{n}))$.

\vspace{0.3em}
Since $f$ is a simple parabolic map, for every $n\ge 0$, the boundaries of two distinct puzzle pieces $P_{n},P_{n}^{\prime}\in\mathcal{P}_{n}$ intersect if and only if $E_{p}=\{0\}$, and there exist $k\ge 1$ and a point $z\in f^{-k}(0)$ with $\deg_{z}(f)>1$ such that $z\in\partial P_{n}\cap\partial P_{n}^{\prime}$. In this case, exactly $\deg_{z}(f)$ puzzle pieces in $\mathcal{P}_{n}$ have their boundaries pinched at $z$. Consequently,

\vspace{0.3em}
\noindent(P6) If $E_{p}$ is non-singleton, the puzzle pieces of depth $n$ ($n\ge 0$) have pairwise disjoint closures.

\vspace{0.3em}
For each $E\in\mathcal{E}_{f}$ and $n\ge 0$, let $P_{n}^{(1)}(E),\dots,P_{n}^{(m)}(E)$ be all puzzle pieces in $\mathcal{P}_{n}$ whose closures contain $E$. The discussion above shows that $m>1$ if and only if $E_{p}=\{0\},$ $f^{k}(E)=E_{p}$ for some $k\ge 1$ and $\deg_{E}\sigma_f^{k}=m>1$. We denote
\[
\widetilde{P}_{n}(E):=\bigcup_{j=1}^{m}\overline{P_{n}^{(j)}(E)}.
\]

\noindent(P7) For any $n\ge 0$, the elements of $\widetilde{\mathcal{P}}_{n}=\{\widetilde{P}_{n}(E) \mid E\in\mathcal{E}_{f}\}$ are pairwise disjoint, and for each $E\in\mathcal{E}_{f}$, we have $\bigcap_{n\ge 0}\widetilde{P}_{n}(E)=\widehat{E}$.

\vspace{0.3em}
One can similarly define $\widetilde{Q}_{n}(E_{g})$ for each $E_{g}\in\mathcal{E}_{g}$ and the corresponding set $\widetilde{\mathcal{Q}}_{n}$ for $g$. 
Thus $\xi_{n}:\mathcal{P}_{n}\rightarrow\mathcal{Q}_{n}$ induces a bijection $\widetilde{\xi}_{n}:\widetilde{\mathcal{P}}_{n}\rightarrow\widetilde{\mathcal{Q}}_{n}$. Specifically, for any $\widetilde{P}_{n}=\bigcup_{j=1}^{m}\overline{P_{n}^{(j)}}\in\widetilde{\mathcal{P}}_{n}$ with $P_{n}^{(j)}\in\mathcal{P}_{n}$, we define
\[
\widetilde{\xi}_{n}(\widetilde{P}_{n})=\bigcup_{j=1}^{m}\overline{\xi_{n}(P_{n}^{(j)})}.
\]
Note that if $\widetilde{P}_{n+1}\subset\widetilde{P}_{n},$ then $\widetilde{\xi}_{n+1}(\widetilde{P}_{n+1})\subset\widetilde{\xi}_{n}(\widetilde{P}_{n})$.

For $E\in\mathcal{E}_{f}$, set
\[
\xi(E):=\partial\Big(\bigcap_{n\ge 0}\widetilde{\xi}_{n}\big(\widetilde{P}_{n}(E)\big)\Big).
\]
By \cite[Claim~4.3]{GGP}, $\xi$ is a bijection from $\mathcal{E}_{f}$ onto $\mathcal{E}_{g}$, satisfying that $\xi(\sigma_{f}(E))=\sigma_{g}(\xi(E))$ and $\deg_{E}\sigma_{f}=\deg_{\xi(E)}\sigma_{g}$ for all $E\in\mathcal{E}_{f}$.

\section{Local quasiconformal conjugacy around a non-singleton periodic Julia component}
Let $f$ be a simple parabolic map and $E_p$ be the Julia component of $f$ containing the parabolic fixed point $0$.
In this section, we divide the construction of the local quasiconformal conjugacy into two cases according to whether $E_p$ is a singleton or not. In both cases, let $E$ be a non-singleton periodic Julia component of $f$. Replacing $f$ and $g$ by suitable iterates along the corresponding periodic cycles, we may assume that both $E$ and $\xi(E)$ are fixed.

\subsection{The case when $E_p$ is non-singleton}
In this subsection, we first define thickened puzzle pieces $\widehat{P}_0(E)$ and $\widehat{Q}_0(\xi(E))$ containing $P_0(E)$ and $Q_0(\xi(E))$, respectively. On these thickened neighborhoods, we construct a local conjugacy $\phi_E:\widehat{P}_0(E)\to \widehat{Q}_0(\xi(E))$. We then prove that this map is quasiconformal and satisfies the required properties (see Proposition~\ref{prop:local_conjugacy}).

\subsubsection{Thickened puzzles of depth 0}
To obtain the local quasiconformal conjugacy surrounding the parabolic fixed point, we take advantage of the following extension lemma from Cui and Tan.
\begin{lemma}\label{Cui-Tan}{\rm(\cite[Lemma 3.4]{CT})}
Let $(R_1,x)$ and $(R_2,y)$ be parabolic fixed points. Let $\phi: \mathcal{V}(R_1) \to \mathcal{V}(R_2)$ be a $K$-quasiconformal conjugacy between their regular attracting flowers. Then for any $\varepsilon > 0$, there exist a neighborhood $W_0$ of $x$ with $\mathcal{V}(R_1) \subset W_0$ and a $(K+\varepsilon)$-quasiconformal map $\phi_0$ on $W_0 \cup R_1(W_0)$ such that $\phi_0 = \phi$ on $\mathcal{V}(R_1)$ and $\phi_0 \circ R_1 = R_2 \circ \phi_0$ on $W_0$.
\end{lemma}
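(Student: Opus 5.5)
The statement is the Cui--Tan extension lemma, imported from \cite[Lemma 3.4]{CT}. The plan is to reproduce its proof via Fatou coordinates and the structure of the repelling directions near a parabolic point.

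First normalize. After conjugating by M\"obius maps, take $x=y=0$. Write $R_1(z)=z(1+az^{n}+\dots)$, and treat the multiplier $e^{2\pi i p/q}$ by passing to $R_1^q$, with the same multiplicity $kq+1$ for both maps. This multiplicity must coincide, because $\phi$ conjugates the flowers: the number of petals is a topological invariant of the flower dynamics. Near $0$ there is then a punctured neighborhood covered by the attracting flower $\mathcal V(R_1)$ together with $kq$ repelling petals $\mathcal{R}_j$, alternating. Each repelling petal has a repelling Fatou coordinate $\Psi_j$ conjugating $R_1$ to $w\mapsto w+1$, defined on a region like a left half-plane, and similarly $\Psi'_j$ for $R_2$.

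Second, transport $\phi$ to the repelling side. Regularity of the flower means that each boundary arc of $\mathcal V(R_1)$ projects to an arc joining the two ends of the attracting cylinder. Consequently, in the repelling Fatou coordinate, the two boundary arcs of $\mathcal V(R_1)$ adjacent to $\mathcal R_j$ become curves going to $\pm i\infty$ and invariant under translation by $1$, up to bounded error. On these arcs $\phi$ is already defined, and it commutes with the dynamics. It therefore induces a map on the boundaries of a fundamental strip, and this map commutes with $w\mapsto w+1$.

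In the repelling cylinder $\mathcal R_j/\langle R_1\rangle\cong\mathbb C/\mathbb Z$, the complement of the projected flower is an annulus-type region. Its boundary values come from the quasiconformal map $\phi$. Near the punctures $\pm i\infty$, the transition map between attracting and repelling coordinates (the horn map) is asymptotically a translation. So the boundary correspondence is quasisymmetric with constant close to that of $\phi$ on a large fundamental domain, after shrinking the neighbourhood toward $0$; shrinking corresponds to pushing far into the cylinder ends.

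Third, extend and lift. Extend the boundary map quasiconformally across the complementary region of a fundamental domain, for instance via Beurling--Ahlfors in strip coordinates, or by first extending in a collar where $\phi$ is defined and then interpolating. Then lift to $\mathcal R_j$ by the relation $\phi_0=\Psi_j'^{-1}\circ\Psi_j\circ\ldots$, so that $\phi_0\circ R_1=R_2\circ\phi_0$. Gluing over $j$ with $\phi$ on $\mathcal V(R_1)$ gives $\phi_0$ on a neighbourhood $W_0$. The extension to $R_1(W_0)$ is defined by $R_2\circ\phi_0\circ R_1^{-1}$. Removability of analytic arcs gives quasiconformality on the union.

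The main obstacle is the dilatation control: achieving $K+\varepsilon$ rather than some uncontrolled constant. The plan is to exploit the fact that far into the cylinder ends the gluing maps are $\varepsilon$-close to translations. There one can take the extension to be $\phi$ itself, modified by a near-conformal correction in the interpolation region, and choose $W_0$ small enough that only this high region is used.
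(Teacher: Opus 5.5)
The paper does not prove this lemma; it quotes it from Cui--Tan, so your sketch has to stand on its own. The architecture is sound and can be made to give \emph{some} quasiconformal extension: repelling Fatou coordinates, the observation that shrinking $W_0$ pushes the constrained region into the two ends of each repelling cylinder, extension across the middle, and lifting. That weaker conclusion is all the paper later uses, and there the flower is the single petal $U_0$.

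But the stated bound $K+\varepsilon$ is not obtained. A Beurling--Ahlfors extension of the boundary correspondence has dilatation governed by its quasisymmetry constant, not by $K$. There is no sense in which that constant is ``close to that of $\phi$''.

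Your fallback, using $\phi$ itself with near-conformal corrections, works only when both ends of every repelling cylinder are glued to the same attracting-cylinder map $\hat\phi$. This happens exactly when the petals form a single $R_1$-cycle. In that case only the horn maps $\zeta\mapsto\zeta+c^{\pm}+O(e^{-2\pi|\operatorname{Im}\zeta|})$ need interpolating, at cost $1+O(1/Y)$.

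If there are $k\ge 2$ cycles of petals (already for multiplier $1$ and multiplicity $\ge 3$), the picture changes. The upper end of a repelling cylinder is glued to one attracting cylinder and the lower end to the next, and they carry unrelated $K$-qc maps $\hat\phi_i$ and $\hat\phi_{i+1}$. A $K$-qc map commuting with $\zeta\mapsto\zeta+1$ need not be close to a translation near an end; take $\zeta\mapsto\operatorname{Re}\zeta+iK\operatorname{Im}\zeta$. So there is nothing near-conformal to interpolate, and your plan gives no control of the dilatation in the middle band.

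The missing step is to straighten first.
\begin{itemize}
\item Extend $\mu_\phi$ to an $R_1$-invariant Beltrami coefficient $\mu$ on $\mathcal V(R_1)\cup W$ with $\|\mu\|_\infty\le\|\mu_\phi\|_\infty$. On each repelling cylinder, take the coefficient induced by $\phi$ near the ends and $0$ in between.
\item Integrate $\mu$ to a $K$-qc map $\Phi$. Then $S=\Phi\circ R_1\circ\Phi^{-1}$ is holomorphic on $\Phi(\mathcal V(R_1)\cup W)$ with regular flower $\Phi(\mathcal V(R_1))$, and $\phi\circ\Phi^{-1}$ is a \emph{conformal} conjugacy of flowers.
\item Each attracting-cylinder map is now a translation. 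So the end data on every repelling cylinder are conformal germs $\zeta\mapsto\zeta+\tau^{\pm}+O(e^{-2\pi|\operatorname{Im}\zeta|})$, with $\tau^\pm$ fixed once $S$ is fixed.
\item Shrinking the neighbourhood of $\Phi(x)$ lets you interpolate these germs over a band of height $Y$ at cost $1+O(1/Y)$. This gives a $(1+\varepsilon)$-qc extension $\psi$ of $\phi\circ\Phi^{-1}$, and $\phi_0=\psi\circ\Phi$ is then $K(1+\varepsilon)$-qc.
\end{itemize}
Note that $\tau^\pm$ are genuine complex numbers, not classes mod $\mathbb Z$. The lift at each end is pinned by $\phi$, since a Dehn twist in the middle would yield $R_2^n\circ\phi$ instead of $\phi$ on an adjacent petal.

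Two smaller corrections. First, the boundary arcs of the flower are not invariant; $R_1$ maps them into the petals. What $\phi$ actually induces is a map on the projection of $\mathcal V(R_1)\cap W_0$ to each repelling cylinder, i.e.\ on neighbourhoods of its ends. Second, passing to $R_1^q$ loses $R_1$-equivariance unless you build the extension on one sector per cycle and transport it by $R_1$. The petal correspondence must also respect the cyclic order at $x$, which is implicit in the hypothesis.
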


By Lemma~\ref{Cui-Tan}, there exist a neighborhood $W_0$ of $0$ and a quasiconformal map $\phi_0$ on $W_0\cup f(W_0)$ such that $\phi_0=h$ on $U_0\cap W_0$, which is a Jordan domain, and $\phi_0\circ f=g\circ\phi_0$ on $W_0$.
After shrinking $W_0$ if necessary, and still denoting it by $W_0$, we may assume that $W_0$ is a domain containing $0$ with a smooth boundary, and satisfies
\begin{equation}\label{eq:W0_critical}
	W_0 \cap \bigcup_{i=0}^{N+1} f^i(C(f)) = \emptyset, \quad \partial W_0 \cap P(f) = \emptyset,
\end{equation}
and
\begin{equation}\label{eq:phi0_critical}
	\phi_0(W_0) \cap \bigcup_{i=0}^{N+1} g^i(C(g)) = \emptyset, \quad \partial \phi_0(W_0) \cap P(g) = \emptyset.
\end{equation}

For each $0\leq j\leq N+1$, let  $W_j(0)=\operatorname{Comp}_0f^{-j}(W_0)$. By shrinking $W_0$ again, we can assume that $\phi_0\circ f=g\circ\phi_0$ on $\bigcup_{j=0}^{N+1}W_j(0)$. As $\phi_0=h$ on $\partial U_0\cap W_0$, it follows that 
\[
\tag{$*$}\label{eq:phi0}
\phi_0=h \quad \text{on} \quad \partial U_j\cap W_j(0)\quad \text{for each $0\leq j\leq N+1$.}
\]

Let $W_0^*\Subset W_0$ be a Jordan domain containing $0$ with smooth boundary disjoint from $P(f)$.

\vspace{0.2em}
The set $\partial P_0(E)\cap f^{-N}(0)$ is finite and nonempty. We write it as the disjoint union of two subsets: $\{x_1,\ldots,x_l\}$, whose points lie on $\partial P_0(E)\cap\partial P_1(E)$, and $\{y_1,\ldots,y_s\}$, whose points lie on $\partial P_0(E)\setminus\partial P_1(E)$. 

\vspace{0.2em}
By \eqref{eq:W0_critical}, the inverse branches of $f^{N+1}$ are conformal on $W_{0}$. We define
\[
W(a)=\mathrm{Comp}_{a}f^{-(N+1)}(W_{0}) \quad\text{and}\quad W^{*}(a)=\mathrm{Comp}_{a}f^{-(N+1)}(W_{0}^{*}),
\]
where $a\in\{x_{1},\dots,x_{l},y_{1},\dots,y_{s}\}$. 
By construction, $W^*(a)\Subset W(a)$. After shrinking $W_0$ further, we may assume that all the neighborhoods $W(a)$ are pairwise disjoint.
For the corresponding points on the $g$-plane, set
$W(h(a))=\operatorname{Comp}_{h(a)}g^{-(N+1)}\bigl(\phi_0(W_0)\bigr)$.

Define the thickened puzzle pieces by
\[
\widehat{P}_0(E)=P_0(E)\cup\bigcup_{i=1}^l W(x_i)\cup\bigcup_{j=1}^s W(y_j),
\]
and
\[
\widehat{Q}_0(\xi(E))=Q_0(\xi(E))\cup\bigcup_{i=1}^l W(h(x_i))\cup\bigcup_{j=1}^s W(h(y_j)),
\]
as shown in Figure~\ref{fig:1-1}.

\begin{figure}[htbp]
	\centering
	\includegraphics[width=9cm]{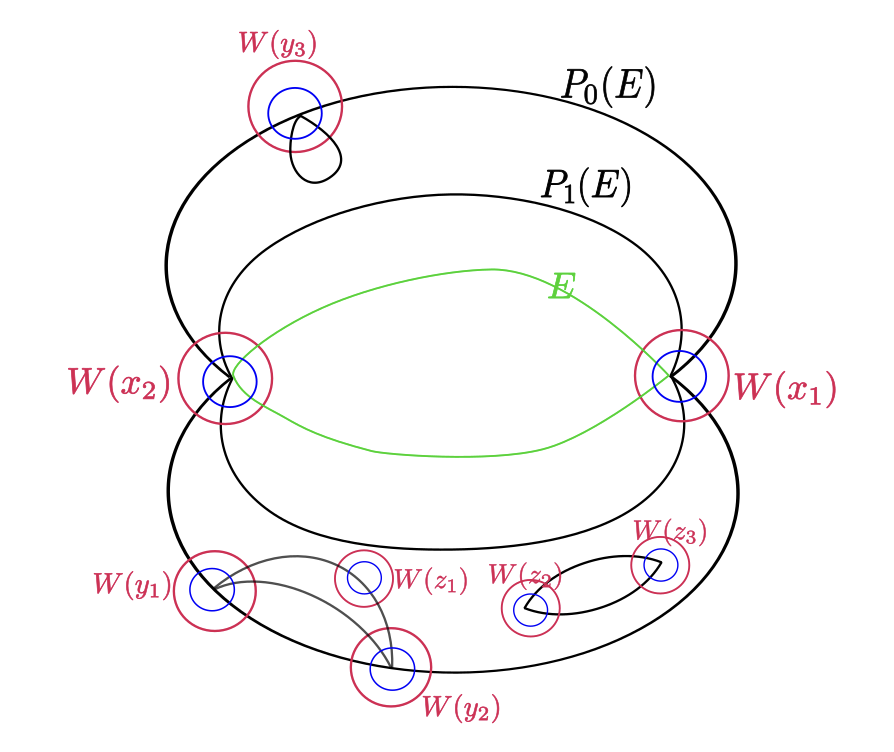}
	\caption{Thickened puzzles of depth 0}
	\label{fig:1-1}
\end{figure}

\subsubsection{The local quasiconformal map $\phi_E$}
For each $n\ge 1$, let
\[
\mathcal B_n(E)=\{P_n\in\mathcal P_n\mid P_n\subset P_{n-1}(E)\ \text{and}\ P_n\neq P_n(E)\}.
\]
\begin{proposition}\label{prop:local_conjugacy}
	There exists a quasiconformal map $\phi_E:\widehat{P}_0(E)\to \widehat{Q}_0(\xi(E))$ such that $\phi_E$ is conformal on
	$\operatorname{Int}(\widehat E)$, $g\circ\phi_E=\phi_E\circ f$ on $P_1(E)$, and
	\[
	\phi_E=h
	\quad \text{on} \quad
	\widehat{P}_0(E)\setminus
	\big(
	\widehat E\cup
	\bigcup_{n\ge 1}\bigcup_{P_n\in\mathcal B_n(E)} P_n
	\big).
	\]
\end{proposition}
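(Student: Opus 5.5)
We build $\phi_E$ by a pullback (Thurston-type) scheme that starts from a quasiconformal map equal to $h$ off the relevant regions, and then use the quasiconformal criterion of \cite{CWY} to control the dilatation of the limit. Throughout we use that $E$ is a fixed quasicircle-or-Jordan curve bounding an eventually superattracting (hence here superattracting, fixed) Fatou domain $\operatorname{Int}(\widehat E)$ containing at most one postcritical point, and that $\xi(E)$ has the same properties with $\deg_E\sigma_f=\deg_{\xi(E)}\sigma_g$.

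\textbf{Step 1 (initial map).} Define $\Phi_0$ on $\widehat P_0(E)$ as follows. On $\widehat P_0(E)\cap U$ set $\Phi_0=h$. On each $W(a)$ set $\Phi_0=g^{-(N+1)}\circ\phi_0\circ f^{N+1}$, using the branch sending $a$ to $h(a)$. By \eqref{eq:phi0} and (P4) this agrees with $h$ on $\partial U_{N+1}\cap W(a)$, so these pieces glue. On $\operatorname{Int}(\widehat E)$ take the Böttcher-coordinate conjugacy between the superattracting domains of equal local degree; this is conformal and is normalized so that its boundary values match the combinatorics of $\xi$. On the annular region $P_0(E)\setminus(\overline{P_1(E)}\cup\bigcup\mathcal B_1(E))$ interpolate quasiconformally. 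This region is bounded by finitely many Jordan curves which, near the points $x_i,y_j$, are handled by the $W(a)$-pieces. Away from them the curves are smooth, or are quasicircles in the case of $E$, so the interpolation is possible. We choose $\Phi_0$ so that $g\circ\Phi_0=\Phi_0\circ f$ already holds on $\partial P_1(E)$ and on $W(x_i)\cap P_1(E)$.

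\textbf{Step 2 (pullback).} Since $f:P_1(E)\to P_0(E)$ and $g:Q_1(\xi(E))\to Q_0(\xi(E))$ are branched coverings of the same degree, with critical values located at corresponding points, we lift to get $\Phi_{k+1}$ with $g\circ\Phi_{k+1}=\Phi_k\circ f$ on $P_1(E)$. Outside $P_1(E)$ we keep $\Phi_{k+1}=\Phi_k$. The pieces in $\mathcal B_n(E)$ are pulled back accordingly: on $P_n\in\mathcal B_n(E)$ we use $f^{n-1}:P_n\to P_1'\subset P_0(E)$ with $P_1'\in\mathcal B_1(E)$. Because each puzzle piece contains at most one critical point and the maps are homotopic relative to the postcritical points in question, these lifts exist. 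Outside $\widehat E\cup\bigcup\mathcal B_n$ every $\Phi_k$ equals $h$, and each $\Phi_k$ is conformal on $\operatorname{Int}(\widehat E)$. The dilatation of $\Phi_k$ is bounded by $K(\Phi_0)$, since pulling back by holomorphic maps does not increase dilatation. Taking a subsequential limit gives $\phi_E$ with all the stated properties. The sequence stabilizes off $\widehat E\cup\bigcup\mathcal B_n$, and in fact on every $P_n\setminus \overline{P_{n+1}(E)}$ it stabilizes after $n$ steps.

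\textbf{Main obstacle.} The limit must be uniformly quasiconformal up to $E$ and at the parabolic point and its preimages on $\partial P_n$. There the boundaries of nested pieces touch, so annuli do not separate and the dilatation bound cannot be obtained from moduli. The thickening $W(a)$ is designed for exactly this: near $0$ the map is the Cui--Tan extension $\phi_0$ (Lemma~\ref{Cui-Tan}), which is genuinely a conjugacy, so the pullbacks leave it unchanged and it keeps bounded dilatation. Accordingly, we first check that $\Phi_k=\Phi_0$ on each $W^*(a)$ for all $k$. Then near $E$ we apply the criterion of \cite{CWY}. This requires showing that $\phi_E$ is a homeomorphism, which we obtain from the nest $\bigcap \widetilde P_n(E)=\widehat E$ given by (P7) and from the shrinking of the pieces in $\mathcal B_n$. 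It also requires that $\phi_E$ be $K$-qc off a set of zero measure, with the exceptional set $E$ itself a quasicircle (or of measure zero) and hence removable.
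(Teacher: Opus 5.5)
\paragraph{The gap.} Your proposal has a genuine gap, and it sits exactly where the content of the proposition lies: quasiconformality of $\phi_E$ across $E$, above all when $0\in E$.

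Your Step~1 prescribes $\Phi_0=h$ on all of $U\cap\widehat P_0(E)$, which accumulates on $E$, and the B\"ottcher conjugacy on $\operatorname{Int}(\widehat E)$. A quasiconformal homeomorphism with these two prescriptions is essentially the map you are asked to construct. So the bound $K(\Phi_k)\le K(\Phi_0)$ proves nothing until $\Phi_0$ is shown to exist. There are further problems with Step~1:
\begin{itemize}
\item The region you interpolate on, $P_0(E)\setminus(\overline{P_1(E)}\cup\bigcup\mathcal B_1(E))$, lies in $\overline U$, where $h$ is already prescribed.
\item $\Phi_0$ is never defined on the pieces of $\mathcal B_n(E)$ for $n\ge 2$, or on $E$.
\item $\Phi_0=h$ conflicts with $\Phi_0=\phi$ on $W(a)\cap U$, since $\phi_0$ is only known to equal $h$ on $U_0\cap W_0$ and on the curves in \eqref{eq:phi0}. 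The paper uses $\phi$ only on the puzzle-piece side of $\partial U_{N+1}$.
\end{itemize}

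Your fallback in the last paragraph is wrong on both counts.
\begin{itemize}
\item $E$ need not be a quasicircle. This is guaranteed only when the orbit of $E$ avoids $0$; when $0\in E$ the curve has a cusp at the parabolic point. That is exactly why the removability argument appears in the paper only in Subsection~3.2.
\item Zero area does not make a set removable, and it is not what the criterion of \cite{CWY} asks for. Condition~(2) needs $\operatorname{mes}(\phi_E(\Omega_2))=0$, i.e.\ zero area of $\xi(E)$ rather than of $E$, \emph{together with} $\underline{\mathcal H}(\phi_E,x)<\infty$ at every $x\in E\setminus\bigcup_{i\ge0}f^{-i}(0)$.
\end{itemize}
These two facts are the substance of the paper's proof. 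Lemma~\ref{lem:zero_area_image_Omega2} uses McMullen's theorem to straighten $(g,\xi(E))$ to the Julia set of a geometrically finite map, which has zero area. Lemma~\ref{lem:finite_lower_dilatation_Omega2} obtains the pointwise bound from bounded degree along ray-cut puzzle pieces (Lemmas~\ref{lem:bounded_degree_ray_cut} and~\ref{lem:parabolic_escape}) combined with Lemma~\ref{lem:haissinsky}. The countable set $E\cap\bigcup_{i\ge0}f^{-i}(0)$, which contains the cusp, is placed in $\Omega_4$. Nothing of this kind appears in your proposal.

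\paragraph{How the pullback could be repaired.} Your pullback idea can close the gap on its own if you stop prescribing $\Phi_0$ inside $P_1(E)$. Define $\Phi_0$ as follows:
\begin{itemize}
\item on $\widehat P_0(E)\setminus P_1(E)$, take the paper's Step~1 map;
\item on $W^*(a)\cap P_1(E)$, for the finitely many points $a\in\partial P_1(E)\cap f^{-(N+1)}(0)$, take $\phi$; here $\phi=h$ on $\partial P_1(E)$, as in \eqref{eq:phiE};
\item on the remaining quasidisk, take any quasiconformal extension, arranged so that $\Phi_0(c)=c'$ and $\Phi_0$ is conformal near $c$.
\end{itemize}
Then every lift is $K(\Phi_0)$-quasiconformal on all of $\widehat P_0(E)$, because the gluing curve $\partial P_1(E)$ is removable. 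The lifts stabilize on each $\widehat P_0(E)\setminus P_n(E)$ after $n$ steps and equal $h$ on the $U$-parts there. They are eventually conformal on compact subsets of $\operatorname{Int}(\widehat E)$, because $f^k\to c$ there. The limit is then quasiconformal across $E$ automatically, with no estimate on $E$ needed. As written, however, your argument rests on an unconstructed $\Phi_0$ and an incorrect removability claim.
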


We first construct a homeomorphism $\phi_E:\widehat{P}_0(E)\to \widehat{Q}_0(\xi(E))$ that is quasiconformal on $\widehat{P}_0(E)\setminus E$ and conformal on $\operatorname{Int}(\widehat{E})$. This construction includes three steps.

\medskip
\noindent\textbf{Step 1. Definition on the annulus $\widehat{P}_0(E) \setminus \overline{P_1(E)}$.}
\vspace{0.2em}

The boundaries of puzzle pieces in $\mathcal B_1(E)$ may contain finitely many points in $Z_{N+1}\setminus Z_N$. Denote them by $\{z_1,\dots,z_m\}$. For each such point, define
\[
W(z_k)=\operatorname{Comp}_{z_k}f^{-(N+1)}(W_0),
\qquad
W^*(z_k)=\operatorname{Comp}_{z_k}f^{-(N+1)}(W_0^*),
\]
and
\[
W(h(z_k))=\operatorname{Comp}_{h(z_k)}g^{-(N+1)}\bigl(\phi_0(W_0)\bigr).
\]
Then $W^*(z_k)\Subset W(z_k)$. For each $a \in \{x_1, \dots, x_l, y_1, \dots, y_s, z_1, \dots, z_m\}$, we define a local quasiconformal map $\phi$ on $W(a)$ by
\[
\phi=\bigl(g^{N+1}|_{W(h(a))}\bigr)^{-1}\circ\phi_0\circ f^{N+1}.
\]

\vspace{0.2em}
The annulus $\widehat{P}_0(E)\setminus \overline{P_1(E)}$ admits the decomposition
\[
\widehat{P}_0(E) \setminus \overline{P_1(E)} = \bigg( \widehat{P}_0(E) \setminus \Big( \overline{P_1(E)} \cup \bigcup_{P_1 \in \mathcal{B}_1(E)} \overline{P_1} \Big) \bigg) \cup \bigcup_{P_1 \in \mathcal{B}_1(E)} \overline{P_1}.
\]
On the first part, which is contained in $U$, we set $\phi_E=h$. 

\vspace{0.2em}
Now fix $P_1\in\mathcal B_1(E)$. Its boundary may contain some points among $\{y_1,\dots,y_s\}$ and $\{z_1,\dots,z_m\}$. Figure~\ref{fig:1-2} provides a magnified view of a special case shown in Figure~\ref{fig:1-1}. Set
\[\mathcal{W}^*(P_1)=\bigcup_{y_j \in \partial P_1} W^*(y_j) \cup \bigcup_{z_k \in \partial P_1} W^*(z_k).\]
On $\overline{P_1}\cap \overline{\mathcal W^*(P_1)}$, define $\phi_E=\phi$. By~\eqref{eq:phi0}, we obtain that 
\[
\tag{$**$}\label{eq:phiE}
\phi_E=\phi=h \quad \text{on} \quad \partial U_{N+1}\cap W(a)=\partial P_1\cap W(a) \quad (\text{where } a=y_j \text{ or } z_k).
\]

\begin{figure}[htbp]
	\centering
	\includegraphics[width=8cm]{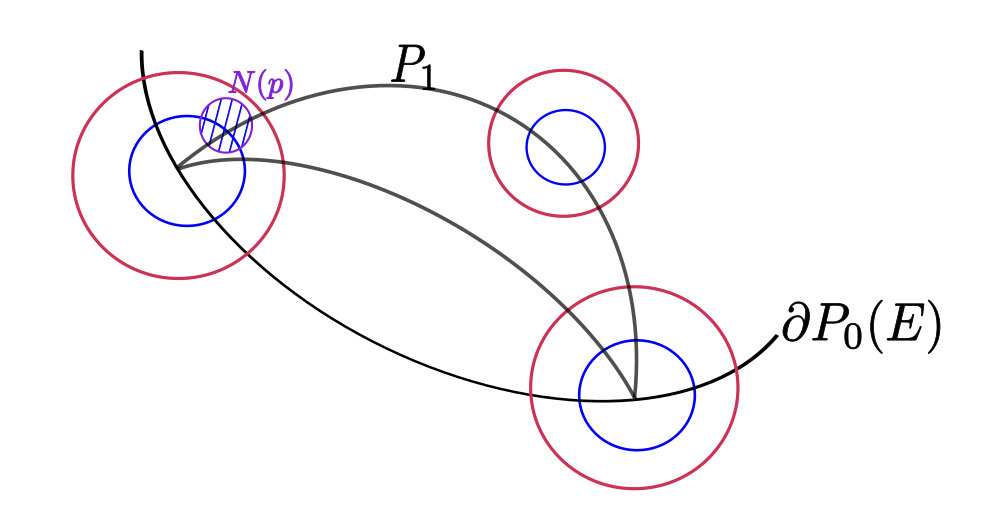}
	\caption{}
	\label{fig:1-2}
\end{figure}

It remains to define $\phi_E$ on $\overline{P_1}\setminus \mathcal W^*(P_1)$. We define the boundary map $\Psi$ as
\[\Psi(\zeta)=
\begin{cases}
	h(\zeta), & \zeta\in \partial\big(P_1\setminus \overline{\mathcal W^*(P_1)}\big)\cap \partial P_1,\\
	\phi(\zeta), & \zeta\in \partial\big(P_1\setminus \overline{\mathcal W^*(P_1)}\big)\setminus \partial P_1.
\end{cases}
\]
Let $\mathcal C$ be the finite set of intersection points between $\partial P_1$ and $\partial W^*(a)$, where $a=y_j$ or $z_k$. For every $p\in\mathcal C$, since $W^*(a)\Subset W(a)$, there exists a neighborhood $N(p)\Subset W(a)$ (see Figure~\ref{fig:1-2}). Then $\phi=h$ on $N(p)\cap\partial P_1$ by~\eqref{eq:phiE}. Thus, $\Psi$ is locally quasisymmetric at each point of $\mathcal C$. Hence $\partial(P_1\setminus\overline{\mathcal W^*(P_1)})$ is a quasicircle and $\Psi$ is quasisymmetric on it. By the Ahlfors--Beurling extension theorem~\cite{Ahlfors}, there exists a quasiconformal map $\phi_E$ on $P_1\setminus\overline{\mathcal W^*(P_1)}$ which extends homeomorphically to its closure and satisfies $\phi_E|_{\partial(P_1\setminus\overline{\mathcal W^*(P_1)})}=\Psi$.

\vspace{0.2em}
Since the two definitions of $\phi_E$ agree on
$\overline{P_1}\cap\partial\mathcal W^*(P_1)$, which consists of finitely many smooth arcs,
their quasiconformal removability (cf.~\cite[Theorem~8.3]{Lehto}) implies that $\phi_E$ is quasiconformal on $P_1$, and $\phi_E|_{\partial P_1}=h$ by~\eqref{eq:phiE} and the definition of $\Psi$.
Carrying out this procedure for every $P_1\in\mathcal B_1(E)$ yields a quasiconformal map $\phi_E$ on the annulus $\widehat{P}_0(E)\setminus\overline{P_1(E)}$.

\medskip
\noindent\textbf{Step 2. Inductive lifting.}
\vspace{0.2em}

We extend $\phi_E$ to $\widehat{P}_0(E)\setminus \widehat{E}$ by lifting. More precisely, suppose that $\phi_E$ has been defined on $\widehat{P}_0(E)\setminus \overline{P_n(E)}$. For every puzzle piece $P_{n+1}\in\mathcal B_{n+1}(E)$, $f^n$ conformally maps it onto a puzzle piece in $\mathcal{B}_1(E)$ because $P_0(E) \setminus\widehat{E}$ contains no critical points. Note that $h\circ f^n=g^n\circ h$ on $\partial P_{n+1}$.
We define $\phi_E$ on $P_{n+1}$ by
\[
\phi_E
=
\bigl(g^n|_{Q_{n+1}}\bigr)^{-1}
\circ \phi_E\circ f^n,
\]
where $Q_{n+1}$ is the puzzle piece whose boundary is $h(\partial P_{n+1})$. Then $\phi_E=h$ on $\partial P_{n+1}$.

Repeating this construction for all $n\geq1$, we obtain a quasiconformal map $\phi_E$ on $\widehat{P}_0(E)\setminus\widehat E$ satisfying $g\circ\phi_E=\phi_E\circ f$ on $P_1(E)\setminus\widehat E$. For every $n\geq1$ and every $P_n\in\mathcal B_n(E)$, we have
$\phi_E|_{\partial P_n}=h|_{\partial P_n}$.
More generally, by construction,
\[
\phi_E=h
\quad\text{on}\quad
\widehat{P}_0(E)\setminus
\big(
\widehat E\cup
\bigcup_{n\geq1}\bigcup_{P_n\in\mathcal B_n(E)}P_n
\big).
\]

\noindent\textbf{Step 3. Extension to $\widehat E$.}
\vspace{0.2em}

Since $\widehat{P}_0(E)\setminus \widehat E$ and $\widehat{Q}_0(\xi(E))\setminus \widehat{\xi(E)}$ are doubly-connected, $\phi_E$ can be extended homeomorphically to the boundary component $E$ (cf. \cite{Lehto}). Now we define $\phi_E$ on $\operatorname{Int}(\widehat E)$. Set $d_0 := \deg_E \sigma_f$. 
Let $c$ and $c'$ be the unique critical points of $f$ in $\operatorname{Int}(\widehat E)$ and of $g$ in $\operatorname{Int}(\widehat{\xi(E)})$, respectively. Choose conformal maps
\[
\alpha:\mathbb D\longrightarrow\operatorname{Int}(\widehat E)
\quad\text{and}\quad
\beta:\mathbb D\longrightarrow\operatorname{Int}(\widehat{\xi(E)})
\]
with $\alpha(0)=c$ and $\beta(0)=c'$, normalized so that
\[
\alpha^{-1}\circ f\circ\alpha(w)=w^{d_0}
\quad\text{and}\quad
\beta^{-1}\circ g\circ\beta(w)=w^{d_0}.
\]
By the Carath\'eodory theorem, $\alpha$ and $\beta$ extend homeomorphically to the closed unit disks. Hence $\phi_E:E\to\xi(E)$ induces an orientation-preserving homeomorphism
\[
\eta:=\beta^{-1}\circ\phi_E\circ\alpha:\mathbb S^1\longrightarrow\mathbb S^1.
\]
Since $g\circ\phi_E=\phi_E\circ f$ on $E$, we have $\eta(w^{d_0})=\eta(w)^{d_0}$ for every $w\in\mathbb S^1$.
It is not difficult to verify that $\eta(z) = \kappa z$ for a $(d_0-1)$-th root of unity.
This yields $\beta^{-1} \circ \phi_E \circ \alpha(w) = \kappa w$ for $w \in \mathbb{S}^1$. So we can define
$\phi_E(z)=\beta(\kappa\alpha^{-1}(z))$ for all $z\in\operatorname{Int}(\widehat E)$.

\vspace{0.3em}
Up to now, we have obtained a homeomorphism $\phi_E:\widehat{P}_0(E)\to\widehat{Q}_0(\xi(E))$ satisfying all the properties in Proposition~\ref{prop:local_conjugacy} except for being quasiconformal on $\widehat{P}_0(E)$. 
\vspace{0.2em}

If $\phi$ is a homeomorphism defined in a neighborhood of $x$, we set
\[
\underline{\mathcal H}(\phi,x)
=
\liminf_{r\to0}
\operatorname{Shape}\bigl(\phi(B(x,r)),\phi(x)\bigr)
=
\liminf_{r\to0}
\frac{\sup_{|y-x|=r}|\phi(y)-\phi(x)|}
{\inf_{|y-x|=r}|\phi(y)-\phi(x)|}.
\]

In order to prove the quasiconformality of $\phi_E$, we invoke the following quasiconformal criterion of Cao--Wang--Yin~\cite{CWY} and a theorem of McMullen~\cite{McM88}.
\begin{proposition}\label{qc criterion}{\rm(\cite[Proposition A.1]{CWY})}
	Let $\phi : \Omega \to \phi(\Omega)$ be an orientation-preserving homeomorphism between domains in $\mathbb{C}$. Let $C \geq 1$ and $M \geq 1$ be constants. Suppose $\Omega = \bigcup_{k=1}^4 \Omega_k$ satisfy the following properties.
	\begin{enumerate}
		\item[(1)]  $\underline{\mathcal{H}}(\phi,x) \leq C$ for any $x \in \Omega_1$.
		\item[(2)]  $\operatorname{mes}(\phi(\Omega_2)) = 0$ and $\underline{\mathcal{H}}(\phi,x) < \infty$ for any $x \in \Omega_2$.
		\item[(3)]  There is a Markov family $\mathcal{F}$ of Jordan domains in $\mathbb{C}$ such that\footnote{Here the Markov property of $\mathcal{F}$ means that $A_1 \cap A_2 = \emptyset$, or $A_1 \subset A_2$, or $A_2 \subset A_1$ for any $A_1,A_2 \in \mathcal{F}$, and there are at most finitely many $A \in \mathcal{F}$ containing $A_0$ for any $A_0 \in \mathcal{F}$.}
		\begin{itemize}
			\item for any $x \in \Omega_3$, there is a sequence $A_1 \supsetneq A_2 \supsetneq \cdots$ in $\mathcal{F}$ such that $\bigcap_{n=1}^\infty A_n = \{x\}$; and
			\item $\mathcal{F}$ and $\{\phi(A) \mid A \in \mathcal{F}\}$ have $M$-uniform shape.
		\end{itemize}
		\item[(4)]  $\operatorname{length}(\phi(\Omega_4)) = 0$.
	\end{enumerate}
	Then $\phi$ is $K$-quasiconformal, where $K = K(C,M) \geq 1$.
\end{proposition}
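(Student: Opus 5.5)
The approach is to verify the analytic definition of quasiconformality: $\phi\in W^{1,2}_{\mathrm{loc}}(\Omega)$ and $|D\phi|^2\le K\,J_\phi$ almost everywhere, with $K=K(C,M)$. The argument follows the covering scheme of Heinonen--Koskela and Kallunki--Koskela for the metric ($\liminf$) definition. Its basic unit is a \emph{good set} $B$ around a point $x$, meaning one whose image satisfies
\[
\operatorname{diam}\phi(B)^2\le C'\operatorname{mes}\phi(B),\qquad C'=C'(C,M).
\]
For $x\in\Omega_1$, the hypothesis $\underline{\mathcal H}(\phi,x)\le C$ gives arbitrarily small radii $r$ for which $B(x,r)$ is good with constant $\asymp C^2$. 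For $x\in\Omega_3$, the nested Markov domains $A_n\ni x$ are good, because both $A_n$ and $\phi(A_n)$ have $M$-uniform shape. Each $A_n$ is also comparable to a round disk around $x$, with constants depending only on $M$, so it can stand in for a ball.

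The first step is to show that $\phi$ is absolutely continuous on almost every horizontal and vertical segment, with a quantitative bound on the length of $\phi$ of such a segment. Fix a rectangle $R\Subset\Omega$ and $\delta>0$. Cover $R\cap(\Omega_1\cup\Omega_3)$ by good sets of diameter $<\delta$.
\begin{itemize}
\item On $\Omega_1$, apply the $5r$-covering lemma to extract disjoint balls.
\item On $\Omega_3$, the Markov property replaces Vitali: the maximal elements of the chosen subfamily of $\mathcal F$ are automatically pairwise disjoint.
\end{itemize}
The remaining points lie in $\Omega_2\cup\Omega_4$.
\begin{itemize}
\item $\Omega_4$ needs no balls. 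Since $\operatorname{length}(\phi(\Omega_4))=0$, it contributes nothing to the length of $\phi(\gamma)$ for any curve $\gamma$.
\item At $x\in\Omega_2$, only $\underline{\mathcal H}(\phi,x)<\infty$ is available, so the constant is non-uniform. Stratify $\Omega_2=\bigcup_k\Omega_2^{(k)}$ by the condition $\underline{\mathcal H}\le k$. On each stratum, $\operatorname{mes}\phi(\Omega_2^{(k)})=0$ lets us choose covering balls whose images have total area $<\varepsilon_k$, with $\sum_k k^2\varepsilon_k$ arbitrarily small. Hence the $\Omega_2$-part also contributes negligibly.
\end{itemize}
Integrating over parallel segments and using Cauchy--Schwarz, we combine $\sum\operatorname{diam}\phi(B)\cdot\operatorname{diam}B$ with $\sum\operatorname{diam}\phi(B)^2\lesssim\operatorname{mes}\phi(R)$ over the disjoint good sets. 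Letting $\delta\to0$, this yields
\[
\int\operatorname{length}\bigl(\phi(\text{segment})\bigr)^2\lesssim\operatorname{mes}\phi(R')
\]
on a slightly larger rectangle $R'$. This gives ACL and $D\phi\in L^2_{\mathrm{loc}}$, hence $\phi\in W^{1,2}_{\mathrm{loc}}$.

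The second step is the pointwise distortion bound at almost every point of differentiability. Since $\Omega_2\cup\Omega_4$ has image of measure zero and Sobolev homeomorphisms satisfy Lusin's condition (N) once $W^{1,2}$ is known, we get $J_\phi=0$ a.e. on that set. Then $D\phi=0$ a.e. there, by the standard fact that $D\phi=0$ a.e. on $\{J_\phi=0\}$ for a homeomorphism in $W^{1,2}$ with the integrability just obtained. At a differentiability point in $\Omega_1\cup\Omega_3$ with $J_\phi>0$, the linear map $D\phi(x)$ sends small disks (or the $M$-round Markov domains) to sets whose shape is controlled by $C$ or $M$. Therefore $|D\phi|^2\le K J_\phi$ there.

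I expect the main obstacle to be the first step, specifically handling the non-uniform set $\Omega_2$ and merging the two different covering mechanisms (balls versus Markov pieces) into one length--area estimate. The stratification argument has to be arranged so that the bounds depend only on $C$ and $M$ and never on the individual $k$.
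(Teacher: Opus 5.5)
The paper does not prove this statement; it quotes it from \cite[Proposition A.1]{CWY}. So your argument has to stand on its own. Your architecture is the standard one for criteria of this type: the analytic definition, a covering/length--area argument for ACL and $L^2$ derivatives, and exceptional sets that contribute nothing to image lengths on lines. However, two steps fail as written.

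\emph{First gap: the covering on $\Omega_1$ (and on the strata $\Omega_2^{(k)}$).} To bound $\operatorname{length}\phi(\gamma)$ you need sets whose images \emph{cover} $\phi(\gamma\cap\Omega_1)$.
\begin{itemize}
\item The $5r$-lemma gives disjoint good balls $B_i$ with $\bigcup 5B_i$ covering. But $\underline{\mathcal H}(\phi,x)\le C$ controls $\phi(B(x,r))$ only at the selected radii, so nothing bounds $\operatorname{diam}\phi(5B_i)$.
\item Settling for an a.e.\ cover does not help either. Showing that a null subset of a line has image of zero length is exactly what ACL requires.
\end{itemize}
This is the crux of $\liminf$-type criteria, and the remedy is the Besicovitch covering theorem. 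The good balls centred at points of $\Omega_1\cap R$ admit a subcover of multiplicity at most an absolute $N$. By injectivity, $\sum_i\operatorname{mes}\phi(B_i)\le N\operatorname{mes}\phi(R')$, and adding the disjoint maximal Markov pieces raises the multiplicity only to $N+1$.

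Even then, Cauchy--Schwarz gives only an $L^1$-type bound $\int\operatorname{length}\phi(\gamma_y\cap R)\,dy\lesssim(\operatorname{mes}R')^{1/2}(\operatorname{mes}\phi(R'))^{1/2}$. Your displayed $L^2$ inequality is not dimensionally consistent. Finite length on a.e.\ line is also not absolute continuity. You must localize: doing the covering inside an arbitrary open $O\supset A$ gives
$\nu(A):=\int\mathcal H^1(\phi(\gamma_y\cap A))\,dy\lesssim(\operatorname{mes}O)^{1/2}(\operatorname{mes}\phi(O))^{1/2}$.
Hence $\nu$ is absolutely continuous with density $\lesssim\sqrt{d(\operatorname{mes}\circ\phi)/dx}$, and it is this that yields ACL and $D\phi\in L^2_{\mathrm{loc}}$.

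\emph{Second gap: the ``standard fact'' in Step 2 is false.} Let $K\subset[0,1]$ be a Cantor set of positive length, $g(t)=\int_0^t\operatorname{dist}(s,K)\,ds$, and $\phi(t,y)=(g(t),y)$. This is a $C^1$ homeomorphism with $J_\phi=0$ but $D\phi=\operatorname{diag}(0,1)$ on $K\times\mathbb R$. It even maps this positive-area set onto the null set $g(K)\times\mathbb R$. So neither $W^{1,2}$ regularity nor a null image gives $D\phi=0$; you must use the hypotheses.
\begin{itemize}
\item On $\Omega_2$: at a differentiability point, a singular nonzero $D\phi(x)$ sends small disks to $o(r)$-thin sets, forcing $\underline{\mathcal H}(\phi,x)=\infty$. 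So $D\phi(x)$ is $0$ or invertible. Since $\int_A J_\phi\le\operatorname{mes}\phi(A)$ for any a.e.\ differentiable homeomorphism (no Lusin (N) needed), $J_\phi=0$ a.e.\ on $\Omega_2$, hence $D\phi=0$ a.e.\ there.
\item On $\Omega_1\cup\Omega_3$ at points with $J_\phi=0$, which your Step 2 omits: the same observation applies, using the $M$-shape bound on $\Omega_3$.
\item On $\Omega_4$: on a.e.\ line $\phi$ is absolutely continuous and injective, so $\int_{\gamma_y\cap\Omega_4}|\partial_x\phi|\,dx=\operatorname{length}\phi(\gamma_y\cap\Omega_4)=0$, and likewise vertically.
\end{itemize}
Alternatively, the localized estimate above already gives $|\partial_x\phi|^2+|\partial_y\phi|^2\lesssim d(\operatorname{mes}\circ\phi)/dx=J_\phi$ a.e., which is the distortion inequality at once.

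With these repairs the rest works. On $\Omega_3$ you do not need the pieces to be round about $x$. Bounded shape of $A_n$ and $\phi(A_n)$ about any interior points already gives $\|D\phi(x)\|\le 2M^2\min_{|v|=1}|D\phi(x)v|$.
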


\begin{theoremM}
Let $E_R$ be a non-singleton fixed Julia component of a rational map $R$ of degree $d\geq 2$. Then there exist a rational map $\widetilde{R}$ of degree at least $2$ and a quasiconformal map $\varphi: \widehat{\mathbb{C}} \to \widehat{\mathbb{C}}$ such that $\varphi(E_R) = J(\widetilde{R})$ and $\varphi \circ R = \widetilde{R} \circ \varphi$ on $E_R$.
\end{theoremM}

Set $\Omega := \widehat{P}_0(E)$ and decompose it as
\[
\Omega_1 = \widehat{P}_0(E) \setminus E, \quad \Omega_2 = E \setminus \bigcup_{i\ge 0}f^{-i}(0), \quad \Omega_3 = \emptyset, \quad \Omega_4 = E \cap \bigcup_{i\ge 0}f^{-i}(0).
\]

\begin{lemma}\label{lem:zero_area_image_Omega2}
	$\operatorname{mes}(\phi_E(\Omega_2))=0$.
\end{lemma}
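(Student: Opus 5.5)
Since $\phi_E$ maps $E$ onto $\xi(E)$, it suffices to show that $\operatorname{mes}(\xi(E))=0$; then $\phi_E(\Omega_2)\subset\xi(E)$ has zero area. The plan is to deduce this from McMullen's theorem applied to $g$ and the fixed component $\xi(E)$, together with the fact that $g$ restricted to $\xi(E)$ is modelled on a power map.

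First, $\xi(E)$ is a non-singleton fixed Julia component of $g$, since $\xi$ commutes with the induced maps $\sigma_f,\sigma_g$ and preserves degrees. By the definition of a simple parabolic map, $\xi(E)$ is a Jordan curve bounding an eventually superattracting Fatou domain $\operatorname{Int}(\widehat{\xi(E)})$. By McMullen's theorem there are a rational map $\widetilde g$ and a global quasiconformal map $\varphi$ with $\varphi(\xi(E))=J(\widetilde g)$ and $\varphi\circ g=\widetilde g\circ\varphi$ on $\xi(E)$. Quasiconformal maps of $\widehat{\mathbb C}$ preserve sets of zero area. It therefore suffices to show $\operatorname{mes}(J(\widetilde g))=0$.

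Now $J(\widetilde g)$ is a Jordan curve, and $\widetilde g$ restricted to it has degree $d_0=\deg_{\xi(E)}\sigma_g$. The two complementary disks are then Fatou domains, each totally invariant or permuted, and each contains a critical point of full local degree $d_0-1$ (via Riemann--Hurwitz, as $\widetilde g$ has degree $d_0$ on the Jordan curve). So $\widetilde g$ has no critical points on $J(\widetilde g)$, and its postcritical set lies in the Fatou set at positive distance from the Julia set; in fact each disk is an attracting or superattracting basin, since a parabolic basin would need the parabolic point on the curve, and McMullen's construction collapses the other complementary components. Hence $\widetilde g$ is hyperbolic, so its Julia set has zero area (hyperbolic Julia sets not equal to the sphere have measure zero).

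The main obstacle is the last step: justifying that $\widetilde g$ is hyperbolic. One must rule out a parabolic point on $J(\widetilde g)$ coming from the parabolic point of $g$ lying on $\xi(E)$ in the present case where $E_p$ is non-singleton, for instance when $\xi(E)=E_p^g$. If hyperbolicity fails, I would instead use the known result that geometrically finite rational maps (allowing parabolic points) have Julia sets of zero area or equal to the sphere. I would verify geometric finiteness from the fact that all critical points of $\widetilde g$ lie in the two basins and hence are non-recurrent and off the Julia set. Either route gives $\operatorname{mes}(\xi(E))=0$ and hence the lemma.
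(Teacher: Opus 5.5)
Your fallback is exactly the paper's proof: reduce to $\operatorname{mes}(\xi(E))=0$, apply McMullen's theorem to $(g,\xi(E))$, note that $J(\widetilde g)$ contains no critical points so $\widetilde g$ is geometrically finite, use the dichotomy $J(\widetilde g)=\widehat{\mathbb C}$ or $\operatorname{mes}(J(\widetilde g))=0$ (the first is excluded because $J(\widetilde g)$ is a Jordan curve), and pull back by the quasiconformal map $\varphi$. That fallback is necessary, not optional: when $0\in\xi(E)$, the curve $\xi(E)$ has a cusp at the parabolic point and is not a quasicircle, so neither is $J(\widetilde g)=\varphi(\xi(E))$; hence $\widetilde g$ cannot be hyperbolic, and your claim that both complementary disks are attracting or superattracting basins is false in exactly that case.
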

\begin{proof}
Since $\phi_E(E)=\xi(E)$, we have $\phi_E(\Omega_2)\subset \xi(E)$. It is enough to prove that $\operatorname{mes}(\xi(E))=0$.
	
Applying McMullen's theorem to the pair $(g,\xi(E))$, we obtain a rational map $R$ and a quasiconformal map $\varphi: \widehat{\mathbb{C}} \to \widehat{\mathbb{C}}$ such that $\varphi(\xi(E))=J(R)$ and $\varphi\circ g=R\circ \varphi$ on $\xi(E)$.
	
Since $g$ is a simple parabolic map, $\xi(E)$ contains no critical points of $g$, ensuring that $J(R)$ contains no critical points of $R$. Thus, $R$ is a geometrically finite rational map. A classical result for geometrically finite rational maps states that either $J(R)=\widehat{\mathbb{C}}$ or $\operatorname{mes}(J(R))=0$ (see \cite[Corollary 6.2]{McM00}). The first alternative cannot occur because $J(R)=\varphi(\xi(E))$ and $\xi(E)$ is a Jordan curve. Since quasiconformal maps preserve sets of zero Lebesgue measure, we obtain $\operatorname{mes}(\xi(E))=0$. 
\end{proof}

To show that $\underline{\mathcal H}(\phi_E,x)<\infty$ for every $x\in\Omega_2$, we introduce the bounded degree condition.
Suppose that $R$ is a rational map and $\{\Delta_n(x)\}_{n\ge 0}$ is a sequence of puzzle pieces containing $x$ with $\Delta_{n+1}(x)\subset \Delta_n(x)$. We say that $R$ satisfies the \textbf{bounded degree condition} at $x$ with respect to $\{\Delta_n(x)\}$ if there exist a fixed puzzle piece $\Delta_0$, an infinite sequence $i_n\to\infty$, and a constant $D<\infty$ such that $\deg(R^{i_n}:\Delta_{i_n}(x)\to \Delta_0)\le D$ for all $n\ge 1$.

\vspace{0.2em}
To verify the bounded degree condition for points on the Julia components, we resort to the \textbf{ray-cut puzzle pieces} constructed in the proof of \cite[Theorem 1.2]{GGP}. Briefly, for the non-singleton fixed Julia component $E$, we subdivide the puzzle piece $P_n(E)$ using preimages of specific internal rays and equipotential curves within the superattracting Fatou domain. This gives a finite partition $\mathcal X_n(E)$ of $P_n(E)$ into Jordan domains. For $x\in E\setminus\bigcup_{i\ge 0}f^{-i}(0)$, let $X_n(x)$ be the unique element of $\mathcal X_n(E)$ containing $x$. Then for every $n\ge 1$, there exists $k_n\ge 1$ such that $X_{n+k_n}(x)\Subset X_n(x)$, and $\bigcap_{n\ge 1}\overline{X_n(x)}=\{x\}$. Similarly, for the map $g$ and the Julia component $\xi(E)$, we have the corresponding ray-cut puzzle pieces $\{Y_n(y)\}$ for $y\in\xi(E)$, which have the same properties.

\begin{lemma}\label{lem:bounded_degree_ray_cut}
	For every $x\in E\setminus\bigcup_{i\ge 0}f^{-i}(0)$, $f$ satisfies the bounded degree condition at $x$ with respect to $\{X_n(x)\}_{n\ge 1}$, and analogously for $g$ at any $y\in \xi(E)\setminus\bigcup_{i\ge 0}g^{-i}(0)$ with respect to $\{Y_n(y)\}_{n\ge 1}$.
\end{lemma}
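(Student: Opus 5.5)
The plan is to exploit the fact that the ray-cut puzzle partitions $\mathcal X_n(E)$ are finite and that $E$ itself contains no critical points of $f$. Then the only sources of degree along the orbit of $x$ are critical points in the thin regions between consecutive pieces, and these can be controlled. I describe the argument for $f$; the statement for $g$ and $\xi(E)$ is identical with $\{Y_n(y)\}$ in place of $\{X_n(x)\}$.

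\textbf{Step 1 (the orbit stays near $E$).} Fix $x\in E\setminus\bigcup_{i\ge0}f^{-i}(0)$. Since $E$ is fixed, $f^n(x)\in E\setminus\bigcup_i f^{-i}(0)$ for all $n$. By construction of the ray-cut puzzles (preimages of internal rays and equipotentials in $\operatorname{Int}(\widehat E)$, together with $\partial U_{N+n}$), $f$ maps each element of $\mathcal X_{n+1}(E)$ onto an element of $\mathcal X_n(E)$. In particular $f(X_{n+1}(x))=X_n(f(x))$, and so $f^{n}(X_{n+1}(x))=X_1(f^{n}(x))$. Since $\mathcal X_1(E)$ is finite, infinitely many $n$ give the same piece $\Delta_0:=X_1(f^{n}(x))\in\mathcal X_1(E)$. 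We take $i_n$ along this subsequence, after reindexing so that $\Delta_{i_n}(x)=X_{i_n+1}(x)$.

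\textbf{Step 2 (bounding the degree).} We have
\[
\deg\bigl(f^{n}:X_{n+1}(x)\to X_1(f^n(x))\bigr)=\prod_{j=0}^{n-1}\deg\bigl(f:X_{n+1-j}(f^j(x))\to X_{n-j}(f^{j+1}(x))\bigr).
\]
Each factor exceeds $1$ only if $X_{n+1-j}(f^j(x))$ contains a critical point. The critical points are finite in number and none lies on $E$, since $f$ is simple parabolic and $E$ is a Jordan curve bounding a Fatou domain with one postcritical point. Hence there is $m_0$ such that no piece of $\mathcal X_m(E)$ with $m\ge m_0$ meets $C(f)$. This uses $\bigcap_n\overline{X_n(y)}=\{y\}$ together with a uniform version, namely that the mesh of $\mathcal X_m(E)$ tends to $0$ near $E$. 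That uniform version follows from the nesting $X_{n+k_n}\Subset X_n$ and compactness of $E$. Consequently only the last $m_0$ factors, those with $n+1-j<m_0$, can exceed $1$. Each of those is at most $d$, so the degree is at most $D:=d^{m_0}$, independent of $n$.

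\textbf{Main obstacle.} The delicate point is the uniform shrinking in Step 2. Pointwise shrinking does not immediately prevent pieces $X_m(y)$ with large $m$ from reaching critical points located in $\operatorname{Int}(\widehat E)$, in particular the superattracting critical point $c$. I would handle this by replacing $X_m(y)$ with its intersection away from a fixed equipotential disk around $c$: the ray-cut pieces at depth $m$ lie outside the $m$-th preimage equipotential, so they avoid a neighborhood of $c$ that grows with $m$. For critical points outside $\widehat E$, I would use property (P7), $\bigcap_n\widetilde P_n(E)=\widehat E$, since $X_m\subset P_m(E)$. With both uses of uniform shrinking justified this way, the bound $D=d^{m_0}$ holds.
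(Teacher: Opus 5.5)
Your proposal is correct and takes essentially the same route as the paper. You factor $\deg\bigl(f^{n}\colon X_{n+1}(x)\to X_1(f^n(x))\bigr)$ by the chain rule, note that beyond a fixed depth $m_0$ (the paper's $n_0$) the pieces along the orbit of $x$ in $E$ contain no critical points, so only boundedly many factors can exceed $1$, and then pigeonhole over the finitely many target pieces; the paper merely asserts the existence of $n_0$, and your justification (using (P7) for critical points outside $\widehat E$ and the equipotential boundary of the ray-cut pieces to keep away from the center $c$) supplies that implicit step.
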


\begin{proof}
	We prove the statement for $f$, and the proof for $g$ is the same. There exists an integer $n_0\ge 1$ such that the puzzle piece $X_{n_0}(f^j(x))$ contains no critical points for all $j>0$. Then $\deg(f|_{X_{n_0+k}(f^j(x))})=1$ for all $k\geq 1$ and $j>0$.
	Hence, by the chain rule for degrees,
	\begin{align*}
		&\deg\left(f^{n_0+n}:X_{n_0+n}(x)\to X_0(f^{n_0+n}(x))\right)\\
		&=\deg(f|_{X_{n_0+n}(x)})\cdot \deg(f|_{X_{n_0+n-1}(f(x))})\cdots \deg(f|_{X_{n_0+1}(f^{n-1}(x))})\cdot \deg(f^{n_0}|_{X_{n_0}(f^n(x))})\\
		&\le \deg(f|_{X_{n_0}(x)})\cdot \deg(f^{n_0}|_{X_{n_0}(f^n(x))})\le D
	\end{align*}
	for a constant $D>0$ independent of $n$. Since the puzzle pieces at a given depth are finite, we can take a subsequence $\{i_n\}$ of $\{n_0+n\}$ such that $X_0(f^{i_n}(x))$ is always the same puzzle piece $X_0$. This proves the bounded degree condition.
\end{proof}

\begin{lemma}\label{lem:parabolic_escape}
Assume that $0\in E$. Let $x\in E\setminus\bigcup_{i\geq0}f^{-i}(0)$ be a point such that $f^{i_n}(x)\to0$ along a sequence $i_n\to\infty$ satisfying the bounded degree condition. Then there exist an increasing sequence of integers $m_n \ge 1$ and a fixed depth-2 puzzle piece $X_2 \Subset X_{0,0}$, where $X_{0,0}$ denotes the depth-0 ray-cut puzzle piece with $0\in\partial X_{0,0}$, such that for $k_n := i_n + m_n - 1$, we have $f^{k_n}(x) \in X_2$ and $\deg(f^{k_n}: X_{k_n}(x) \to X_{0,0})$ is bounded.
\end{lemma}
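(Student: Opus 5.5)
The idea is to follow the orbit of $x$ beyond the times $i_n$ at which it comes close to $0$. The orbit must leave every small neighbourhood of $0$ along the repelling directions. We will stop it at the first time it lands in a fixed ray-cut piece $X_2$ lying compactly inside $X_{0,0}$.

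\textbf{Local picture near $0$.} The point $0$ lies on the Jordan curve $E$. In a small disk $W^*_0$ around $0$, the set $E\cap W^*_0$ lies in the union of repelling petals, since the attracting flower $U_0$ sits in $U$. Also, $x$ is not a preimage of $0$, so $f^{i}(x)\neq 0$ for every $i$.

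\textbf{Choice of $X_2$.} The ray-cut pieces of depth $2$ meeting $E$ near $0$ come in two kinds: those having $0$ on their boundary, and the remaining ones. Those with $0$ on the boundary are the pieces adjacent to $0$ on the two sides within $E$, or more generally one for each repelling direction. Let $D$ be the complement in $X_{0,0}$ of the union of the depth-$2$ pieces with $0$ on the boundary.

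Points of $E$ close to $0$ but not equal to $0$ are pushed away from $0$ along repelling petals. So once $f^{i_n}(x)$ is close to $0$, there is a first time $m_n\ge 1$ with $f^{i_n+m_n-1}(x)$ outside the depth-$2$ pieces touching $0$. I would argue that this escape lands in one of finitely many depth-$2$ pieces $X_2\Subset X_{0,0}$, namely the pieces adjacent to those touching $0$. Since there are finitely many choices, passing to a subsequence fixes one $X_2$. The condition $X_2\Subset X_{0,0}$ holds because, by the construction of ray-cut pieces, only the pieces at $0$ touch $\partial X_{0,0}$ near $0$. Then $m_n$ increasing is arranged by extracting a subsequence.

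\textbf{Degree bound.} By the chain rule, split $f^{k_n}$ as $f^{m_n-1}\circ f^{i_n}$. The first factor $f^{i_n}$ on $X_{k_n}(x)$ has degree bounded by $D$; here I would use the bounded-degree hypothesis, reindexed via the Markov nesting so that $X_{k_n}(x)\subset X_{i_n}(x)$ maps into the relevant deeper piece. The remaining $m_n-1$ iterates take place inside the depth-$2$ pieces touching $0$. These are contained in $W_0$, which contains no critical values by \eqref{eq:W0_critical}, so these iterates are univalent. Hence the total degree onto $X_{0,0}$ is at most $D$ times the bounded degree of the last step from $X_2$ to $X_{0,0}$.

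\textbf{Main obstacle.} The hard step is to justify that the escaping orbit lands in a fixed compactly contained piece, and does not slide along $E$ in a way that keeps hitting pieces whose boundary contains $0$. This uses that $E$ is a Jordan curve through $0$ transverse to the flower. I would handle it via Fatou coordinates on the repelling petals: in these coordinates one step of $f$ is a translation by $1$, so the exit lands in a fundamental domain. That fundamental domain is covered by finitely many depth-$2$ pieces compactly inside $X_{0,0}$.
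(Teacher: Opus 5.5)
Your skeleton is the same as the paper's. Your first-exit time from the pieces at $0$ is exactly the paper's $k_n=i_n+m_n-1$. The paper chooses $m_n$ with $f^{i_n}(x)\in X_{m_n,0}\setminus\overline{X_{m_n+1,0}}$, which is possible because $\bigcap_m\overline{X_{m,0}}=\{0\}$ and $f^{i_n}(x)\neq 0$. The degree is then split as $f^{m_n-1}\circ f^{i_n}$. Your bound for the first factor is fine: you restrict $f^{i_n}\colon X_{i_n}(x)\to X_0$ to the component $X_{k_n}(x)$ lying over $X_{m_n-1,0}$. The paper instead quotes Lemma~\ref{lem:bounded_degree_ray_cut}, which bounds the degree at every time.

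However, the two remaining steps are justified incorrectly. The one fact that handles both is missing: for $m\ge 1$ the piece $X_{m,0}$ contains no critical point and $f$ maps it conformally onto $X_{m-1,0}$, so $f^{m_n-1}\colon X_{m_n-1,0}\to X_{0,0}$ is conformal.

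\textbf{Univalence.} Your argument (``the depth-$2$ pieces touching $0$ are contained in $W_0$, which has no critical values'') fails as stated, for three reasons. The ray-cut pieces have a fixed size, while $W_0$ was shrunk arbitrarily. The pieces on which the iterates act are $X_{m_n-1,0},\dots,X_{1,0}$, down to depth $1$, not just depth-$2$ pieces. And univalence needs these pieces to contain no critical \emph{points}; avoiding critical values is the wrong condition. There is also no ``last step from $X_2$ to $X_{0,0}$'': $f^{k_n}$ maps $X_{k_n}(x)$ directly onto $X_{0,0}$, with degree $\deg\bigl(f^{i_n}|_{X_{k_n}(x)}\bigr)\cdot 1\le D$.

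\textbf{Landing in a compactly contained piece.} What you call the main obstacle needs no Fatou coordinates. The same conformal map sends the shell $X_{m_n,0}\setminus\overline{X_{m_n+1,0}}$ onto $X_{1,0}\setminus\overline{X_{2,0}}$. Hence $f^{k_n}(x)$ lies in one of the finitely many depth-$2$ pieces inside $X_{1,0}$ whose boundaries miss $0$, and you pass to a subsequence. The paper then gets compact containment in $X_{0,0}$ from the global fact $(\partial X_{1,0}\cap\partial X_{0,0})\cap\overline U=\{0\}$. Your local remark that only pieces at $0$ touch $\partial X_{0,0}$ \emph{near} $0$ does not rule out contact elsewhere. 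The repelling Fatou coordinate adds nothing either: $X_{1,0}$ need not lie in a repelling petal. The ``fundamental domain'' you want is just $f(X_{2,0})\setminus X_{2,0}=X_{1,0}\setminus X_{2,0}$, which is the Markov property again.
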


\begin{proof}
	Let $X_{n,0}$ denote the depth-$n$ ray-cut puzzle piece with $0 \in \partial X_{n,0}$. Since $f^{i_n}(x) \to 0$, we may assume that $f^{i_n}(x) \in X_{0,0}$ for all sufficiently large $n$.
	
	Because $\bigcap_{m \ge 0} \overline{X_{m,0}} = \{0\}$, there exists an increasing sequence of integers $m_n \ge 1$ such that $f^{i_n}(x) \in X_{m_n,0} \setminus \overline{X_{m_n+1,0}}$. The map $f^{m_n-1}: X_{m_n-1,0} \to X_{0,0}$ is conformal, which implies that the restricted map
	$$ f^{m_n-1}: X_{m_n+1}(f^{i_n}(x)) \to X_2(f^{i_n+m_n-1}(x)) $$
	is also conformal.
	
	Set $k_n := i_n + m_n - 1$. Note that $X_{k_n}(x)$ is the component of $f^{-k_n}(X_{0,0})$ containing $x$. By Lemma~\ref{lem:bounded_degree_ray_cut}, $\deg(f^{k_n}: X_{k_n}(x) \to X_{0,0}) \le D$ for a constant $D>0$. We have
	\[
	\begin{tikzcd}[column sep=large, row sep=small]
		X_{k_n}(x)
		\arrow[r, "f^{i_n}"]
		& X_{m_n-1,0}
		\arrow[r, "f^{m_n-1}"]
		& X_{0,0} \\
		X_{k_n+2}(x)
		\arrow[u, hook]
		\arrow[r, "f^{i_n}"']
		& X_{m_n+1}(f^{i_n}(x))
		\arrow[u, hook]
		\arrow[r, "f^{m_n-1}"']
		& X_2(f^{k_n}(x))
		\arrow[u, hook]
	\end{tikzcd}
	\]
	Since $\big(\partial X_{1,0} \cap \partial X_{0,0}\big)\cap \overline{U} = \{0\}$ and $\partial X_2(f^{k_n}(x))$ does not contain $0$, we obtain  $X_2(f^{k_n}(x)) \Subset X_{0,0}$. Since there are finitely many depth-2 puzzle pieces contained strictly in $X_{0,0}$, passing to a subsequence allows us to assume that $f^{k_n}(x)$ lands in a fixed depth-2 puzzle piece $X_2 \Subset X_{0,0}$.
\end{proof}

We need the following lemma for the required estimate of $\underline{\mathcal H}(\phi_E,x)$ on $\Omega_2$.

\begin{lemma}{\rm(\cite[Theorem 1.4]{Hai})}\label{lem:haissinsky}
For any $m>0$ and $d\geq1$, there exists a constant $C=C(m,d)\ge 1$ such that the following holds. Let $G:U\to V$ be a proper holomorphic map of degree $d$ between bounded simply-connected domains in $\mathbb{C}$. Let $D\subset V$ be a simply-connected domain with $ D\Subset V$ and $\operatorname{mod}(V\setminus\overline D)\ge m$. Fix $z\in D$ and assume that $\operatorname{Shape}(D,z)\le M$. If $D'$ is a simply-connected component of $G^{-1}(D)$ and $w\in D'\cap G^{-1}(z)$, then $\operatorname{Shape}(D',w)\le C M$.
\end{lemma}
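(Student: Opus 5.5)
The statement is \cite[Theorem~1.4]{Hai}, so in the paper it can simply be quoted. If one wants an argument, I would reduce to finite Blaschke products by uniformizing, and then prove a shape estimate for lemniscate-type preimages. Throughout, $\operatorname{Shape}(D,z)\le M$ is used in the form: there is $r>0$ with $B(z,r)\subset D\subset B(z,Mr)$.

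\textbf{Step 1 (collar for $D'$).} Let $A$ be the component of $G^{-1}(V\setminus\overline D)$ adjacent to $D'$ and separating it from $\partial U$. Since $G$ is proper of degree $d$, the map $G:A\to V\setminus\overline D$ is a covering of degree at most $d$. Hence $\operatorname{mod}(A)\ge m/d$, so $D'$ sits inside $U$ with a collar of modulus $m':=m/d$.

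\textbf{Step 2 (normalize).} Let $\psi_V:V\to\mathbb D$ and $\psi_U:U\to\mathbb D$ be Riemann maps with $\psi_V(z)=0$ and $\psi_U(w)=0$. Then $B:=\psi_V\circ G\circ\psi_U^{-1}$ is a Blaschke product of degree $d$ with $B(0)=0$.

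By Teichmüller's modulus estimate, $\psi_V(D)\subset B(0,\rho(m))$ and $\psi_U(D')\subset B(0,\rho(m'))$, with $\rho<1$ depending only on $m$ (resp.\ $m'$). By the Koebe distortion theorem, $\psi_V$ on $D$ and $\psi_U^{-1}$ on $\psi_U(D')$ distort shapes by at most a factor $C_1(m,d)$. So it suffices to prove the following estimate for $B$ on $\overline{B(0,\rho(m'))}$.

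\textbf{Claim.} Suppose $B(0,s)\subset\Delta\subset B(0,C_1Ms)$ and $\Delta'$ is the component of $B^{-1}(\Delta)$ containing $0$, with $\Delta'\subset B(0,\rho')$. Then $\operatorname{Shape}(\Delta',0)\le C_2(m,d)\,M$.

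\textbf{Step 3 (the main obstacle: the shape estimate).} The difficulty is that zeros and critical points of $B$ may cluster near $0$ at arbitrary relative scales, so a naive compactness argument fails. I would proceed as follows.

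On $B(0,\rho')$, each Blaschke factor is comparable to $|\zeta-a_i|$ up to constants depending on $\rho'$, and zeros far from $0$ contribute bounded factors. Hence $|B(\zeta)|\asymp\prod_{i\le k}|\zeta-a_i|$, with $k\le d$ and $a_1=0$.

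Let $\Lambda_t$ be the component containing $0$ of the lemniscate sublevel set $\{|B|<t\}$. Its inner radius $r_{\mathrm{in}}(t)$ and outer radius $r_{\mathrm{out}}(t)$ about $0$ are both comparable to the unique $r$ solving
\[
r\prod_{i\ge2}\max(r,|a_i|)=t.
\]
This holds because on the circle of radius $\asymp r$ each factor is comparable to $\max(r,|a_i|)$, except near other roots, which lie inside $\Lambda_t$ anyway. The function $r\mapsto r\prod_{i\ge2}\max(r,|a_i|)$ is increasing with logarithmic slope between $1$ and $k$. Therefore replacing $t$ by $C_1Mt$ multiplies $r$ by at most $C_1M$.

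Since $\Lambda_s\subset\Delta'\subset\Lambda_{C_1Ms}$, this gives $\operatorname{Shape}(\Delta',0)\le C_2 M$. Making the comparability constants in the root-clustering bookkeeping uniform in $d$ and $\rho'$ is the step I expect to require the most care.

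Finally, transferring back via $\psi_U^{-1}$ gives $\operatorname{Shape}(D',w)\le CM$ with $C=C(m,d)$.
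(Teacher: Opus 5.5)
The paper gives no proof of this lemma. It is quoted from Ha\"issinsky \cite[Theorem 1.4]{Hai} and used as a black box. Your proposal is therefore a genuinely different, self-contained route, and it works once the two steps below are repaired. You uniformize to a degree-$d$ Blaschke product $B$ with $B(0)=0$, and use Gr\"otzsch and Koebe to reduce to a fixed subdisk. You then sandwich $\psi_U(D')$ between the sublevel components $\Lambda_s\subset\psi_U(D')\subset\Lambda_{C_1Ms}$, whose radii are governed by $\Phi(r)=r\prod_{i\ge2}\max(r,|a_i|)$. What your argument buys is an explicit mechanism. Linearity in $M$ comes from $\Phi(\lambda r)\ge\lambda\Phi(r)$ for $\lambda\ge1$. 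Uniformity over all scales, which a naive compactness argument cannot give (as you correctly note), comes from the comparison with $\Phi$ being multiplicative at every scale. Citing Ha\"issinsky buys only brevity.

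The first repair is in Step~1. The map $G\colon A\to V\setminus\overline D$ is not a covering in general, because critical values of $G$ may lie in $V\setminus\overline D$. For example, take $G(\zeta)=\zeta^2$ on $\mathbb D$ and $D$ a small disk about $1/4$. Then the complementary component adjacent to $D'$ is triply connected and contains the critical point $0$. The bound you need, that the annulus separating $D'$ from $\partial U$ has modulus at least $m/d$, is still true, by pulling back metrics. Let $\rho$ be admissible for the curves crossing $V\setminus\overline D$. Then $\rho(G)\,|G'|$ is admissible for the curves joining $\partial D'$ to $\partial U$, since each image curve contains such a crossing. Its area is at most $d\cdot\operatorname{area}(\rho)$.

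The second repair is in Step~3. Your outer-radius argument, that roots near the circle ``lie inside $\Lambda_t$ anyway,'' is false. For $\zeta(\zeta-a)$ with $|a|=5r$ and $t=5r^2$, the sublevel set has separate components around $0$ and $a$. Near such a root $|B|$ is small, so that circle is no barrier. Argue instead as follows, with $r=\Phi^{-1}(t)$.
\begin{itemize}
\item For the inner radius, use $\bigl|\frac{\zeta-a_i}{1-\overline{a_i}\zeta}\bigr|\le|\zeta|+|a_i|$. This gives $|B(\zeta)|\le 2^{d-1}\Phi(|\zeta|)$ on all of $\mathbb D$.
\item For the outer radius, pick by pigeonhole one of the $d$ annuli $\{4^jR_0<|\zeta|<4^{j+1}R_0\}$, $0\le j\le d-1$, $R_0=4^dr$, containing no $|a_i|$. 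On its middle circle $|\zeta|=R$ one has $|\zeta-a_i|\ge\frac12\max(R,|a_i|)$ and $|1-\overline{a_i}\zeta|\le2$. Hence $|B|\ge4^{-(d-1)}\Phi(R)\ge4^{-(d-1)}(R/r)\,t>t$.
\item It follows that $\Lambda_t\subset B(0,R)$ with $R\le C_d\,r$; this is trivial if $R\ge1$.
\end{itemize}
Both estimates hold on the whole disk, so Step~3 needs no uniformity in $\rho'$. The constant $\rho'$ enters only in the Koebe transfer back to $D'$.
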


\begin{lemma}\label{lem:finite_lower_dilatation_Omega2}
For every $x\in\Omega_2$, we have $\underline{\mathcal{H}}(\phi_E,x)<\infty$.
\end{lemma}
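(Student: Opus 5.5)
Fix $x\in\Omega_2$; thus $x\in E$ and its forward orbit never hits $0$. The plan is to exhibit round-ish neighborhoods of $x$, at arbitrarily small scales, whose $\phi_E$-images have bounded shape. We obtain these by pulling back a fixed pair of nested domains along the ray-cut puzzle, using the bounded degree condition of Lemma~\ref{lem:bounded_degree_ray_cut} together with Ha\"issinsky's Lemma~\ref{lem:haissinsky}.

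\textbf{Step 1: choose the return sequence.} By Lemma~\ref{lem:bounded_degree_ray_cut} there are a fixed piece $X_0$, times $i_n\to\infty$ and a bound $D$ with $\deg(f^{i_n}:X_{i_n}(x)\to X_0)\le D$. The argument splits according to the accumulation points of $f^{i_n}(x)$.

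If infinitely many $f^{i_n}(x)$ stay a definite distance from $0$, pass to a subsequence so that $f^{i_n}(x)\to x_*\in E\setminus\{0\}$. Then thicken the target: take a deeper piece $X'\Subset X_0$ containing $f^{i_n}(x)$ for all large $n$. This uses $X_{m+k_m}(\cdot)\Subset X_m(\cdot)$; a finite choice suffices after passing to a subsequence.

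If instead $f^{i_n}(x)\to0$, apply Lemma~\ref{lem:parabolic_escape}. This replaces $i_n$ by $k_n$ with $f^{k_n}(x)\in X_2\Subset X_{0,0}$ and $\deg(f^{k_n}:X_{k_n}(x)\to X_{0,0})$ bounded.

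In either case we get a fixed pair $A\Subset B$ of ray-cut pieces (with $\operatorname{mod}(B\setminus\overline A)=m>0$), times $k_n\to\infty$, and a degree bound $D$ for $f^{k_n}$ from $B_n:=X_{k_n}(x)$ onto $B$. Here $A_n:=\operatorname{Comp}_x f^{-k_n}(A)$ and $f^{k_n}(x)\in A$. The same combinatorics hold for $g$ at $y=\phi_E(x)$. Indeed, $\phi_E$ conjugates on $E$, and the ray-cut pieces of $g$ correspond to those of $f$ under $\phi_E$, since they are built from the same internal rays and equipotentials; $\phi_E$ is conformal in $\operatorname{Int}(\widehat E)$ and agrees with $h$ on $U$. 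So $f^{k_n}(x)$ and $g^{k_n}(y)$ lie in the corresponding pieces $A,\phi_E(A)$, with bounded degree $D$.

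\textbf{Step 2: pull back round disks.} Because $f^{k_n}(x)$ ranges over the compact set $\overline A$, pick $r>0$ such that every disk $\Delta_n:=B(f^{k_n}(x),r)$ lies in a fixed neighborhood $A'$ of $\overline A$ with $A'\Subset B$. Since $\phi_E$ is a homeomorphism, $\phi_E(\Delta_n)$ has shape at most some $M$ about $g^{k_n}(y)$, uniformly in $n$, and lies compactly inside $\phi_E(B)$ with a definite modulus. Let $D_n:=\operatorname{Comp}_x f^{-k_n}(\Delta_n)$. Lemma~\ref{lem:haissinsky} gives $\operatorname{Shape}(D_n,x)\le C$. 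Since $\phi_E$ conjugates $f^{k_n}$ and $g^{k_n}$ on $B_n$, we have $\phi_E(D_n)=\operatorname{Comp}_y g^{-k_n}(\phi_E(\Delta_n))$, and the same lemma gives $\operatorname{Shape}(\phi_E(D_n),y)\le CM$.

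\textbf{Step 3: go from shapes to $\underline{\mathcal H}$.} The pieces $B_n$ shrink to $x$, so $\operatorname{diam}D_n\to0$. Let $r_n$ be the inradius of $D_n$ about $x$, so that $B(x,r_n)\subset D_n\subset B(x,Cr_n)$. Applying the shape bound to the image, the ratio of $\sup$ to $\inf$ of $|\phi_E(\cdot)-y|$ on $|z-x|=r_n$ is bounded in terms of $C$, $CM$ and a standard modulus comparison. Hence $\underline{\mathcal H}(\phi_E,x)<\infty$.

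\textbf{Main obstacle.} The delicate point is that the conjugacy $\phi_E\circ f=g\circ\phi_E$ must hold on the whole pulled-back region $B_n$, not just on $E$. Proposition~\ref{prop:local_conjugacy} provides it only on $P_1(E)$ and its inverse images inside $\widehat P_0(E)$, so I must check that the ray-cut pieces used lie where the conjugacy is valid. If that fails, I would instead pull back via a conformal correction: write $\phi_E$ on $D_n$ as $(g^{k_n})^{-1}\circ\psi\circ f^{k_n}$, where $\psi$ is a fixed quasiconformal map near $\overline A$. This is bounded distortion for a fixed map, and suffices for the shape estimate.
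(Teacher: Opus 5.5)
Your Steps~1 and~2 are sound and parallel the paper's setup: bounded-degree returns from Lemma~\ref{lem:bounded_degree_ray_cut}, the parabolic-escape modification, and Lemma~\ref{lem:haissinsky} applied to pullbacks. Step~3, however, is the heart of the lemma, and it has a genuine gap. The bounds $\operatorname{Shape}(D_n,x)\le C$ and $\operatorname{Shape}(\phi_E(D_n),y)\le CM$ give an upper bound for $\sup_{|w-x|=r_n}|\phi_E(w)-y|$ in terms of the inradius of $\phi_E(D_n)$. They give no lower bound for $\inf_{|w-x|=r_n}|\phi_E(w)-y|$.

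Near $x$, $\phi_E$ is at this stage only known to be a homeomorphism; its quasiconformality there is exactly what this lemma feeds into. So nothing prevents $\phi_E(B(x,r_n))$ from being a thin subset of $\phi_E(D_n)$ whose boundary passes very close to $y$. There is no ``standard modulus comparison'' for a map with no distortion control at scale $r_n$. To bound the infimum you must tie the round disk itself to the dynamics. Concretely, you need that $f^{k_n}$ maps the circle $|w-x|=r_n$ a definite distance away from $f^{k_n}(x)$. Equivalently, the component containing $x$ of the pullback of a small disk about $f^{k_n}(x)$ must stay inside $B(x,r_n)$. This can fail when another preimage $x_j\neq x$ of $f^{k_n}(x)$ lies near that circle, and shape bounds for $D_n$ and $\phi_E(D_n)$ say nothing about it.

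The paper supplies exactly this missing control. Via \cite[Lemma~4]{YZ}, it chooses $\gamma_n=\partial B(x,r_n)$ at hyperbolic distance at least $\varepsilon(D)$ in $X_{i_n}(x)$ from every preimage of $f^{i_n}(x)$. Writing $F_n=\beta_n^{-1}\circ f^{i_n}\circ\alpha_n$ as a Blaschke product of degree at most $D$, this forces $F_n(\alpha_n^{-1}(\gamma_n))\subset\{r<|z|<R\}$. Hence $f^{i_n}(\gamma_n)$ bounds a region of bounded shape in a fixed compact part of $X_0$, whose $\phi_E$-image has uniform shape and modulus bounds. Lemma~\ref{lem:haissinsky} for $g^{i_n}$ then controls $\operatorname{Shape}(\phi_E(B(x,r_n)),\phi_E(x))$ directly.

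Within your framework you could repair Step~3 as follows. First show that, for a uniform $\epsilon>0$, the component of $f^{-k_n}(B(f^{k_n}(x),\epsilon r))$ containing $x$ lies in $B(x,r_n)$. This holds because $\{|G|<\epsilon\}$, for a Blaschke product $G$ of degree at most $D$, lies in at most $D$ pseudo-hyperbolic disks of radius $\epsilon^{1/D}$ about its zeros, so its component containing $0$ is hyperbolically small; then apply Koebe. Next, use the conjugacy and the Schwarz lemma on the $g$-side to place a round disk about $y$ of size comparable to the inradius of $\phi_E(D_n)$ inside $\phi_E(B(x,r_n))$.

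Two smaller points. The obstacle you flagged is not one: $X_k(x)\subset P_k(E)$ and $f^j(X_k(x))\subset P_{k-j}(E)\subset P_1(E)$ for $0\le j<k$, so the construction of $\phi_E$ already gives $\phi_E\circ f^k=g^k\circ\phi_E$ on $X_k(x)$. Your fallback $(g^{k_n})^{-1}\circ\psi\circ f^{k_n}$ would define a different map, not $\phi_E$. Also, $\phi_E$ does not agree with $h$ on all of $U$, since it differs inside the pieces of $\mathcal B_n(E)$. You never need the $g$-side sets to be ray-cut pieces, though: working with $\phi_E(X_k(x))$ and $\phi_E(X_0)$ suffices.
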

\begin{proof}
The idea of this proof is inspired by \cite[Proposition 5]{Zhai}. Let $x\in\Omega_2=E\setminus\bigcup_{i\ge 0}f^{-i}(0)$. By Lemma~\ref{lem:bounded_degree_ray_cut}, there exists a subsequence $i_n\to\infty$, a fixed ray-cut puzzle piece $X_0$ and a constant $D>0$ such that
$\deg(f^{i_n}:X_{i_n}(x)\to X_0)\le D $.
	
Passing to a subsequence, we may assume that $f^{i_n}(x)\to x_0\in\overline{X_0}$. If $x_0=0$, then by Lemma~\ref{lem:parabolic_escape}, we can replace $i_n$ by a sequence $k_n$ such that $f^{k_n}(x)$ lies in a fixed ray-cut puzzle piece $X_2\Subset X_0$, and the degrees $\deg(f^{k_n}:X_{k_n}(x)\to X_0)$ remain uniformly bounded. If $x_0\ne0$, after passing to a subsequence we can also find a fixed puzzle piece $X_2\Subset X_0$ such that $f^{i_n}(x)\in X_2$. Relabeling the resulting sequence as $\{i_n\}$, we may assume that $f^{i_n}(x)\in X_2\Subset X_0$ for all $n$.

Let $\alpha_n:\mathbb{D}\to X_{i_n}(x)$ and $\beta_n:\mathbb{D}\to X_0$ be conformal maps satisfying $\alpha_n(0)=x$, $\beta_n(0)=f^{i_n}(x)$.
Then $F_n:=\beta_n^{-1}\circ f^{i_n}\circ\alpha_n$ is a proper holomorphic map of degree $d_n\leq D$ satisfying $F_n(0)=0$ (see Figure~\ref{fig:2}).
	
\begin{figure}[htbp]
	\centering
	\includegraphics[width=9.5cm]{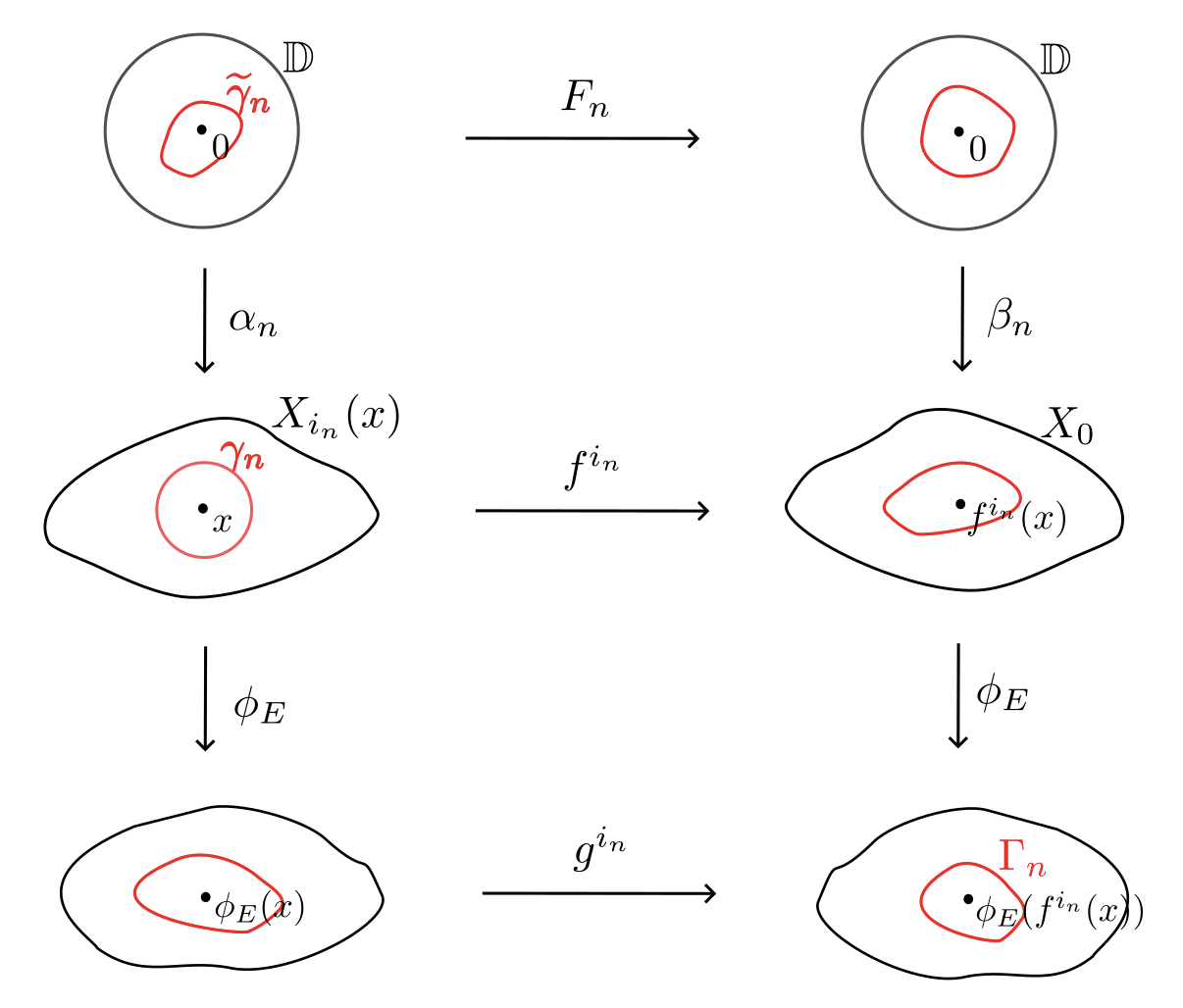}
	\caption{}
	\label{fig:2}
\end{figure}

Let $x=x_1,x_2,\ldots,x_{q_n}$ be the distinct preimages of $f^{i_n}(x)$ in $X_{i_n}(x)$, and let $m_j$ be the multiplicity of $x_j$. Thus $\sum_{j=1}^{q_n}m_j=d_n\le D$. Applying \cite[Lemma 4]{YZ} and its proof to $X_{i_n}(x)$, we obtain a circle $\gamma_n=\partial B(x,r_n)$ such that $r_n<\frac12\operatorname{dist}(x,\partial X_{i_n}(x))$ and
\[
d_{\rho_{X_{i_n}(x)}}(\zeta,x_j)\ge\varepsilon
\quad\text{for all }\zeta\in\gamma_n\text{ and }1\le j\le q_n,
\]
where $d_{\rho_{X_{i_n}(x)}}(\cdot,\cdot)$ denotes the distance under the hyperbolic metric $\rho_{X_{i_n}(x)}$ on $X_{i_n}(x)$, and $\operatorname{dist}(\cdot,\cdot)$ denotes the Euclidean distance. Since $q_n\le D$, the constant $\varepsilon$ can be chosen depending only on $D$.

\vspace{0.2em}
Set $a_{j}:=\alpha_n^{-1}(x_{j})$ and
$\widetilde\gamma_n:=\alpha_n^{-1}(\gamma_n)$. Since $\alpha_n$ is an isometry for the hyperbolic metrics, we have
\[
d_{\rho_{\mathbb{D}}}(z,a_{j})\ge\varepsilon \quad \text{for all} \ z\in\widetilde\gamma_n \ \text{and}\  1\le j\le q_n.
\]
We also have $B(x,2r_n)\subset X_{i_n}(x)$. By the monotonicity of the hyperbolic metric, the hyperbolic diameter of $\gamma_n$ in $ X_{i_n}(x)$ is bounded by the hyperbolic diameter of $\partial B(x,r_n)$ in $B(x,2r_n)$. 
Therefore, there exists a constant $L>0$ such that $\operatorname{diam}_{\rho_{\mathbb{D}}}(\widetilde\gamma_n)\le L$ for all $n$.

\begin{claim}
There exist constants $0<r<R<1$ such that $F_n(\widetilde\gamma_n)\subset\{r<|z|<R\}$ for all $n$.
\end{claim}

\begin{proof}
Since $F_n$ is a finite Blaschke product, it can be written as
\[
F_n(z)=e^{i\theta_n}
\prod_{j=1}^{q_n}
\left(
\frac{z-a_{j}}{1-\overline{a_{j}}z}
\right)^{m_{j}}.
\]
For any $z\in\widetilde{\gamma}_n$, we have 
\[
d_{\rho_{\mathbb{D}}}(\frac{z-a_j}{1-\overline{a_{j}}z},0)=d_{\rho_{\mathbb{D}}}(z,a_j)\geq \varepsilon.
\]
Then
\[
\left|\frac{z-a_{j}}{1-\overline{a_{j}}z}\right|\ge \frac{e^\varepsilon-1}{e^\varepsilon+1}=:\delta.
\]
 It follows that
$|F_n(z)|\ge\delta^{d_n}\ge\delta^D$ on $\widetilde\gamma_n$. Thus the required lower bound holds after taking, for example, $r=\delta^D/2$.

By the Schwarz--Pick lemma \cite{Duren}, $\operatorname{diam}_{\rho_{\mathbb{D}}}F_n(\widetilde\gamma_n)\leq \operatorname{diam}_{\rho_{\mathbb{D}}}\widetilde\gamma_n\leq L$.
Then there exists $R<1$ such that $F_n(\widetilde\gamma_n)\subset \{|z|\le R\}$ for all $n$.
\end{proof}
Set
\[
\Gamma_n:=\phi_E\big(\beta_n(F_n(\widetilde\gamma_n))\big)
=\phi_E\big(f^{i_n}(\gamma_n)\big),
\]
and let $\mathcal D_n$ be the simply-connected domain enclosed by $\Gamma_n$ and containing
$\phi_E(f^{i_n}(x))$.
The above claim and the Koebe distortion theorem \cite{Duren} imply that 
$\beta_n(F_n(\widetilde\gamma_n))$ remain in a fixed compact subset of $X_0$ and have uniformly bounded shape about $f^{i_n}(x)$. Since $\phi_E$ is a homeomorphism, there are constants $m>0$ and $M\ge1$, independent of $n$, such that
\[
\operatorname{mod}\big(\phi_E(X_0)\setminus\overline{\mathcal D_n}\big)\ge m,
\qquad
\operatorname{Shape}\big(\mathcal D_n,\phi_E(f^{i_n}(x))\big)\le M.
\]

Note that $\phi_E(B(x,r_n))$ is the component of $g^{-i_n}(\mathcal D_n)$ containing $\phi_E(x)$ and
\[
\deg\big(g^{i_n}:\phi_E(X_{i_n}(x))\to\phi_E(X_0)\big)
=
\deg\big(f^{i_n}:X_{i_n}(x)\to X_0\big)
\le D.
\]
Applying Lemma~\ref{lem:haissinsky} and taking the maximum of its constants over
$1\le d_n\le D$, we obtain a constant $C>0$, independent of $n$, such that
\[
\operatorname{Shape}\big(\phi_E(B(x,r_n)),\phi_E(x)\big)\le CM.
\]
Since $\bigcap_{k\ge1}\overline{X_k(x)}=\{x\}$, we have $\operatorname{diam}X_{i_n}(x)\to0$. Then $r_n\to0$ and hence $\underline{\mathcal H}(\phi_E,x)<\infty$.
\end{proof}

\begin{proof}[Proof of Proposition~\ref{prop:local_conjugacy}]
We verify that $\phi_E:\widehat{P}_0(E)\to \widehat{Q}_0(\xi(E))$ satisfies all the required conditions in Proposition~\ref{qc criterion}. The quasiconformality of $\phi_E$ on $\Omega_1$ guarantees (1). The condition~(2) follows directly from Lemmas~\ref{lem:zero_area_image_Omega2} and~\ref{lem:finite_lower_dilatation_Omega2}.
Condition~(3) does not occur because $\Omega_3=\emptyset$. Finally, $\Omega_4$ is countable, and so is $\phi_E(\Omega_4)$. Thus, $\operatorname{length}\bigl(\phi_E(\Omega_4)\bigr)=0$. Therefore, Proposition~\ref{qc criterion} implies that $\phi_E$ is quasiconformal on $\widehat{P}_0(E)$.
\end{proof}

\subsection{The case when $E_p$ is a singleton}
Recall that $E$ is a non-singleton fixed Julia component. Since the forward orbit of $E$ avoids $0$, $E$ is a quasicircle. The component $\xi(E)$ is also a quasicircle. For every $n\geq0$, $P_n(E)$ and $Q_n(\xi(E))$ are neighborhoods of $E$ and $\xi(E)$, respectively.

\begin{proposition}\label{prop:local_conjugacy_singleton}
	There exists a quasiconformal map $\phi_E:P_0(E)\to Q_0(\xi(E))$ such that $\phi_E$ is conformal on $\operatorname{Int}(\widehat E)$, $g\circ\phi_E=\phi_E\circ f$ on $P_1(E)$, and
	\[
	\phi_E=h
	\quad\text{on}\quad
	P_0(E)\setminus
	\big(
	\widehat E\cup
	\bigcup_{n\geq1}\bigcup_{P_n\in\mathcal B_n(E)}P_n
	\big).
	\]
\end{proposition}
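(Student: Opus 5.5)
We follow the construction of Proposition~\ref{prop:local_conjugacy}, which becomes simpler in this case. Since $E_p=\{0\}$ and the forward orbit of $E$ avoids $0$, property (P1)(b) together with the fact that $E$ is fixed shows that $P_0(E)$ and $P_1(E)$ are Jordan domains with $\overline{P_1(E)}\subset P_0(E)$. Indeed, their boundaries lie in $U\cup Z_{N+n}$. The points of $Z_{N+1}$ on $\partial P_1(E)$ cannot lie on $\partial P_0(E)$: otherwise, by (P1)(c), we would have $\partial P_1(E)\cap\partial P_0(E)\neq\emptyset$, and since $f\colon P_1(E)\to P_0(E)$ is a covering of the annular region around the fixed component $E$, the common point would be fixed by the dynamics, hence equal to $0$, which is impossible. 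I would double-check this last point. If it fails, I would reuse the thickening by the neighborhoods $W(a)$ from the previous subsection, pulling back $\phi_0$ from Lemma~\ref{Cui-Tan}.

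Assuming $\overline{P_1(E)}\subset P_0(E)$, Step~1 is done directly. On the closed annulus $\overline{P_0(E)}\setminus P_1(E)$ minus the pieces of $\mathcal B_1(E)$, set $\phi_E=h$. For each $P_1\in\mathcal B_1(E)$, its boundary is a Jordan curve in $U\cup Z_{N+1}$. Its points in $Z_{N+1}$ may be pinch points with other pieces, but only at critical preimages of $0$. Near each such point $z$, I would use the local conjugacy $\bigl(g^{N+1}|_{W(h(z))}\bigr)^{-1}\circ\phi_0\circ f^{N+1}$ on $W^*(z)$, exactly as before. Then I would extend the boundary values $h$ on $\partial P_1\setminus\bigcup W^*(z)$ and $\phi$ on $\partial W^*(z)\cap P_1$ by Ahlfors--Beurling to the remaining quasidisk, and glue using the removability of smooth arcs. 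This gives $\phi_E$ quasiconformal on $P_0(E)\setminus\overline{P_1(E)}$ with $\phi_E=h$ on every $\partial P_1$.

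For Step~2, pieces in $\mathcal B_{n+1}(E)$ map conformally under $f^n$ onto pieces of $\mathcal B_1(E)$, because $P_0(E)\setminus\widehat E$ contains no critical points. Lifting by $\bigl(g^n|_{Q_{n+1}}\bigr)^{-1}\circ\phi_E\circ f^n$ therefore gives a $K$-quasiconformal map on $P_0(E)\setminus\widehat E$ with the same $K$. It agrees with $h$ off $\widehat E\cup\bigcup\mathcal B_n(E)$ and conjugates $f$ to $g$ on $P_1(E)\setminus\widehat E$. For Step~3, the map extends to $E$ because the domains are doubly connected. It is defined on $\operatorname{Int}(\widehat E)$ via the Böttcher-type coordinates $\alpha,\beta$ and the rotation $\kappa$, exactly as before.

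The remaining issue is quasiconformality across $E$, and this is the main point. It is easier here than in the previous subsection: $E$ and $\xi(E)$ are quasicircles, so $\phi_E$ is quasiconformal on both sides of $E$ and is a homeomorphism across it. Quasicircles are quasiconformally removable (cf.~\cite{Lehto}), so $\phi_E$ is quasiconformal on $P_0(E)$.

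Alternatively, one can apply Proposition~\ref{qc criterion} with $\Omega_1=P_0(E)\setminus E$ and $\Omega_2=E$. Then $\operatorname{mes}(\xi(E))=0$, and $\underline{\mathcal H}<\infty$ on $E$ follows as in Lemma~\ref{lem:finite_lower_dilatation_Omega2}, using ray-cut pieces. The bounded degree condition there is immediate, since no orbit on $E$ approaches $0$ and no critical point lies near $E$.
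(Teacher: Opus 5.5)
Your overall plan (run Steps~1--3 of the non-singleton case on $P_0(E)$ without thickening, then use removability of the quasicircle $E$) is the paper's. However, the one step that is new in the case $E_p=\{0\}$ fails as you wrote it. You correctly note that points of $Z_{N+1}$ on $\partial P_1$, $P_1\in\mathcal B_1(E)$, can be pinch points, and so can the points $y_j$ on $\partial P_0(E)$. These are critical iterated preimages $a$ of $0$.

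At such an $a$ we have $\deg_a f^{N+1}>1$, so $0$ is a critical value of $f^{N+1}$. Consequently no shrinking of $W_0$ makes the inverse branches of $f^{N+1}$ conformal on $W_0$. Moreover, $\bigl(g^{N+1}|_{W(h(a))}\bigr)^{-1}$ does not exist, because $g^{N+1}$ is not injective on $W(h(a))$. So ``exactly as before'' gives no map near $a$. Without a single-valued quasiconformal map on a full neighborhood of $a$, the Ahlfors--Beurling extension and the gluing in Step~1 break down there.

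The missing idea is the paper's covering-lift argument:
\begin{enumerate}
\item Shrink $W_0$ so that $a$ and $h(a)$ are the only preimages of $0$ in $W(a)$ and $W(h(a))$. Then $f^{N+1}\colon W(a)\setminus\{a\}\to W_0\setminus\{0\}$ and $g^{N+1}\colon W(h(a))\setminus\{h(a)\}\to\phi_0(W_0)\setminus\{0\}$ are unbranched coverings.
\item These coverings have the same degree. Indeed, $a$ and all its forward images are singleton Julia components, and $\xi$ preserves degrees: $\deg_{E'}\sigma_f=\deg_{\xi(E')}\sigma_g$.
\item Hence $\phi_0$ lifts to a quasiconformal map $\phi_a$ on $W(a)\setminus\{a\}$. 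By $(*)$, the lift can be chosen to agree with $h$ on $\partial P_1\cap W(a)$.
\item The lift extends across the puncture by setting $\phi_a(a)=h(a)$.
\end{enumerate}
This single map $\phi_a$ is then used on every piece meeting at $a$.

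Your justification of $\overline{P_1(E)}\subset P_0(E)$ is also not valid. If $z\in\partial P_0(E)\cap\partial P_1(E)$, then $f(z)\in\partial P_0(E)$ since $f(\partial P_1(E))=\partial P_0(E)$. But nothing forces $f(z)\in\partial P_1(E)$, let alone $f(z)=z$. Fortunately the inclusion is not needed, and neither is any thickening. Since $E$ contains no iterated preimage of $0$, we have $\widehat E\subset P_n(E)$ for every $n$. Any common boundary points of $P_0(E)$ and $P_1(E)$ therefore lie outside the open domain $P_0(E)$. Step~1 works on $P_0(E)\setminus\overline{P_1(E)}$ whether or not it is a genuine annulus, and Steps~2--3 go through unchanged. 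This is why the paper only labels the points $y_j\in\partial P_0(E)\setminus\partial P_1(E)$. Your final step (removability of the quasicircle $E$) matches the paper; the alternative via the quasiconformal criterion is unnecessary.
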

\begin{proof}
Denote the points of $\big(\partial P_0(E)\setminus \partial P_1(E)\big)\cap f^{-N}(0)$ by $\{y_1, \dots, y_s\}$. The boundaries of the puzzle pieces in $\mathcal{B}_1(E)$ may also contain finitely many points in $Z_{N+1} \setminus Z_N$, which we denote by $\{z_1, \dots, z_m\}$. Thus every $y_j$ and $z_k$ is mapped to $0$ by $f^{N+1}$.

\vspace{0.2em}
As in Subsection 3.1.1, by Lemma~\ref{Cui-Tan}, there exists a neighborhood $W_0$ of $0$ and a quasiconformal map $\phi_0$ on $W_0\cup f(W_0)$ such that $\phi_0=h$ on $U_0\cap W_0$, which is a Jordan domain, and $\phi_0\circ f=g\circ\phi_0$ on $W_0$. By shrinking $W_0$ and still denoting it by $W_0$, we may assume that $W_0$ is a Jordan domain containing $0$ with a smooth boundary, which satisfies the conditions \eqref{eq:W0_critical}, \eqref{eq:phi0_critical} and \eqref{eq:phi0}.

For every $a\in\{y_1,\ldots,y_s,z_1,\ldots,z_m\}$, define
\[
W(a)=\operatorname{Comp}_a f^{-(N+1)}(W_0)
\quad \text{and} \quad 
W(h(a))=\operatorname{Comp}_{h(a)}g^{-(N+1)}\bigl(\phi_0(W_0)\bigr).
\]
After shrinking $W_0$ again, we may assume that these neighborhoods are pairwise disjoint, and that $a$ and $h(a)$ are the unique preimages of $0$ in $W(a)$ and $W(h(a))$, respectively. Hence 
\[
f^{N+1} : W(a) \setminus \{a\} \longrightarrow W_0 \setminus \{0\} \quad \text{and} \quad g^{N+1} : W(h(a)) \setminus \{h(a)\} \longrightarrow \phi_0(W_0) \setminus \{0\}
\]
are unbranched coverings.
	
Since $\{a\}$ and all its forward images are singleton Julia components, the degree correspondence for $\xi$ gives $\deg_a f^{N+1}=\deg_{h(a)}g^{N+1}$.
Thus the two coverings have the same degree. We can lift $\phi_0$ through $f^{N+1}$ and $g^{N+1}$ to obtain a quasiconformal map
\[
\phi_a:W(a)\setminus\{a\}
\longrightarrow
W(h(a))\setminus\{h(a)\}.
\]
We choose the lift $\phi_a$ such that $\phi_a = h$ on $\partial P_1 \cap W(a)$ with $P_1\in\mathcal B_1(E)$. This is guaranteed by the condition \eqref{eq:phi0}. Since $h(a)$ is the unique preimage of $0$ in $W(h(a))$, $\phi_a$ extends quasiconformally across $a$ by setting $\phi_a(a)=h(a)$.

Using the maps $\phi_a$ for $a\in\{y_1,\ldots,y_s,z_1,\ldots,z_m\}$, we repeat Steps~1--3 in Subsection 3.1.2, and thus obtain a homeomorphism
$\phi_E:P_0(E)\rightarrow Q_0(\xi(E))$ satisfying all the required properties in Proposition~\ref{prop:local_conjugacy_singleton} except for the quasiconformality on $P_0(E)$.

Since $E$ is a quasicircle, it is quasiconformally removable. Note that $\phi_E$ is quasiconformal on $P_0(E)\setminus E$. It follows that $\phi_E$ is quasiconformal on $P_0(E)$.
\end{proof}

\section{The global topological conjugacy and quasiconformal rigidity}
In this section, we first extend the local conjugacies constructed in Section~3 to a global topological conjugacy and then prove that this conjugacy is quasiconformal.

\subsection{The global topological conjugacy}
Fix a non-singleton strictly preperiodic Julia component $E_s$, and let $\ell\geq1$ be the smallest integer such that $f^{\ell}(E_s)=E$, where $E$ is a periodic Julia component. 

Suppose that $E_s$ is critical. Let $c$ and $\tilde c$ be the unique critical points in $\widehat{E_s}$ and $\widehat{\xi(E_s)}$, respectively.  Then
\[
f^{\ell}:\widehat{E_s}\setminus\{c\}\longrightarrow\widehat E\setminus\{f^{\ell}(c)\}\quad  \text{and}\quad
g^{\ell}:\widehat{\xi(E_s)}\setminus\{\tilde c\}
\longrightarrow\widehat{\xi(E)}\setminus\{g^{\ell}(\tilde c)\}
\]
are unbranched coverings of the same degree. Since $\phi_E(f^{\ell}(c))=g^{\ell}(\tilde c)$, we can lift $\phi_E$ through
$f^{\ell}$ and $g^{\ell}$. 

To determine the lift, we specify a marked point $a_s \in E_s$ and its corresponding point $b_s \in \xi(E_s)$ as follows: if $E_s$ contains a point in $\bigcup_{i\geq0}f^{-i}(0)$, we choose $a_s$ to be this point and define $b_s= h(a_s)$ (noting that $h$ is well-defined on $\bigcup_{i\geq0}f^{-i}(0)$);  otherwise, let $\gamma \subset U$ be a boundary arc of a ray-cut puzzle piece landing at a point in $E$. We choose $\gamma_s$ to be a component of $f^{-\ell}(\gamma)$ that lands at a point $a_s \in E_s$, and define $b_s$ to be the landing point of $h(\gamma_s)$ on $\xi(E_s)$. We then choose $\phi_{E_s}$ such that $\phi_{E_s}(a_s) = b_s$.

The point $c$ is a removable singularity for $\phi_{E_s}$. Extending the map by $\phi_{E_s}(c) = \tilde{c}$ gives a map $\phi_{E_s}: \widehat{E_s} \to \widehat{\xi(E_s)}$.

Now suppose that $E_s$ is non-critical. Let $\ell_1\geq1$ be the smallest integer such that $f^{\ell_1}(E_s)=E_1$ is a critical component. Since $f^{\ell_1}$ and $g^{\ell_1}$ are injective on $\widehat{E_s}$ and $\widehat{\xi(E_s)}$, respectively, we can lift the map $\phi_{E_1}$, which has already been constructed, to obtain the map $\phi_{E_s}:\widehat{E_s}\to\widehat{\xi(E_s)}$.

\begin{proposition}\label{top conj non-singleton}
Assume that $E_p$ is non-singleton. Then there exists a homeomorphism $H:\widehat{\mathbb{C}}\to\widehat{\mathbb{C}}$ such that $H$ is conformal on the Fatou set $F(f)$, $H\circ f=g\circ H$ on $\widehat{\mathbb{C}}$, and $H=h$ on $U$.
\end{proposition}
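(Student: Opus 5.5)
We define $H$ piecewise and then check continuity. Set $H=h$ on $U$. On the filling $\widehat{E}$ of each non-singleton periodic Julia component $E$ (up to replacing the maps by iterates along cycles), set $H=\phi_E$ from Proposition~\ref{prop:local_conjugacy}. On each non-singleton strictly preperiodic component, set $H=\phi_{E_s}$ as constructed above. For every other point we use the nested structure of $\widetilde{\mathcal P}_n$. Every $z\in\widehat{\mathbb C}\setminus U$ lies in a unique element $\widetilde P_n(z)$ of $\widetilde{\mathcal P}_n$ for each $n$; this follows from (P2) and (P7), with (P6) giving disjoint closures since $E_p$ is non-singleton. Moreover, $\bigcap_n\widetilde P_n(z)=\widehat{E}$ for the Julia component $E$ of $z$. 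If $E$ is a singleton $\{z\}$, the image sets $\widetilde\xi_n(\widetilde P_n(z))$ shrink to $\widehat{\xi(E)}$, and $\xi(E)$ is a singleton because $\xi$ preserves the structure of $\mathcal E_f$ (by \cite[Claim 4.3]{GGP}, non-singleton components correspond to non-singleton ones). We define $H(z)$ to be that point.

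The order of verification is as follows.
\begin{enumerate}
\item[(i)] $H$ is a bijection. This follows from the bijectivity of $\xi$ and of each $\phi_E$, and from the fact that the Fatou domains not in $U$ are exactly the interiors of the fillings $\widehat E$, which the preimages of $U$ do not meet. Fatou components other than $U$ are either preimages of $U$, which lie in $U$ since $U$ is completely invariant, or interiors of the fillings $\widehat E$.
\item[(ii)] The conjugacy $H\circ f=g\circ H$ holds. On $U$ it is the conjugacy property of $h$. On the periodic fillings it is the statement $g\circ\phi_E=\phi_E\circ f$; note that $\widehat E\subset P_1(E)$. On the preperiodic fillings it holds by the lifting construction; we must check that the marked points $a_s\mapsto b_s$ make the lifts compatible with $h$ along $\partial U$. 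At singleton components it follows from $\widetilde\xi_{n}$ commuting with the dynamics, i.e.\ $\xi\circ\sigma_f=\sigma_g\circ\xi$.
\item[(iii)] $H$ is conformal on $F(f)$. The maps $h$ and $\phi_E$ are conformal on $\operatorname{Int}\widehat E$, and the lifts through holomorphic coverings are again conformal.
\end{enumerate}

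\textbf{Continuity is the main obstacle.} At a point of a singleton component, continuity follows because points near $z$ lie in $\widetilde P_n(z)$ and are mapped into $\widetilde\xi_n(\widetilde P_n(z))$. This uses the compatibility $H(\widetilde P_n\cap U)=h(\widetilde P_n\cap U)=\widetilde\xi_n(\widetilde P_n)\cap V$ together with the fact that $H(\widehat{E'})\subset \widetilde\xi_n(\widetilde P_n)$ for each component $E'$ inside. The latter holds because $\xi(E')$ lies in the corresponding nest. The diameters of these image sets tend to $0$, which gives continuity.

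At a point of a non-singleton component $E$, we must show that $H$ agrees with the homeomorphism $\phi_E$ (or its lift) on a neighborhood of $\widehat E$, namely on $\widehat{P}_0(E)$ resp.\ its pullback. Proposition~\ref{prop:local_conjugacy} gives $\phi_E=h$ off $\bigcup\mathcal B_n(E)\cup\widehat E$. On each $P_n\in\mathcal B_n(E)$, both $\phi_E$ and $H$ are homeomorphisms onto the same piece $\xi_n(P_n)$, and they agree on $\partial P_n$. This does not make them equal, though. Hence we would redefine $H$ on $\bigcup_{n}\bigcup_{\mathcal B_n(E)}P_n\setminus U$ to be $\phi_E$ itself rather than the nest definition. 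We would then verify that $\phi_E$ maps each component of $\widehat{\mathbb C}\setminus U$ inside $P_n$ onto its $\xi$-image. This holds because $\phi_E$ is a homeomorphism coinciding with $h$ on $U\cap P_n$ and therefore respects the nested puzzle structure. With this, the two definitions are consistent. Continuity then reduces to continuity of $\phi_E$ together with the shrinking-nest argument at singletons.

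Finally, the strictly preperiodic pieces are handled by pulling back these neighborhoods via $f^\ell$. This uses the degree equality $\deg_{E}\sigma_f=\deg_{\xi(E)}\sigma_g$ to match the branches, and the marked points to select the branch compatible with $h$.
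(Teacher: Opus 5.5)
Your treatment of bijectivity, the conjugacy, conformality on $F(f)$, and continuity at singleton components matches the paper. The step at non-singleton periodic components, however, has a genuine gap.

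You propose to redefine $H$ on $\bigcup_{n}\bigcup_{P_n\in\mathcal B_n(E)}P_n\setminus U$ as $\phi_E$. Your justification is that $\phi_E$ coincides with $h$ on $U\cap P_n$ and so respects the puzzle structure. That premise is false: Proposition~\ref{prop:local_conjugacy} gives $\phi_E=h$ only \emph{off} the pieces of $\mathcal B_n(E)$. For $P_1\in\mathcal B_1(E)$, $\phi_E$ on $P_1\setminus\overline{\mathcal W^*(P_1)}$ is merely an Ahlfors--Beurling extension of the boundary values $\Psi$. It knows nothing about $h$ on $P_1\cap U$ or about the Julia components inside $P_1$, and the pieces of $\mathcal B_n(E)$ with $n\ge2$ carry pullbacks of these maps. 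Hence $\phi_E(P_n\cap U)\neq Q_n\cap V$ in general. Your hybrid map ($h$ on $P_n\cap U$, $\phi_E$ on $P_n\setminus U$) is then in general neither a bijection onto $Q_n$ nor continuous. It also need not satisfy $H\circ f=g\circ H$ on the pieces of $\mathcal B_1(E)$, where $\phi_E$ is not a conjugacy; it conjugates only on $P_1(E)$.

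The obvious repairs fail too. Redefining on $U$ as well would violate $H=h$ on $U$. Building $\phi_E$ to equal $H$ on these pieces presupposes the continuity you are trying to prove. Your strictly preperiodic step pulls back this neighborhood, so it inherits the same problem.

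The missing idea is that you only need $H$ and $\phi_E$ to be close near $\widehat E$, not equal. You already observed that for $x\in P_n\in\mathcal B_n(E)$, both $H(x)$ and $\phi_E(x)$ lie in $Q_n=\xi_n(P_n)$. The paper adds a hyperbolic-diameter bound in the annulus $A=\widehat Q_0(\xi(E))\setminus\widehat{\xi(E)}$: $\operatorname{diam}_{\rho_A}(Q_n)\le R:=\max\{\operatorname{diam}_{\rho_A}(Q_1)\mid Q_1\in\mathcal B_1(\xi(E))\}$. This comes from pulling back the pieces of $\mathcal B_1(\xi(E))$ under iterates of $g$, which are unbranched off $\widehat{\xi(E)}$, and applying the Schwarz--Pick lemma. $R$ is finite precisely because the thickening places $\overline{Q_1}$ inside $A$; without it, $\partial Q_1$ may touch $\partial Q_0(\xi(E))$ at the points $h(y_j)$.

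As $x\to x_0\in E$, $\phi_E(x)\to\phi_E(x_0)\in\partial A$, so the Euclidean diameter of $B_{\rho_A}(\phi_E(x),R)\ni H(x)$ tends to $0$. Off the $\mathcal B_n(E)$-pieces one has $H=h=\phi_E$ directly. The strictly preperiodic case then follows by pulling back small neighborhoods through $g^{\ell}$, with the branch fixed by the lifting construction of $\phi_{E_s}$.
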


\begin{proof}
For a singleton Julia component $\{x\}\in\mathcal E_f$, we identify $\xi(\{x\})$ with its unique point.
Define the global map $H \colon \widehat{\mathbb{C}} \to \widehat{\mathbb{C}}$ as
\[
H(x) = \begin{cases}
	\phi_{E_f}(x) & \text{if } x \in \widehat{E_f} \text{ for a non-singleton } E_f \in \mathcal{E}_f, \\
	\xi(\{x\}) & \text{if } \{x\} \in \mathcal{E}_f, \\
	h(x) & \text{if } x \in U.
\end{cases}
\]
By definition, $H$ is a bijection that conjugates $f$ to $g$, and it is conformal on $F(f)$. So we only need to prove its continuity on the Julia set. 

\begin{claim}\label{claim: boundary correspondence}
For every puzzle piece $P_n\in\mathcal{P}_n$, we have $H|_{\partial P_n}=h|_{\partial P_n}$, and then $H(\partial P_n)=\partial Q_n$.
\end{claim}
\begin{proof}
Let $P_n\in\mathcal{P}_n$ and set $\Lambda(P_n)=\partial P_n\cap f^{-(N+n)}(0)$. Since $\partial P_n\setminus\Lambda(P_n)\subset U$, we immediately have $H=h$ on $\partial P_n\setminus\Lambda(P_n)$. It remains to check the points in $\Lambda(P_n)$ (if any). Let $x\in\Lambda(P_n)$, and suppose that $f^j(x)=y\in E$ for a minimal $j\geq 0$. Then $y$ is also an iterated preimage of $0$. By Proposition~\ref{prop:local_conjugacy}, we have $\phi_E(y)=h(y)$.
 Since $E_p$ is non-singleton, the Julia component $E_x$ containing $x$ is also non-singleton. Consequently, $H(x)=\phi_{E_x}(x)=h(x)$ by the lifting construction of $\phi_{E_x}$. So $H|_{\partial P_n}=h|_{\partial P_n}$. Since $h(\partial P_n)=\partial Q_n$, the claim follows.
\end{proof}

We now verify that $H(x)\to H(x_0)$ as $x\to x_0$ with $x_0\in J(f)$, following the idea from \cite[Proposition 4.4]{CP}.
Suppose that $E=\{x_0\}\in\mathcal{E}_f$ is a singleton component. 
By the property of puzzle pieces, $\bigcap_{n\ge 0}\overline{P_n(E)}=E$, and $\operatorname{diam}(\overline{P_n(E)})\to 0$ as $n\to\infty$. Analogously for $g$, we have $\bigcap_{n\ge 0}\overline{Q_n(\xi(E))} = \xi(E) = \{H(x_0)\}$. Because $\xi(E)$ is also a singleton, $\operatorname{diam}(\overline{Q_n(\xi(E))})\to 0$ as $n\to\infty$.
Fix an integer $n\ge 0$. For any $x$ sufficiently close to $x_0$, we have $x\in \overline{P_n(E)}$. By Claim \ref{claim: boundary correspondence}, $H(x)\in \overline{Q_n(\xi(E))}$. It follows that $|H(x)-H(x_0)|\le \operatorname{diam}(\overline{Q_n(\xi(E))})$. Letting $n\to\infty$, we conclude that $H(x)\to H(x_0)$.

Suppose that $x_0$ lies on a non-singleton periodic component $E$. By Proposition~\ref{prop:local_conjugacy}, $\phi_E$ is quasiconformal on an open neighborhood $\widehat{P}_0(E)$ of $\widehat{E}$, such that $\phi_E = h$ on $\widehat{P}_0(E) \setminus \bigcup_{n \ge 1} \bigcup_{P_n \in \mathcal{B}_n(E)} P_n$. Therefore, as $x \to x_0$ within $\widehat{P}_0(E) \setminus \bigcup_{n \ge 1} \bigcup_{P_n \in \mathcal{B}_n(E)} P_n$, we have $H(x) = h(x) = \phi_E(x)$. Since $\phi_E$ is continuous on $\widehat{P}_0(E)$, it follows that $H(x) \to \phi_E(x_0) = H(x_0)$ along such paths.

\vspace{0.2em}
For $n\ge1$ and the puzzle $\mathcal Q_n$ of depth $n$ for $g$, set
\[
\mathcal B_n(\xi(E))
=
\{Q_n\in\mathcal Q_n\mid Q_n\subset Q_{n-1}(\xi(E))\ \text{and}\ Q_n\neq Q_n(\xi(E))\}.
\]
Similarly to \cite[Claim 4.2]{CP}, we claim that the hyperbolic diameter of each puzzle piece $Q_n$ in the thickened annulus $A = \widehat{Q}_0(\xi(E)) \setminus \widehat{\xi(E)}$ can be controlled.

\begin{claim}
For each $n \ge 1$ and $Q_n \in \mathcal B_n(\xi(E))$, we have $\operatorname{diam}_{\rho_A}(Q_n) \le R$, where $R = \max \{ \operatorname{diam}_{\rho_{A}}(Q_1) \mid Q_1\in\mathcal B_1(\xi(E)) \}$.
\end{claim}

If $x\in P_n$ for some $P_n\in\mathcal B_n(E)$ with $n\ge1$, then Proposition~\ref{prop:local_conjugacy} and Claim~\ref{claim: boundary correspondence} give
$H|_{\partial P_n}=\phi_E|_{\partial P_n}=h|_{\partial P_n}$. Hence $H(x)$ and $\phi_E(x)$ lie in the same puzzle piece $Q_n$. By the above claim, $H(x)$ always lies in the hyperbolic disk $B_{\rho_A}(\phi_{E}(x), R)$. As $x \to x_0$, $\phi_E(x)$ approaches $\phi_E(x_0) = H(x_0) \in \partial A$. This implies that the Euclidean diameter of the disk $B_{\rho_A}(\phi_E(x), R)$ containing $H(x)$ tends to zero. Thus, $H(x) \to \phi_E(x_0) = H(x_0)$.

\vspace{0.2em}
Let $x_0$ lie on a strictly preperiodic component $E_s$. Assume that $f^{\ell}(E_s)=E$ for some periodic component $E$, and set $y_0=f^{\ell}(x_0)$. By the periodic case proved above, $H$ is continuous at $y_0$.
Let $\mathcal U$ be an arbitrary neighborhood of $H(x_0)$. We choose a neighborhood $\mathcal V$ of $H(y_0)$ such that 
\[\mathcal U_0:=\operatorname{Comp}_{H(x_0)}g^{-\ell}(\mathcal V)\subset\mathcal U.\]  
Since $H(f^{\ell}(x))\to H(y_0)$, $g^{\ell}(H(x))=H(f^{\ell}(x))\in\mathcal V$ for all $x$ sufficiently close to $x_0$. By the definition of $\phi_{E_s}$, $H(x)$ belongs to the component $\mathcal{U}_0 \subset \mathcal{U}$ for all such $x$, which concludes that $H$ is continuous at $x_0$.
\end{proof}

\begin{proposition}\label{top conj singleton}
Assume that $E_p=\{0\}$. Then there exists a homeomorphism $H:\widehat{\mathbb{C}}\to\widehat{\mathbb{C}}$ such that $H$ is conformal on the Fatou set $F(f)$, $H\circ f=g\circ H$ on $\widehat{\mathbb{C}}$, and $H=h$ on $U$.
\end{proposition}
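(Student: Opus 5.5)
We follow the same scheme as Proposition~\ref{top conj non-singleton}. The new feature when $E_p=\{0\}$ is that puzzle pieces of the same depth may be pinched together at iterated preimages $z$ of $0$ with $\deg_z f>1$. We therefore work with the unions $\widetilde P_n(E)$ and $\widetilde Q_n(\xi(E))$ instead of single puzzle pieces.

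\textbf{Definition of $H$.} We set $H=h$ on $U$. For a singleton component $\{x\}\in\mathcal E_f$ we set $H(x)=\xi(\{x\})$. On $\widehat{E_f}$ for a non-singleton $E_f\in\mathcal E_f$ we set $H=\phi_{E_f}$. Here $\phi_E$ comes from Proposition~\ref{prop:local_conjugacy_singleton} for periodic $E$, and its lifts $\phi_{E_s}$ are the ones constructed above for strictly preperiodic $E_s$. The forward orbit of each non-singleton $E_f$ avoids $0$, so $E_f$ contains no iterated preimage of $0$, and the marked points $a_s,b_s$ are chosen via the ray-cut arcs $\gamma_s$. Since $\xi$ is a bijection commuting with $\sigma_f,\sigma_g$, the map $H$ is a bijection, and it satisfies $H\circ f=g\circ H$. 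It is conformal on $F(f)$: on $U$ because $H=h$, and on the interiors $\operatorname{Int}(\widehat{E_f})$ by construction. Every iterated preimage $z$ of $0$ is a singleton Julia component. By \cite[Claim~4.3]{GGP} together with (P4), $\xi(\{z\})=\{h(z)\}$, since $h(z)$ lies in $\bigcap_n\widetilde\xi_n(\widetilde P_n(\{z\}))$. Hence the analogue of Claim~\ref{claim: boundary correspondence} holds: $H=h$ on $\partial P_n$ and $H(\partial P_n)=\partial Q_n$ for every $P_n\in\mathcal P_n$. Consequently $H(\widetilde P_n(E))=\widetilde\xi_n(\widetilde P_n(E))$ whenever $E$ is a singleton. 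Indeed, points of $\overline{P_n}$ whose component is contained in $\overline{P_n}$ are sent by $H$ into $\overline{\xi_n(P_n)}$, because both $\xi$ and the nests $\widetilde P_n$ are compatible.

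\textbf{Continuity at singleton components.} Let $\{x_0\}\in\mathcal E_f$. By (P7), $\bigcap_n\widetilde P_n(\{x_0\})=\{x_0\}$. The same holds for $g$, giving $\bigcap_n\widetilde Q_n(\xi(\{x_0\}))=\{H(x_0)\}$, so the diameters of $\widetilde Q_n$ shrink to $0$. It remains to check that $\widetilde P_n(\{x_0\})$ is a neighborhood of $x_0$. This is clear if $x_0\notin\bigcup_i f^{-i}(0)$. If $x_0$ is an iterated preimage of $0$, it holds because all $m=\deg$ pieces pinched at $x_0$ are included, together with a small sector of $U$ between them. For that sector we use that $U_{N+n}$ near $x_0$ is a preimage of the attracting flower, and $H=h$ there is continuous at $x_0$ by Carathéodory. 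With this, the argument of Proposition~\ref{top conj non-singleton} goes through verbatim.

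\textbf{Continuity at non-singleton components.} For a periodic $E$, the component $E$ is disjoint from the pinching points, and $P_0(E)$ is a genuine neighborhood of $\widehat E$ (P6 fails globally, but not near $E$). We reuse the argument of Proposition~\ref{top conj non-singleton}: the bound on the hyperbolic diameter of $Q_n\in\mathcal B_n(\xi(E))$ in $A=Q_0(\xi(E))\setminus\widehat{\xi(E)}$, together with $H=\phi_E=h$ on $\partial P_n$. The one change is that a piece $P_n\in\mathcal B_n(E)$ may now be pinched to other pieces of depth $n$ at points of $\partial P_n$. Those extra pieces lie outside $P_0(E)$ or are themselves in $\mathcal B_n(E)$, and the lifting in Step~2 still maps $P_{n+1}$ conformally onto $\mathcal B_1$-pieces, so the hyperbolic-diameter claim is unchanged. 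Strictly preperiodic components are handled by pulling back neighborhoods through $g^\ell$, exactly as before.

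\textbf{Expected obstacle.} The main obstacle is continuity of $H$ at iterated preimages $z$ of $0$ approached from inside the Julia set, that is, showing that the $m$ pinched pieces around $z$ are matched cyclically in the correct order by $h$. This matching is what guarantees that $H$ maps the neighborhood $\widetilde P_n(\{z\})$ to $\widetilde Q_n(\{h(z)\})$ rather than mixing pieces. I would derive it from the fact that $h$ is an orientation-preserving homeomorphism on $\partial U_{N+n}$, which preserves the cyclic order of the boundary arcs at $z$, combined with the lifting through the local branched coverings used in the construction of $\phi_a$.
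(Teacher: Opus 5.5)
Your proposal is correct and follows the paper's proof: you define $H$ piecewise, get $H=h$ on every $\partial P_n$ from $\xi(\{z\})=\{h(z)\}$ at iterated preimages of $0$, handle a pinched point $x_0$ by splitting a small neighborhood into $\mathcal N\cap U_{N+n}$ (Carath\'eodory continuity of $h$) and $\mathcal N\setminus U_{N+n}\subset\widetilde P_n(\{x_0\})$ (shrinking diameters of $\widetilde Q_n$), and reuse the periodic and lifting arguments for non-singleton components. Two small remarks: $\widetilde P_n(\{x_0\})$ is not itself a neighborhood of $x_0$, because it excludes the petals of $U_{N+n}$ at $x_0$, though your Carath\'eodory remark already handles those petals; and your anticipated cyclic-order obstacle does not arise, since $H(\overline{P_n^{(j)}})\subset\overline{\xi_n(P_n^{(j)})}\subset\widetilde Q_n(\xi(\{x_0\}))$ for every $j$ and only the diameter of the whole union matters.
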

\begin{proof}
Define
	\[
	H(x)=
	\begin{cases}
		\phi_{E_f}(x), & x\in\widehat{E_f}\text{ for a non-singleton }E_f\in\mathcal E_f,\\
		\xi(\{x\}), & \{x\}\in\mathcal E_f,\\
		h(x), & x\in U.
	\end{cases}
	\]
By definition, $H$ is a bijection satisfying $H\circ f=g\circ H$, and it is conformal on $F(f)$. It remains to prove the continuity of $H$ on the Julia set.

For every $P_n\in\mathcal P_n$, we have $H|_{\partial P_n}=h|_{\partial P_n}$. Indeed, this is immediate away from $f^{-(N+n)}(0)$, while for every $x\in\partial P_n\cap f^{-(N+n)}(0)$, we have $H(x)=\xi(\{x\})=h(x)$ by definition.

\vspace{0.2em}
Suppose that $E=\{x_0\}$ with $x_0\notin\bigcup_{i\geq0}f^{-i}(0)$. Then $\{P_n(x_0) \mid n\geq 0\}$ forms an open neighborhood basis of $x_0$.
The argument for singleton Julia components in the proof of Proposition~\ref{top conj non-singleton} shows that $H$ is continuous at $x_0$.

Now let $E=\{x_0\}$ with $x_0\in\bigcup_{i\geq0}f^{-i}(0)$. Take $\varepsilon>0$. Choose $n\geq0$ sufficiently large such that $\operatorname{diam}\widetilde{Q}_n(\xi(E))<\varepsilon$. Since $h:U_{N+n}\to V_{N+n}$ extends continuously to the boundary and $H(x_0)=h(x_0)$, we can choose a small neighborhood $\mathcal N$ of $x_0$ such that
\[\mathcal N\setminus U_{N+n}\subset\widetilde{P}_n(E)\quad \text{and} \quad |h(x)-h(x_0)|<\varepsilon\quad\text{for every }x\in\mathcal N\cap U_{N+n}.\] 
If $x\in\mathcal N\cap U_{N+n}$, then $H(x)=h(x)$, and hence $|H(x)-H(x_0)|<\varepsilon$. If $x\in\mathcal N\setminus U_{N+n}$, then $x\in\widetilde{P}_n(E)$ and thus $H(x)\in\widetilde{Q}_n(\xi(E))$. Since $H(x_0)\in\xi(E)\subset\widetilde{Q}_n(\xi(E))$, we have \[|H(x)-H(x_0)|\leq\operatorname{diam}\widetilde{Q}_n(\xi(E))<\varepsilon.\] Therefore $H(x)\to H(x_0)$ as $x\to x_0$.

Finally, we consider points on non-singleton Julia components. For periodic components, the continuity argument in the proof of Proposition~\ref{top conj non-singleton} still works, replacing $\widehat{P}_0(E)$ and $\widehat{Q}_0(\xi(E))$ by $P_0(E)$ and $Q_0(\xi(E))$, respectively. The strictly preperiodic case follows from the lifting argument in that proof.
\end{proof}

\subsection{Quasiconformal rigidity}
In this subsection, we give a partition of the Julia set following the terminology and notation in \cite{YZ} and verify the quasiconformal criterion (Proposition~\ref{qc criterion}), thereby proving Proposition~\ref{qc conj}.

Let
\[\operatorname{Crit}:=
\Big\{c\in\widehat{\mathbb{C}}\setminus
\big(U\cup\bigcup_{i\geq0}f^{-i}(0)\big)
\mid
c\text{ is a critical point of }f \Big\}.
\]
Set $\mathcal X_f:=\widehat{\mathbb{C}}\setminus\big(U\cup\bigcup_{i\geq0}f^{-i}(0)\big)$. For any $x\in\mathcal X_f$ and $n\geq0$, let $P_n(x)$ denote the unique depth-$n$ puzzle piece containing $x$. For $x,y\in\mathcal X_f$, we write $x\to y$ if for every $n\geq0$, there exists an integer $j\geq1$ such that $f^j(x)\in P_n(y)$. Clearly, $x\to y$ and $y\to z$ imply $x\to z$.
Let $\operatorname{Forw}(x)=\{y\in\mathcal X_f\mid x\to y\}$.
This relation naturally induces an equivalence relation on $\operatorname{Crit}$. For $c_1, c_2 \in \operatorname{Crit}$, we define
$$
c_1 \sim c_2 \iff c_1 = c_2 \quad \text{or}\quad (c_1 \to c_2 \text{ and } c_2 \to c_1).
$$
Let $[c]$ denote the equivalence class of $c$. 

For a critical point $c$ with $c\to c$ and any $c_1,c_2\in[c]$, we say that $P_{n+k}(c_1)$ is a \textbf{child} of $P_n(c_2)$ if
$$
f^k(P_{n+k}(c_1)) = P_n(c_2) \quad \text{and} \quad f^{k-1} \colon P_{n+k-1}(f(c_1)) \to P_n(c_2) \text{ is conformal.}
$$
A critical point $c$ is \textbf{persistently recurrent} if for all $n \ge 0$ and all $c' \in [c]$, $P_n(c')$ has finitely many children. Otherwise, $c$ is \textbf{reluctantly recurrent}.
Based on their recurrence properties, we define the following subsets of $\operatorname{Crit}$.
\begin{align*}
	&\operatorname{Crit_n} =  \big\{ c \in \operatorname{Crit} \mid c \not\to c' \text{ for any } c' \in \operatorname{Crit}  \big\}, \\
	&\operatorname{Crit_p} = \big \{ c \in \operatorname{Crit} \mid c \to c \text{ and } c \text{ is persistently recurrent}  \big\},\\
	&\operatorname{Crit_r} = \big \{ c \in \operatorname{Crit} \mid c \to c \text{ and } c \text{ is reluctantly recurrent} \big\}, \\
	&\operatorname{Crit_{en}} = \big\{ c \in \operatorname{Crit} \mid c \not\to c \text{ and } \exists c' \in \operatorname{Crit_n} \text{ such that } c \to c'  \big\}, \\
	&\operatorname{Crit_{ep}} =  \big\{ c \in \operatorname{Crit} \mid c \not\to c \text{ and } \exists c' \in \operatorname{Crit_p} \text{ such that } c \to c'  \big\}, \\
	&\operatorname{Crit_{er}} =  \big\{ c \in \operatorname{Crit} \mid c \not\to c \text{ and } \exists c' \in \operatorname{Crit_r} \text{ such that } c \to c'  \big\}.		
\end{align*}
Then $\operatorname{Crit} = \operatorname{Crit_n} \cup \operatorname{Crit_p} \cup \operatorname{Crit_r} \cup \operatorname{Crit_{en}} \cup \operatorname{Crit_{ep}} \cup \operatorname{Crit_{er}}$.

We divide the persistently recurrent critical points into two subsets. Set
\[
\operatorname{Crit_{p1}}:=\left\{c\in\operatorname{Crit_p}\mid c\text{ is periodic}\right\},
\qquad
\operatorname{Crit_{p2}}:=\operatorname{Crit_p}\setminus\operatorname{Crit_{p1}}.
\]
Let $\operatorname{Crit}(x) = \{ c \in \operatorname{Crit} \mid x \to c \}$. For any subscript index $\upnu\in\{\text{n, p, r, en, ep, er, p1, p2}\}$, we denote
$\operatorname{Crit}_\upnu(x)=\operatorname{Crit}(x)\cap\operatorname{Crit}_\upnu$.

Define $J_0$ to be the union of all singleton Julia components contained in $\bigcup_{i\geq0}f^{-i}(0)$ (possibly empty), and denote by $J_1$ the union of all non-singleton Julia components.
The remaining singleton Julia components are classified into
\begin{align*}
	J_2 &= \big\{ x\in J(f)\setminus(J_0\cup J_1) \mid \operatorname{Crit}(x)=\emptyset \text{ or } \operatorname{Crit_n}(x)\cup\operatorname{Crit_r}(x)\neq\emptyset \big\},\\
	J_3 &= \big\{ x\in J(f)\setminus(J_0\cup J_1) \mid \operatorname{Crit}(x) = \operatorname{Crit_p}(x)\cup\operatorname{Crit_{ep}}(x) \text{ and } \operatorname{Crit_{ep}}(x)\neq\emptyset \big\},\\
	J_4 &= \big\{ x\in J(f)\setminus(J_0\cup J_1) \mid \operatorname{Crit}(x)=\operatorname{Crit_p}(x)\neq\emptyset \big\}.
\end{align*}
We further define
\[J_{41}=\{x\in J_4\mid \operatorname{Crit}(x)=\operatorname{Crit_{p1}}(x)\neq\emptyset\},
\qquad
J_{42}=J_4\setminus J_{41}.\]
By construction, these sets give the following decomposition of the Julia set:
\[
J(f)=J_0\sqcup J_1\sqcup J_2\sqcup J_3\sqcup J_{41}\sqcup J_{42}.
\]
We define analogous subsets of $J(g)$, such that
\[ J(g) = \widetilde J_0 \sqcup\widetilde J_1 \sqcup\widetilde J_2 \sqcup\widetilde J_3 \sqcup\widetilde J_{41} \sqcup\widetilde J_{42}. \]
Since $H$ is a global topological conjugacy, it is easy to verify that
$H(J_i)=\widetilde J_i$ for $i\in\{0,1,2,3,41,42\}$.

\begin{proposition}\label{mes(J_1)=0}
	$\operatorname{mes}(J_1) = 0$.
\end{proposition}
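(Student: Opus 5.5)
The plan is to reduce $\operatorname{mes}(J_1)=0$ to showing that each non-singleton Julia component has zero area, and then use countability. Every non-singleton Julia component of the simple parabolic map $f$ is a Jordan curve bounding an eventually superattracting Fatou domain. Since $f$ has only countably many Fatou domains, $J_1$ is a countable union of such components. By countable subadditivity of Lebesgue measure, it suffices to show $\operatorname{mes}(E_s)=0$ for each non-singleton component $E_s\in\mathcal E_f$.

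First we handle periodic components. Let $E$ be a non-singleton periodic component, and pass to an iterate $f^p$ so that $E$ is fixed. This is harmless, since $E$ is still a non-singleton fixed Julia component of $f^p$. We then argue exactly as in Lemma~\ref{lem:zero_area_image_Omega2}, now applied to $f$ instead of $g$.

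\begin{itemize}
\item McMullen's theorem gives a rational map $R$ and a quasiconformal $\varphi$ with $\varphi(E)=J(R)$ and $\varphi\circ f^p=R\circ\varphi$ on $E$.
\item Since $f$ is simple parabolic, $E$ contains no critical points of $f^p$. Each critical point lies in $U$ or in the interior of a filled component (the superattracting domain) or on singleton components. Hence $J(R)$ contains no critical points of $R$, so $R$ is geometrically finite.
\item By \cite[Corollary 6.2]{McM00}, either $J(R)=\widehat{\mathbb C}$ or $\operatorname{mes}(J(R))=0$. The first is impossible because $J(R)=\varphi(E)$ is a Jordan curve.
\item Quasiconformal maps preserve null sets, so $\operatorname{mes}(E)=0$.
\end{itemize}

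When $0\notin$ the orbit of $E$, one could alternatively note that $E$ is a quasicircle and hence has zero area. The McMullen argument covers both cases uniformly.

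Next we handle strictly preperiodic components. Let $E_s$ be non-singleton and strictly preperiodic, with $f^{\ell}(E_s)=E$ periodic. The map $f^\ell$ is a rational map, hence locally Lipschitz on $\widehat{\mathbb C}$, and has finitely many critical points. Therefore $E_s\subset f^{-\ell}(E)$. We cover $E_s$ minus finitely many critical points by countably many small disks on which $f^\ell$ is bi-Lipschitz onto its image. Since $\operatorname{mes}(E)=0$, each piece $E_s\cap D$ has measure zero, because a local inverse branch is Lipschitz and maps null sets to null sets. Equivalently, preimages of null sets under nonconstant holomorphic maps are null. Thus $\operatorname{mes}(E_s)=0$.

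The main point requiring care is the geometric finiteness of $R$, which relies on the critical point claim in the periodic step. Critical points of $R$ in $J(R)$ would have to come from critical points of $f^p$ on $E$, and this is exactly what the definition of simple parabolic map excludes. Parabolic points of $R$ (coming from $0\in E$) are allowed in geometric finiteness, so they cause no problem. Everything else is routine, and combining the two cases over the countably many components gives $\operatorname{mes}(J_1)=0$.
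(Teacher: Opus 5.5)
Your proof is correct and follows the paper's argument: the periodic non-singleton components have zero area by McMullen's theorem plus geometric finiteness (as in Lemma~\ref{lem:zero_area_image_Omega2}), the strictly preperiodic ones are handled by pulling back under $f^{\ell}$, and countability finishes. The only cosmetic difference is that you run the McMullen argument for every periodic component, whereas the paper uses it only when $E_p$ is non-singleton and otherwise (when $E_p=\{0\}$) invokes the quasicircle property.
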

\begin{proof}
If $E_p$ is non-singleton, then Lemma~\ref{lem:zero_area_image_Omega2} implies that every non-singleton periodic Julia component has zero Lebesgue measure. If $E_p=\{0\}$, every non-singleton periodic Julia component is a quasicircle and hence also has zero Lebesgue measure.

Let $E_s$ be a non-singleton strictly preperiodic Julia component. Then there exists an integer $\ell\geq 1$ such that $f^{\ell}(E_s)=E$, where $E$ is periodic. Since $\operatorname{mes}(E)=0$ and non-constant holomorphic maps pull back sets of zero Lebesgue measure to sets of zero Lebesgue measure, we have $\operatorname{mes}(E_s)=0$. The countability of the non-singleton Julia components then implies $\operatorname{mes}(J_1)=0$.
\end{proof}

\begin{lemma}\label{bounded degree}
For each $x \in J_2 \cup J_3 \cup J_{41}$, the map $f$ satisfies the bounded degree condition at $x$ with respect to $\{P_n(x)\}_{n\geq 1}$.
\end{lemma}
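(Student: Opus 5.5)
We split into the three cases defining $J_2$, $J_3$ and $J_{41}$. The common tool is the degree chain rule: $\deg(f^{k}:P_{n+k}(x)\to P_n(f^k(x)))$ is the product of the local degrees along the orbit, and a factor $>1$ occurs only when $P_{n+k-j}(f^j(x))$ contains a critical point. Each puzzle piece contains at most one critical point, and there are finitely many pieces at each depth. Hence it suffices to find infinitely many times $i$ at which $P_{i}(x)$ maps onto a fixed piece $\Delta_0$ and the orbit meets critical pieces only a bounded number of times. Critical points in $U$ or in $\bigcup_i f^{-i}(0)$ cause no problem: points of $\mathcal X_f$ never enter $U$, and critical points on $J_0$ or in $U$ are eventually separated by deep pieces, since these pieces are disjoint from $\overline{U_{N+n}}$.

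\emph{Case $\operatorname{Crit}(x)=\emptyset$.} For each $c\in\operatorname{Crit}$ there is a depth $n_c$ such that $f^j(x)\notin P_{n_c}(c)$ for all $j\ge1$. Take $n_0=\max_c n_c$. Then for $j\ge1$ the pieces $P_{n_0}(f^j(x))$ contain no critical point of $\operatorname{Crit}$. The argument of Lemma~\ref{lem:bounded_degree_ray_cut} then applies verbatim: $\deg(f^{n_0+n}:P_{n_0+n}(x)\to P_0(f^{n_0+n}(x)))\le \deg(f|_{P_{n_0}(x)})\cdot\deg(f^{n_0}|_{P_{n_0}(f^n(x))})\le D$. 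Passing to a subsequence fixes the target piece.

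\emph{Case $\operatorname{Crit_n}(x)\cup\operatorname{Crit_r}(x)\neq\emptyset$.}
\begin{itemize}
\item If $x\to c$ with $c\in\operatorname{Crit_n}$, choose $n_1$ such that $f^j(c)$ avoids all $P_{n_1}(c')$ for $j\ge1$. Take times $j$ when $f^j(x)\in P_{n_1+m}(c)$ with $m$ large. Pulling back $P_{n_1}(c)$ passes through $c$ once, and then follows the critical-point-free orbit of $c$. We then bound the degree of $f^{j}:P_{n_1+m+j}(x)\to P_{n_1+m}(c)$ by an inductive first-entry argument: each earlier critical visit by the orbit of $x$ contributes a factor only if that critical point is also visited by $x$, and the number of visits to $\bigcup_{c'}P_{n_1}(c')$ before a landing is controlled by choosing the landing time as the \emph{first} entry into a deeper piece.
\item For $c\in\operatorname{Crit_r}(x)$, reluctant recurrence gives some $c'\in[c]$ and depth $n$ such that $P_n(c')$ has infinitely many children. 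These are pullbacks of degree $\deg_{c'}f$ along univalent branches. We use the standard argument from \cite{YZ,KSS}: combining first-entry maps into $P_n(c')$ from $x$ with the children produces infinitely many times $i$ with $\deg(f^{i}:P_{n+i}(x)\to P_n(c'))$ bounded by a constant depending only on the local degrees and the number of critical points.
\end{itemize}

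\emph{Cases $J_3$ and $J_{41}$.}
\begin{itemize}
\item For $x\in J_3$, there is $c\in\operatorname{Crit_{ep}}(x)$ with $c\not\to c$. We treat $c$ like a non-recurrent critical point along the orbit of $x$: after some depth, the orbit of $c$ never returns near $c$. The first-entry argument then controls the degrees along returns of the orbit of $x$ to neighbourhoods of $c$.
\item For $x\in J_{41}$, all critical points accumulated by $x$ are periodic. If $c$ has period $p$, then $f^p:P_{n+p}(c)\to P_n(c)$ has degree bounded by the product of local degrees along the periodic cycle. Choosing the times $i$ when $f^i(x)$ first enters $P_n(c)$ minus $P_{n+p}(c)$ gives pullbacks in which each critical point of the cycle is passed a bounded number of times. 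The orbit then leaves through the annulus-type region, since $x$ is not in the grand orbit of $c$.
\end{itemize}

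The main obstacle is the first-entry bookkeeping in the reluctantly recurrent and $\operatorname{Crit_{ep}}$ subcases. Here one must ensure that the chosen times do not pass many times through critical pieces. I would follow \cite[Section 3]{YZ} closely, adapting it to parabolic puzzle pieces, which are still nested Jordan domains by (P1).
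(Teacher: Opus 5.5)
\textbf{Verdict: gap in the $J_{41}$ case.} Your treatment of $J_2\cup J_3$ is at the level of the paper, which simply invokes \cite[Lemma~7]{YZ}. Your subcases $\operatorname{Crit}(x)=\emptyset$, $\operatorname{Crit_n}(x)\neq\emptyset$ and $\operatorname{Crit_r}(x)\neq\emptyset$ are the standard ones.

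One remark on $J_3$: your sentence leaves out what actually bounds the degree. Pulling back a fixed-depth piece $P_{n_1}(c)$, with $c\in\operatorname{Crit_{ep}}(x)$, from any visit time has bounded degree for two reasons. The only recurrent critical points in $\operatorname{Crit}(x)$ are persistently recurrent, so they do not accumulate on $c$. The non-self-recurrent critical points are passed at most once. A first-entry argument is not what does the work there.

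The genuine gap is in $J_{41}$, which is exactly where the paper does new work.

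\textbf{What is missing.} You never use what a periodic point of $\operatorname{Crit}$ is. It is the centre of a superattracting domain $\operatorname{Int}(\widehat E)$ bounded by a non-singleton periodic component $E$, say of period $p$. Hence $P_n(c)=P_n(E)$ shrinks to $\widehat E$, not to $c$, and the orbit of $x$ may stay in $P_n(E)\setminus\widehat E$ for arbitrarily many periods. The bounded degree of $f^p:P_{n+p}(c)\to P_n(c)$ does not help: pulling $P_n(E)$ back along $k$ periods costs a factor exponential in $k$.

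The missing ingredient is this: for some $N_0$, $f^j(P_{j+N_0}(E)\setminus\widehat E)$ contains no critical points for $0\le j<p$. Consequently $f^p$ maps every puzzle piece in $A_{m+p}=P_{m+p}(E)\setminus\overline{P_{m+p+1}(E)}$ conformally into $A_m$. This is what makes the cycle's critical points be passed zero times along the long stay near $E$. Your phrase ``passed a bounded number of times'' asserts this without any mechanism.

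\textbf{Two further problems with the stated steps.}
\begin{itemize}
\item Your choice of times does not work as written. First entry into $P_n(c)\setminus P_{n+p}(c)$ with $n$ fixed gives a single time. If $n$ varies, the image sits at depth about $n$, and you have no degree bound down to depth $0$ unless you push it out conformally.
\item The orbit leaves the nest because $x$ is a singleton component, so $f^{l}(x)\notin\widehat E=\bigcap_m\overline{P_m(E)}$. It is not because $x$ avoids the grand orbit of $c$: points of $E$ also avoid it, and they never leave.
\end{itemize}

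\textbf{How the paper assembles it.}
\begin{itemize}
\item Take the first entry time $l_n$ of $x$ into $P_n(E)$; this has degree at most $\delta^b$.
\item Take the maximal $s_n$ with $f^{l_n}(P_{l_n+n+s_n}(x))=P_{n+s_n}(E)$.
\item Push out conformally by $f^{t_n}$, with $t_n\in p\mathbb{Z}_{\ge0}$, to depth in $[N_0,N_0+p)$.
\item Take at most $N_0+p$ further steps down to depth $0$.
\end{itemize}
This gives degree at most $\delta^{b+N_0+p}$ at the depths $i_n=l_n+n+s_n+1\to\infty$.
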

\begin{proof}
For $x \in J_2 \cup J_3$, the proof is the same as that of \cite[Lemma 7]{YZ}. 

For $x\in J_{41}$, choose $c_0\in\operatorname{Crit}(x)$. By the definition of $J_{41}$, we know that $c_0$ is periodic. Then $c_0 \in \widehat{E}$ for a non-singleton periodic component $E\in\mathcal{E}_f$. Let the period of $E$ be $p$.
	
Since $x \to c_0$, for any integer $n \ge 1$, there exists a minimal integer $l_n \ge 0$ such that $f^{l_n}(x) \in P_n(E)$. Note that $P_{l_n+n}(x) = \operatorname{Comp}_x(f^{-l_n}(P_n(E)))$. We then have 
\[\deg(f^{l_n} \colon P_{l_n+n}(x) \to P_n(E)) \le \delta^b,\]
where $\delta = \max_{c \in \operatorname{Crit}} \deg_c f$ and $b = \#\operatorname{Crit}$.

\vspace{0.2em}
For each $n \ge 1$, let $A_n = P_n(E) \setminus \overline{P_{n+1}(E)}$ (which need not be an annulus). There exists an integer $N_0 \ge 1$ such that for all $0 \le j < p$, $f^j( P_{j+N_0}(E) \setminus \widehat{E} )$ contains no critical points.
	
\begin{claim}\label{claim: conformal iterate}
For every $n \ge N_0$ and each puzzle piece $I \subset A_n$, there exists an iterate of $f$ that maps $I$ conformally into the target union $\bigcup_{j=0}^{p-1} A_{N_0+j}$.
\end{claim} 

\begin{proof}
	If $N_0 \le n < N_0+p$, then $I$ already lies in the target union. 
	
	For $n \ge N_0+p$, it suffices to show that $f^p$ maps any piece $I \subset A_n$ conformally into $A_{n-p}$. 
	For any $0 \le m < p$, we have
	\[
	f^m(I) \subset f^m(P_n(E) \setminus \overline{P_{n+1}(E)}) \subset f^m(P_{N_0+m}(E) \setminus \widehat{E}).
	\]
	By the choice of $N_0$, these sets contain no critical points. Therefore, $f^p|_I$ is conformal and $f^p(I) \subset A_{n-p}$. 
	After iterating this reduction $k$ times, where $k\ge 1$ is uniquely determined by $n-kp\in [N_0,N_0+p-1]$, any $I\subset A_n$ is conformally mapped into $A_{n-kp}$. 
\end{proof}	
	
Fix an integer $n \ge N_0$. Since $f^{l_n}(P_{l_n+n}(x)) = P_n(E)$, there exists a unique integer $s_n \ge 0$ such that 
\[ f^{l_n}(P_{l_n+n+s_n}(x))=P_{n+s_n}(E) \quad \text{but} \quad f^{l_n}(P_{l_n+n+s_n+1}(x)) \neq P_{n+s_n+1}(E). \] Then $f^{l_n}(P_{l_n+n+s_n+1}(x)) \subset A_{n+s_n}$. 

\vspace{0.2em}
Choose $t_n \in p\mathbb{Z}_{\geq 0}$ such that $N_0 \le n+s_n-t_n < N_0+p$. By Claim~\ref{claim: conformal iterate}, $f^{t_n}$ maps each piece in $A_{n+s_n}$ conformally into the target union. Letting $i_n = l_n + n + s_n + 1$, we obtain the following diagram:
\[
\begin{tikzcd}[column sep=large]
	P_{i_n}(x)
	\arrow[r, "f^{l_n}", "\deg \le \delta^b"']
	& f^{l_n}(P_{i_n}(x))
	\arrow[r, "f^{t_n}", "\deg = 1"']
	& f^{l_n+t_n}(P_{i_n}(x))
	\arrow[r, "f^{\,i_n-l_n-t_n}", "\deg \le \delta^{N_0+p}"']
	& f^{i_n}(P_{i_n}(x)).
\end{tikzcd}
\]
Passing to a subsequence, we may assume that $f^{i_n}(x)$ always lands in the same depth-$0$ puzzle piece $P_0$. Consequently, we have $\deg(f^{i_n}:P_{i_n}(x)\to P_0)\leq \delta^{b+N_0+p}$.
\end{proof}

\begin{proposition}\label{mes=0}
$\operatorname{mes}(J_2\cup J_3\cup J_{41})=0$.
\end{proposition}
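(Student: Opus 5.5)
We will prove that $\operatorname{mes}(J_2\cup J_3\cup J_{41})=0$ by a Lebesgue density argument driven by the bounded degree condition of Lemma~\ref{bounded degree}. This is the standard scheme of \cite{YZ} and \cite{Zhai}.

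Suppose, for contradiction, that $\operatorname{mes}(J_2\cup J_3\cup J_{41})>0$, and take a Lebesgue density point $x$ of this set. We may assume $x$ avoids the countable set $\bigcup_{i\ge0}f^{-i}(0)$ and all critical orbits. Lemma~\ref{bounded degree} supplies a fixed depth-$0$ puzzle piece $P_0$, a sequence $i_n\to\infty$ and a constant $D$ with $\deg(f^{i_n}:P_{i_n}(x)\to P_0)\le D$. As in the proof of Lemma~\ref{lem:finite_lower_dilatation_Omega2}, we pass to a subsequence and refine so that $f^{i_n}(x)$ lies in a fixed piece $P_2\Subset P_0$. If $f^{i_n}(x)$ accumulates at a point of $\bigcup f^{-i}(0)$ on $\partial P_0$, we use the analogue of Lemma~\ref{lem:parabolic_escape}, or deepen the target by a bounded number of levels. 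This costs only a bounded increase in degree.

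Since $\operatorname{mod}(P_0\setminus\overline{P_2})$ is fixed and positive and the degree is at most $D$, Lemma~\ref{lem:haissinsky} together with the circle construction of \cite[Lemma 4]{YZ} gives the following. There are round disks $B(x,r_n)\subset P_{i_n}(x)$ with $r_n\to0$, such that $f^{i_n}(B(x,r_n))$ has bounded shape, contains a definite disk around $f^{i_n}(x)$, and lies in the compact set $\overline{P_2}$. Moreover, $f^{i_n}$ has bounded distortion in the sense of degree-$D$ branched coverings. Concretely, by Koebe applied to the Blaschke product models $F_n$ in the claim in the proof of Lemma~\ref{lem:finite_lower_dilatation_Omega2}, relative area is distorted by at most a bounded power.

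Because $x$ is a density point, the relative measure of $(J_2\cup J_3\cup J_{41})^c$ in $B(x,r_n)$ tends to $0$. By the distortion control, the relative measure of $F(f)$ in $f^{i_n}(B(x,r_n))$ also tends to $0$, using invariance of $F(f)$ under $f$. Pass to a limit of the image sets $f^{i_n}(B(x,r_n))$, which contain disks of definite radius centred at points converging to some $y\in\overline{P_2}$. This gives a disk $B(y,\rho)$ in which the Fatou set has zero measure, so it is contained in $J(f)$. That is impossible, since the Fatou set is open and dense near every Julia point (for instance $U$ accumulates on every point of $J(f)=\partial U$'s closure structure, as $J(f)\ne\widehat{\mathbb C}$). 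Hence $\operatorname{mes}(J_2\cup J_3\cup J_{41})=0$.

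The main obstacle is the area distortion for degree-bounded, non-univalent maps. We would handle it as in \cite[Lemma 6]{YZ}. For a proper map of degree at most $D$ between disks with a definite modulus collar, the pullback of a set of relative area $\varepsilon$ in the image has relative area at most $\omega_D(\varepsilon)\to0$. This follows from the Blaschke normalization and the uniform bounds $r<|F_n|<R$ on $\widetilde\gamma_n$ established in the claim above. A secondary technical point is ensuring the landing pieces stay compactly inside $P_0$ despite boundaries pinched at preimages of $0$; this is handled by Lemma~\ref{lem:parabolic_escape}-type re-indexing.
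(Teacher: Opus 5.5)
Your argument works in outline, but it runs in the opposite direction from the paper's. The paper takes no limit. For each $x\in J_2\cup J_3\cup J_{41}$ it fixes a Fatou disk $B(\eta,r)\Subset\Delta_2\setminus J(f)$ and pulls it back by $f^{i_n}$ into $\Delta_{i_n+2}(x)$. It then uses the shape bound of Lemma~\ref{lem:haissinsky}, the Schwarz lemma and Koebe to show that this pullback fills a definite proportion of $\Delta_{i_n+2}(x)$, so no point of the set is a density point of $J(f)$. You instead push a density point forward to a fixed scale and produce a disk inside $J(f)$. Both routes rest on the same input: bounded degree, landing in a compact part of a fixed piece, and the re-indexing of Lemma~\ref{lem:parabolic_escape} near preimages of $0$. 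Your version works with round disks centred at $x$, so it never has to compare puzzle pieces with balls. In exchange it needs the circles of \cite[Lemma 4]{YZ}, a derivative bound, and a limiting argument. The paper's version needs only the bounded shape of $\Delta_{i_n+2}(x)$ about $\eta_n$.

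One step needs correcting: the estimate you name as the main obstacle points the wrong way. To pass from ``$F(f)$ has small relative area in $B(x,r_n)$'' to ``$F(f)$ has small relative area in $f^{i_n}(B(x,r_n))$'', you need small sets to have small images. A pullback bound of the form ``sets of relative area $\varepsilon$ in the image have preimages of relative area at most $\omega_D(\varepsilon)$'' does not imply this. The bound you actually need is elementary:

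Since $B(x,2r_n)\subset P_{i_n}(x)$, the set $\alpha_n^{-1}(B(x,r_n))$ lies in a fixed disk $\{|w|\le s\}$ with $s<1$. On that disk, Schwarz--Pick gives $|F_n'|\le(1-s^2)^{-1}$ and $|F_n|\le s$. Koebe gives $|\alpha_n'|\asymp|\alpha_n'(0)|$ and a uniform upper bound for $|\beta_n'|$. Also $r_n\le\frac12\operatorname{dist}(x,\partial P_{i_n}(x))\le\frac12|\alpha_n'(0)|$. Hence $|(f^{i_n})'|\lesssim 1/r_n$ on $B(x,r_n)$, so $\operatorname{mes}(f^{i_n}(A))\lesssim\operatorname{mes}(A)/r_n^2$ for every $A\subset B(x,r_n)$.

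Combine this with three facts:
\begin{itemize}
\item[(i)] backward invariance gives $F(f)\cap f^{i_n}(B(x,r_n))\subset f^{i_n}(F(f)\cap B(x,r_n))$;
\item[(ii)] $f^{i_n}(B(x,r_n))\supset\beta_n(\{|z|<r\})$, because $F_n(0)=0$ and $|F_n|>r$ on $\widetilde\gamma_n$;
\item[(iii)] the images lie in $\beta_n(\{|z|\le R\})$, a compact subset of the target piece (not of $\overline{P_2}$, as you wrote, but this is harmless).
\end{itemize}
Together these give exactly what your limit argument uses. The final contradiction is then simply that $J(f)=\partial U$ has empty interior.
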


\begin{proof}
By the Lebesgue density theorem (see \cite[Section~1.7]{EG}), it is sufficient to show that no point in $J_2\cup J_3\cup J_{41}$ is a Lebesgue density point of $J(f)$. 

Let $x\in J_2\cup J_3\cup J_{41}$. By Lemma~\ref{bounded degree}, there exist a fixed puzzle piece $P_0$, a sequence $i_n\to\infty$, and a constant $D>0$ such that $f^{i_n}(x)\in P_0$ and $\deg(f^{i_n}:P_{i_n}(x)\to P_0)\leq D$.
After passing to a subsequence, we may assume that $f^{i_n}(x)\to x_0\in J(f)$. Let $E_{x_0}$ denote the Julia component of $f$ containing $x_0$.

Assume first that $E_p$ is non-singleton. If
$E_{x_0}\cap\bigcup_{i\geq0}f^{-i}(0)=\emptyset$, choose $k_0\geq1$ such that
$P_{k_0}(x_0)\Subset P_0(x_0)$. If
$E_{x_0}\cap\bigcup_{i\geq0}f^{-i}(0)\neq\emptyset$ but
$x_0\notin\bigcup_{i\geq0}f^{-i}(0)$, then $E_{x_0}$ is non-singleton, and we may choose $k_0\geq1$ such that $X_{k_0}(x_0)\Subset X_0(x_0)$.
Suppose that $x_0\in f^{-j}(0)$ for some $j\geq0$. Replacing $i_n$ by
$i_n+j$ and passing to a subsequence, we may assume that
$f^{i_n}(x)\to0$ and $f^{i_n}(x)\in X_{0,0}$, where $X_{0,0}$ is a fixed
depth-$0$ ray-cut puzzle piece with $0\in\partial X_{0,0}$. By
Lemma~\ref{lem:parabolic_escape}, there exist integers $k_n\geq i_n$ and a fixed
ray-cut puzzle piece $X_2\Subset X_{0,0}$ such that $f^{k_n}(x)\in X_2$ and
$\deg(f^{k_n}:X_{k_n}(x)\to X_{0,0})$ is uniformly bounded.

\vspace{0.2em}
Now assume that $E_p=\{0\}$. If
$x_0\notin\bigcup_{i\geq0}f^{-i}(0)$, choose $k_0\geq1$ such that
$P_{k_0}(x_0)\Subset P_0(x_0)$.
Otherwise, suppose that $x_0\in f^{-j}(0)$ for some $j\geq0$. Replacing $i_n$
by $i_n+j$ and passing to a subsequence, we may assume that
$f^{i_n}(x)\to0$ and $f^{i_n}(x)\in P_{0,0}$, where $P_{0,0}$ is a fixed
depth-$0$ puzzle piece with $0\in\partial P_{0,0}$. Let
$\{P_{n,0}\}_{n\geq0}$ be the puzzle pieces with $0\in\partial P_{n,0}$.
Repeating the proof of Lemma~\ref{lem:parabolic_escape}, with $X_{n,0}$
replaced by $P_{n,0}$, we obtain integers $k_n\geq i_n$ and a fixed puzzle
piece $P_2\Subset P_{0,0}$ such that $f^{k_n}(x)\in P_2$ and
$\deg(f^{k_n}:P_{k_n}(x)\to P_{0,0})$ is uniformly bounded.

\vspace{0.2em}
Relabeling the resulting sequence as $\{i_n\}$, in every case we obtain a fixed puzzle piece $\Delta_0$ and a puzzle piece $\Delta_2\Subset\Delta_0$ such that $f^{i_n}(x)\in\Delta_2$ and $\deg(f^{i_n}:\Delta_{i_n}(x)\to\Delta_0)$ is uniformly bounded. 

Take a disk $B(\eta,r)\Subset\Delta_2\setminus J(f)$ and choose $\eta_n\in\Delta_{i_n+2}(x)$ such that $f^{i_n}(\eta_n)=\eta$. Let $\mathcal D_n$ be the component of $f^{-i_n}(B(\eta,r))$ containing $\eta_n$. Then 
\[ \frac{\operatorname{mes}(\Delta_{i_n+2}(x)\cap J(f))}{\operatorname{mes}(\Delta_{i_n+2}(x))} \leq 1- \frac{\operatorname{mes}(\mathcal D_n)}{\operatorname{mes}(\Delta_{i_n+2}(x))}. \]

\begin{claim}
There exists a constant $C>0$, independent of $n$, such that
	\[ \frac{\operatorname{mes}(\mathcal D_n)}{\operatorname{mes}(\Delta_{i_n+2}(x))} \geq C. \]
\end{claim}

\begin{proof}
Set $r_n=\operatorname{dist}(\eta_n,\partial \mathcal D_n)$, $s_n=\operatorname{dist}(\eta_n,\partial\Delta_{i_n+2}(x))$, and $R_n=\max_{\zeta\in\partial\Delta_{i_n+2}(x)} |\zeta-\eta_n|$. Then $\operatorname{mes}(\mathcal D_n)\geq\pi r_n^2$ and $\operatorname{mes}(\Delta_{i_n+2}(x))\leq\pi R_n^2$. 
By Lemma~\ref{lem:haissinsky}, there exists a constant $M\geq1$ such that $\operatorname{Shape}(\Delta_{i_n+2}(x),\eta_n)\leq M$ for all $n$.
Then $R_n\leq Ms_n$ and hence
\[ \frac{\operatorname{mes}(\mathcal D_n)}{\operatorname{mes}(\Delta_{i_n+2}(x))} \geq \frac{1}{M^2}\left(\frac{r_n}{s_n}\right)^2. \]
It remains to prove that $r_n/s_n$ has a uniform positive lower bound.

Let $\rho_n:\mathbb{D}\to\Delta_{i_n+2}(x)$ be a conformal map satisfying $\rho_n(0)=\eta_n$, and let $\rho_0:\mathbb{D}\to\Delta_2$ be a conformal map satisfying $\rho_0(0)=\eta$. Set $G_n:=\rho_0^{-1}\circ f^{i_n}\circ\rho_n$. Then $G_n:\mathbb{D}\to\mathbb{D}$ is a proper holomorphic map satisfying $G_n(0)=0$.

\vspace{0.2em}
Set $a=\operatorname{dist}\bigl(0,\partial\rho_0^{-1}(B(\eta,r))\bigr)>0$. Choose $\xi_n\in\partial\rho_n^{-1}(\mathcal D_n)$ such that $r_n=|\rho_n(\xi_n)-\rho_n(0)|$. Since $G_n(\xi_n)\in\partial\rho_0^{-1}(B(\eta,r))$, we have $|G_n(\xi_n)|\geq a$. On the other hand, the Schwarz lemma implies $|G_n(\xi_n)|\leq|\xi_n|$. Therefore, $|\xi_n|\geq a$.

\vspace{0.2em}		
By the Koebe distortion theorem,
\[ r_n =|\rho_n(\xi_n)-\rho_n(0)| \geq |\rho_n'(0)|\frac{|\xi_n|}{(1+|\xi_n|)^2} \geq |\rho_n'(0)|\frac{a}{(1+a)^2}. \]
The Koebe $1/4$-theorem gives $s_n\leq|\rho_n'(0)|$. This proves the claim.
\end{proof}
	
It follows that
\[ \frac{\operatorname{mes}(\Delta_{i_n+2}(x)\cap J(f))}{\operatorname{mes}(\Delta_{i_n+2}(x))} \leq1-C. \]
Thus $x$ is not a Lebesgue density point of $J(f)$.
\end{proof}
	
\begin{proposition}\label{dilatation}
For each $x\in J_1\cup J_2\cup J_3\cup J_{41}$, we have $\underline{\mathcal{H}}(H, x) < \infty$.
\end{proposition}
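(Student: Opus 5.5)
The plan is to run the argument of Lemma~\ref{lem:finite_lower_dilatation_Omega2} pointwise for the global conjugacy $H$, separating $x\in J_1$ from $x\in J_2\cup J_3\cup J_{41}$. The key input is a bounded degree statement along nested pieces around $x$, together with the fact that $H$ maps these pieces to the corresponding pieces of $g$.

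\textbf{The case $x\in J_1$.} Here $x$ lies on a non-singleton Julia component $E_x$. If $x\notin\bigcup_{i\ge0}f^{-i}(0)$ and $E_x$ is periodic, then $H=\phi_{E_x}$ on a neighborhood of $x$. This neighborhood is $\widehat P_0(E_x)$ when $E_p$ is non-singleton, and $P_0(E_x)$ otherwise. Since $\phi_{E_x}$ is quasiconformal by Propositions~\ref{prop:local_conjugacy} and~\ref{prop:local_conjugacy_singleton}, $\underline{\mathcal H}(H,x)$ is bounded by a constant depending only on its dilatation. Note that $H=\phi_{E}$ on these neighborhoods because the construction of $H$ restricts to $\phi_E$ there, and on the pieces in $\mathcal B_n(E)$ the two maps agree by the lifting definitions.

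If $E_x$ is strictly preperiodic with $f^\ell(E_x)=E$ periodic, I will use that $H=(g^\ell)^{-1}\circ H\circ f^\ell$ locally. Near $x$, $f^\ell$ is a branched covering of bounded degree, so quasiconformality, or finiteness of $\underline{\mathcal H}$, pulls back. For noncritical $x$ this is immediate. If $x$ is critical, I will compare via local coordinates $z\mapsto z^d$: a holomorphic branched cover of local degree $d$ distorts shapes of round disks by a bounded factor. Points $x\in\bigcup_i f^{-i}(0)\cap J_1$ are handled the same way, since $\phi_E$ is quasiconformal at $0$ as well.

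\textbf{The case $x\in J_2\cup J_3\cup J_{41}$.} Here $x$ is a singleton component, and I will copy the proof of Lemma~\ref{lem:finite_lower_dilatation_Omega2} with ordinary puzzle pieces $P_n(x)$ in place of ray-cut pieces. The steps are as follows.

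\begin{enumerate}
\item Lemma~\ref{bounded degree} gives $i_n$, a fixed $P_0$, and $\deg(f^{i_n}\colon P_{i_n}(x)\to P_0)\le D$.
\item As in Proposition~\ref{mes=0}, pass to a subsequence and use Lemma~\ref{lem:parabolic_escape}, or its $P_{n,0}$-version, when the limit $x_0$ is an iterated preimage of $0$. This produces a fixed $\Delta_2\Subset\Delta_0$ with $f^{i_n}(x)\in\Delta_2$ and bounded degree on $\Delta_{i_n}(x)$.
\item Choose circles $\gamma_n=\partial B(x,r_n)$ via \cite[Lemma 4]{YZ}. The Blaschke product claim then shows that $f^{i_n}(\gamma_n)$ stays in a compact subset of $\Delta_0$ with bounded shape about $f^{i_n}(x)$.
\item Apply the homeomorphism $H$, which is continuous by Propositions~\ref{top conj non-singleton} and~\ref{top conj singleton}. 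Since $f^{i_n}(x)$ ranges over the compact set $\overline{\Delta_2}$ and the curves range over a compact family, the images $H(f^{i_n}(\gamma_n))$ bound domains $\mathcal D_n$ with $\operatorname{mod}(H(\Delta_0)\setminus\overline{\mathcal D_n})\ge m$ and $\operatorname{Shape}\le M$ uniformly.
\item $H(\Delta_{i_n}(x))$ is the component of $g^{-i_n}(H(\Delta_0))$ containing $H(x)$, of the same degree, by the boundary correspondence of Claim~\ref{claim: boundary correspondence} and conjugacy. Lemma~\ref{lem:haissinsky} then gives $\operatorname{Shape}(H(B(x,r_n)),H(x))\le CM$, and $r_n\to0$ because $\operatorname{diam}\Delta_{i_n}(x)\to0$.
\end{enumerate}

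The main obstacle is step~4. The bounds on modulus and shape must be uniform, yet $H$ is only known to be a homeomorphism on $\Delta_0$, and $\Delta_0$ may touch $0$ or the Julia components. The plan is to choose $\Delta_2\Subset\Delta_0$ first and use only the compactness of the family of curves in $\overline{\Delta_2}'$, for some slightly larger compact set, exactly as in Lemma~\ref{lem:finite_lower_dilatation_Omega2}. A secondary point to check is that $H(\Delta_{i_n}(x))$ really is a pullback puzzle piece for $g$, which again follows from $H|_{\partial P_n}=h|_{\partial P_n}$.
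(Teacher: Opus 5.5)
Your treatment of $x\in J_2\cup J_3\cup J_{41}$ matches the paper, which simply says that the argument of Lemma~\ref{lem:finite_lower_dilatation_Omega2} carries over to $H$. The case $x\in J_1$, however, has a genuine gap.

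\textbf{The gap.} You assert that $H=\phi_{E_x}$ on a neighborhood of $x$, and that on the pieces of $\mathcal B_n(E_x)$ the two maps agree ``by the lifting definitions''. This is false. By $(\star)$, they agree only on $\mathcal U_{E_x}\setminus\bigcup_{n\ge1}\bigcup_{P_n\in\mathcal B_n(E_x)}P_n$.
\begin{itemize}
\item On a piece $P_1\in\mathcal B_1(E_x)$, the map $\phi_{E_x}$ is an Ahlfors--Beurling extension of boundary values, glued with $\phi$ near the points $y_j,z_k$. On deeper pieces it is a lift of this extension. It coincides with $h=H$ only on $\partial P_n$.
\item By contrast, $H|_{P_n}$ is the genuine conjugacy: it is $h$ on $P_n\cap U$, and $\xi$ or $\phi_{E'}$ on the Julia components inside $P_n$.
\end{itemize}
These pieces accumulate on $E_x$, so in general no neighborhood of $x$ lies in the region where $H=\phi_{E_x}$. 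If it did, $H$ would already be quasiconformal on all Julia points inside these pieces, which is exactly what the construction does not provide.

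The same error affects your preperiodic case. Near $x\in E_s$ one has $H=(g^\ell)^{-1}\circ H\circ f^\ell$, which is not a lift of $\phi_E$. Moreover, a liminf bound for a mere homeomorphism at $f^\ell(x)$ does not transfer to round disks at $x$ without some limsup (quasisymmetry) control.

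\textbf{The missing idea.} You should compare $H$ with $\phi_{E_x}$ rather than identify them.
\begin{itemize}
\item Use the lifted quasiconformal extension $\phi_{E_x}\colon\mathcal U_{E_x}\to\mathcal V_{\xi(E_x)}$, which also covers strictly preperiodic components. This gives $\overline{\mathcal H}(\phi_{E_x},x)\le M$ and $\phi_{E_x}(x)=H(x)$.
\item For $y\in P_n\in\mathcal B_n(E_x)$, both $H(y)$ and $\phi_{E_x}(y)$ lie in $Q_n=H(P_n)$, because the two maps agree on $\partial P_n$.
\item The key geometric input is $\operatorname{diam}(Q_n)\le C\operatorname{dist}(Q_n,\xi(E_x))$, from \cite[Lemma 4.9]{PT}.
\item Since $H(x)\in\xi(E_x)$, this yields $|\phi_{E_x}(y)-H(y)|\le C\min\{|\phi_{E_x}(y)-H(x)|,\,|H(y)-H(x)|\}$.
\item The triangle inequality then gives $(C+1)^{-1}|\phi_{E_x}(y)-H(x)|\le|H(y)-H(x)|\le(C+1)|\phi_{E_x}(y)-H(x)|$ for all $y$ near $x$. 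This is trivial where $H(y)=\phi_{E_x}(y)$.
\end{itemize}
Hence $\underline{\mathcal H}(H,x)\le(C+1)^2\,\overline{\mathcal H}(\phi_{E_x},x)\le(C+1)^2M$.
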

Before proving this proposition, we extend every $\phi_{E_s}$ from the strictly preperiodic Julia component $E_s$ to a puzzle neighborhood. In Subsection 3.1.1, we have thickened puzzle pieces of depth 0 containing a non-singleton periodic Julia component. Here we define thickened puzzle pieces at all positive depths.

For a non-singleton periodic Julia component $E$, set
\[
\mathcal U_E=
\begin{cases}
	\widehat{P}_0(E), & \text{if }E_p\text{ is non-singleton},\\
	P_0(E), & \text{if }E_p=\{0\},
\end{cases}
\qquad
\mathcal V_{\xi(E)}=
\begin{cases}
	\widehat{Q}_0(\xi(E)), & \text{if }E_p\text{ is non-singleton},\\
	Q_0(\xi(E)), & \text{if }E_p=\{0\}.
\end{cases}
\]
The map $\phi_E:\mathcal U_E\to\mathcal V_{\xi(E)}$ is given by Proposition~\ref{prop:local_conjugacy} or Proposition~\ref{prop:local_conjugacy_singleton}, respectively.

We first define the thickened puzzle pieces when $E_p$ is non-singleton. Take a Jordan domain $\mathcal{D}_f^1\Subset W_0\cap f(W_0)$ containing $0$ such that \[\mathcal{D}_f^1\cap\bigcup_{i= 0}^{N+1} f^{i}(C(f))=\emptyset \quad \text{and} \quad \partial \mathcal{D}_f^1\cap P(f)=\emptyset.\]
For $n\geq2$, inductively take a Jordan domain $\mathcal{D}_f^n\Subset \mathcal{D}_f^{n-1}\cap f(\mathcal{D}_f^{n-1})$ containing $0$ with
\[\mathcal{D}_f^n\cap\bigcup_{i= 0}^{N+n} f^{i}(C(f))=\emptyset \quad \text{and} \quad  \partial \mathcal{D}_f^n\cap P(f)=\emptyset.\]
For any $P_n\in\mathcal P_n$, where $n\geq1$, recall that $\Lambda(P_n)=\partial P_n\cap f^{-(N+n)}(0)$ and define
\[
\widehat{P}_n
=
P_n\cup
\bigcup_{x\in\Lambda(P_n)}
\operatorname{Comp}_x f^{-(N+n)}(\mathcal{D}_f^n).
\]
The thickened pieces $\widehat{Q}_n$ for $g$ are defined in the same way. By choosing $\mathcal{D}_f^n$ sufficiently small, the depth-$n$ thickened puzzle pieces are pairwise disjoint and each contains at most one critical point.

Now fix a non-singleton strictly preperiodic Julia component $E_s$, and let $\ell\geq1$ be the smallest integer such that $f^{\ell}(E_s)=E$, where $E$ is periodic.
When $E_p$ is non-singleton, choose $n_{E_s}>\ell$ sufficiently large such that $f^{\ell}\big(\widehat{P}_{n_{E_s}}(E_s)\big)\subset\widehat{P}_0(E)$ and all critical points of $f^{\ell}$ contained in $\widehat{P}_{n_{E_s}}(E_s)$ lie in $\widehat{E_s}$. Set
\[
\mathcal U_{E_s}:=\widehat{P}_{n_{E_s}}(E_s),
\qquad
\mathcal V_{\xi(E_s)}:=\widehat Q_{n_{E_s}}(\xi(E_s)).
\]
When $E_p=\{0\}$, choose $n_{E_s}>\ell$ large enough such that $f^{\ell}\big(P_{n_{E_s}}(E_s)\big)\subset P_0(E)$ and all critical points of $f^{\ell}$ contained in $P_{n_{E_s}}(E_s)$ lie in $\widehat{E_s}$. Set
\[
\mathcal U_{E_s}:=P_{n_{E_s}}(E_s),
\qquad
\mathcal V_{\xi(E_s)}:=Q_{n_{E_s}}(\xi(E_s)).
\]

In either case,
\[
f^{\ell}:\mathcal U_{E_s}\setminus\widehat{E_s}
\longrightarrow
f^{\ell}\big(\mathcal U_{E_s}\setminus\widehat{E_s}\big) \quad \text{and} \quad g^{\ell}:\mathcal V_{\xi(E_s)}\setminus\widehat{\xi(E_s)}
\longrightarrow
g^{\ell}\big(\mathcal V_{\xi(E_s)}\setminus\widehat{\xi(E_s)}\big) 
\]
are holomorphic coverings. We lift $\phi_E$ through these covering maps, choosing the unique lift whose continuous extension to $E_s$ agrees with the previously constructed $\phi_{E_s}$. Gluing this lift with $\phi_{E_s}$ on $\widehat{E_s}$, we obtain a quasiconformal map
$\phi_{E_s}:\mathcal U_{E_s}\to\mathcal V_{\xi(E_s)}$.

\vspace{0.2em}		
Combining Propositions~\ref{prop:local_conjugacy} and~\ref{prop:local_conjugacy_singleton} with the lifting construction above, we obtain the following common property: for every non-singleton Julia component $E_f$, we have
\[
\tag{$\star$}\label{star:agreement}
	\phi_{E_f}=H
	\quad\text{on}\quad
	\mathcal U_{E_f}\setminus
	\bigcup_{n\geq1}\bigcup_{P_n\in\mathcal B_n(E_f)}P_n.
\]

\begin{proof}[Proof of Proposition~\ref{dilatation}]
For $x\in J_2\cup J_3\cup J_{41}$, the argument in Lemma~\ref{lem:finite_lower_dilatation_Omega2} carries over to $H$ and yields $\underline{\mathcal{H}}(H,x)<\infty$.

For $x\in J_1$, we follow the strategy from \cite[Proposition 4.5]{CP}. Let $E_x$ be the non-singleton Julia component containing $x$. By \eqref{star:agreement}, $\phi_{E_x}=H$ on $E_x$. Hence there exists a constant $M\ge 1$ such that
\[
\overline{\mathcal{H}}(\phi_{E_x},x)
:= \limsup_{r\to 0}
\frac{\sup_{|y-x|=r}|\phi_{E_x}(y)-H(x)|}
{\inf_{|y-x|=r}|\phi_{E_x}(y)-H(x)|}
\le M.
\]

We use the distance $|\phi_{E_x}(y) - H(x)|$ to control $|H(y) - H(x)|$. By \eqref{star:agreement}, it suffices to bound $|H(y)-H(x)|$ for $y\in P_n$ with $P_n\in\mathcal B_n(E_x)$.
For such $y$, both $\phi_{E_x}(y)$ and $H(y)$ lie in $Q_n = H(P_n)$. By \cite[Lemma 4.9]{PT}, for some constant $C>0$ depending on $E_x$,
\[
\frac{\operatorname{diam}(Q_n)}{\operatorname{dist}(Q_n,\xi(E_x))}\le C.
\]
This yields
\[
\frac{|\phi_{E_x}(y) - H(y)|}{|\phi_{E_x}(y) - H(x)|} \le C \quad \text{and} \quad \frac{|\phi_{E_x}(y) - H(y)|}{|H(y) - H(x)|} \le C.
\]	
From the triangle inequality, we have
\[
\frac{1}{C+1}|\phi_{E_x}(y) - H(x)| \le |H(y) - H(x)| \le (C+1)|\phi_{E_x}(y) - H(x)|.
\]
By the definition of $\underline{\mathcal{H}}(H, x)$, we conclude that 
\[\underline{\mathcal{H}}(H, x) \le (C+1)^2 \overline{\mathcal{H}}(\phi_{E_x}, x) \le (C+1)^2 M < \infty. \]
\end{proof}

For any $c_0\in\operatorname{Crit_{p2}}$, we can construct the KSS nest $c_0\in\widetilde K_n\subset K_n\subset K_n'$ proposed in \cite{KSS}. The key estimates on the moduli of the annuli $K_n'\setminus \overline{K_n}$ and $K_n\setminus \overline{\widetilde K_n}$, together with the bounded shape of $K_n$, follow from arguments in \cite[Lemma 6 and Proposition 1]{Zhai}. Since these estimates hold in our setting without requiring new techniques, we omit the detailed proofs and state the results as follows.

\begin{proposition}\label{KSS nest}
There exist an integer $n_0$ and positive constants $m$ and $M$ such that
for every $c_0\in\operatorname{Crit_{p2}}$, the associated KSS nest $c_0\in\widetilde K_n\subset K_n\subset K_n'$ satisfies $\operatorname{mod}(K_n'\setminus\overline{K_n})\ge m$, $\operatorname{mod}(K_n\setminus\overline{\widetilde K_n})\ge m$, and $\operatorname{Shape}(K_n,c_0)\le M$ for all $n\ge n_0$.
\end{proposition}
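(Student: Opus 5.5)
The plan is to follow the Kozlovski--Shen--van Strien construction as adapted in \cite[Lemma 6 and Proposition 1]{Zhai}, working in the puzzle $\{\mathcal P_n\}$ of Section 2. For $c_0\in\operatorname{Crit_{p2}}$, the first task is to set up the enhanced nest. The key point is that $c_0$ avoids $\bigcup_{i\ge0}f^{-i}(0)$ and that the closure of its orbit stays away from the parabolic point in the puzzle sense. The latter holds because $c_0$ is non-periodic persistently recurrent and $\operatorname{Crit}(c_0)=[c_0]$ consists of singleton-type points, so $f^j(c_0)$ never enters $P_n(E)$ for a periodic non-singleton $E$ beyond a bounded depth. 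The puzzle pieces $P_n(c')$ for $c'\in[c_0]$ are then genuine Jordan domains with pairwise disjoint closures at large depth (by (P6)/(P7) away from the pinching points). Consequently the first-return and first-landing maps to a puzzle piece around $c_0$ are proper maps of bounded degree, which is exactly the combinatorial input needed by \cite{KSS}. I would define $K_n$ via the KSS ``$\mathcal A$/$\mathcal B$'' operations (smallest successor constructions) starting from a critical piece $I_0=P_{n_0}(c_0)$. I would then set $K_n'$ as the appropriate pullback with a collar, and $\widetilde K_n$ as the component of the first return domain containing $c_0$.

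The second step establishes a priori bounds. The moduli bounds $\operatorname{mod}(K_n'\setminus\overline{K_n})\ge m$ and $\operatorname{mod}(K_n\setminus\overline{\widetilde K_n})\ge m$ come from the KSS ``small-to-big'' argument, i.e.\ the covering lemma of Kahn--Lyubich combined with the real-bound-free complex bounds in \cite{KSS}. The only ingredients are as follows. First, an initial annulus of definite modulus around the top-level piece: take $P_{n_0}(c_0)\Subset P_{n_0-k}(c_0)$, which holds because $c_0$ is not on the boundary of any piece and the pieces nest compactly after finitely many steps. Second, the degrees of all first-return maps are bounded by $\delta^{b}$. Since $\operatorname{Crit}$ is finite, I can take the minimum of these constants over all classes, which gives a uniform $m$ and $n_0$.

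The third step is the bounded shape $\operatorname{Shape}(K_n,c_0)\le M$. I would derive it from the modulus bounds via Lemma~\ref{lem:haissinsky}. Here $K_n$ is a pullback of bounded degree of a piece whose shape about the image of $c_0$ is controlled, because it is surrounded by a definite annulus. Together with Koebe distortion, this gives a uniform shape bound, as in \cite[Proposition 1]{Zhai}.

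The main obstacle I expect is the interaction with the parabolic point. Puzzle boundaries of different depths meet at $f^{-i}(0)$, so the KSS annuli could degenerate if the orbit of $c_0$ accumulates on $0$ or on its preimages. I would handle this first by the argument of Lemma~\ref{lem:parabolic_escape}. That is, if returns approach $\bigcup f^{-i}(0)$, push forward until the orbit lands in a fixed piece compactly contained in a depth-$0$ piece (using ray-cut pieces or thickened pieces $\widehat P_n$ when $E_p$ is non-singleton). This recovers a definite initial annulus. Everything afterward proceeds exactly as in \cite{Zhai}.
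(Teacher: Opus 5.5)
Your outline is essentially the paper's own treatment: the paper invokes the KSS nest of \cite{KSS} and states that both modulus bounds and the shape bound follow from the arguments of \cite[Lemma 6 and Proposition 1]{Zhai} with no new techniques, omitting the details, which is the same reduction you make. Two cautions on your added heuristics: a definite collar does not by itself control shape (Lemma~\ref{lem:haissinsky} needs a bounded-shape input, and ``surrounded by a definite annulus'' does not supply one), so $\operatorname{Shape}(K_n,c_0)\le M$ must genuinely come from Zhai's argument rather than the reasoning you sketch; and the detour through Lemma~\ref{lem:parabolic_escape} appears unnecessary and plays no role in the paper's account, since the nest is built from pullbacks along the orbit of $c_0$ of critical pieces around points of $[c_0]$, whose degrees and collars do not depend on how long the orbit lingers near $0$.
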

\begin{proof}[Proof of Proposition~\ref{qc conj}]
	Let $H$ be the topological conjugacy fixed above. Choose a closed Jordan disk $\mathcal{D}_{\infty}\Subset U$ containing $\infty$, and set $\Omega=\widehat{\mathbb{C}}\setminus \mathcal{D}_{\infty}$. Then $\Omega$ is a Jordan domain in $\mathbb C$ containing $J(f)$. We decompose 
	\[\Omega_1=\Omega\cap F(f),\qquad \Omega_2=J_1\cup J_2\cup J_3\cup J_{41},\qquad \Omega_3=J_{42},\qquad \Omega_4=J_0.\] 
	Since $H$ is conformal on $F(f)$, we have $\underline{\mathcal H}(H,x)=1$ for every $x\in\Omega_1$. For $x\in\Omega_2$, Proposition~\ref{dilatation} gives $\underline{\mathcal H}(H,x)<\infty$. Moreover, since $H(J_i)=\widetilde J_i$ for $i\in\{1,2,3,41\}$, the corresponding versions of Propositions~\ref{mes(J_1)=0} and~\ref{mes=0} for $g$ imply $\operatorname{mes}\bigl(H(\Omega_2)\bigr)=0$. 
	
	For each $x\in\Omega_3$, choose $c_0\in\operatorname{Crit_{p2}}(x)$ and let $c_0\in\widetilde K_n\subset K_n\subset K_n'$ be the corresponding KSS nest from Proposition~\ref{KSS nest}. Let $l_n\geq0$ be the smallest integer such that $f^{l_n}(x)\in\widetilde K_n$, and set $V_n(x)=\operatorname{Comp}_x f^{-l_n}(K_n)$. The domains $V_n(x)$ are strictly nested and shrink to $x$. Set 
	\[\mathcal F=\{V_n(x)\mid x\in\Omega_3,\ n\geq n_0\}.\]
	By the properties of puzzle pieces, we know that $\mathcal F$ is a Markov family. The uniform modulus and shape bounds of the KSS nests, together with Lemma~\ref{lem:haissinsky}, give a constant $M\geq1$ such that \[\operatorname{Shape}(V_n(x),x)\leq M,\qquad \operatorname{Shape}(H(V_n(x)),H(x))\leq M\] for all $x\in\Omega_3$ and all $n\geq n_0$. Finally, $J_0\subset\bigcup_{i\geq0}f^{-i}(0)$ is countable, and so is $H(J_0)=H(\Omega_4)$. Thus, $\operatorname{length}\bigl(H(\Omega_4)\bigr)=0$.

\vspace{0.2em}
	Therefore, all the conditions of the quasiconformal criterion (Proposition~\ref{qc criterion}) are satisfied, implying that $H$ is quasiconformal on $\Omega$. Since $H$ is conformal on $F(f)$, $H$ is quasiconformal on $\widehat{\mathbb C}$. Thus, $f$ and $g$ are quasiconformally conjugate.
\end{proof}

\section{Absence of invariant line fields}
In this section, we prove that a simple parabolic map $f$ carries no invariant line fields on $J(f)$. The proof combines the almost continuity of line fields with the uniform modulus and shape estimates for KSS nests.

\vspace{0.2em}
We say that a measurable Beltrami differential $\mu(x) \frac{d\bar{x}}{dx}$ on $\widehat{\mathbb{C}}$ is a \textbf{line field} if $|\mu| = 1$ on a measurable set $\Lambda$ of positive measure, and $\mu = 0$ elsewhere. Furthermore, we introduce the following terminology:
\begin{itemize}
	\item $\mu$ is \textbf{$f$-invariant} if $f^* \mu = \mu$ almost everywhere on $\widehat{\mathbb{C}}$.
	\item $\mu$ is \textbf{almost continuous} at a point $x$ if
	\[ 
	\forall \varepsilon > 0, \quad \lim_{r \to 0} \frac{ \operatorname{mes}(\{ y \in B(x, r) \mid |\mu(y) - \mu(x)| \ge \varepsilon \})}{ \operatorname{mes}(B(x, r))} = 0.
	\]
\end{itemize}
By \cite[Lemma~7.1]{RYZ23}, every complex measurable function is almost continuous almost everywhere. Applying this result to the Beltrami coefficient $\mu$ of a line field, we see that any line field is almost continuous almost everywhere.

\vspace{0.2em}
We employ the following criterion of Shen \cite{Shen} to rule out invariant line fields.

\begin{lemma}\label{Shen}{\rm(\cite[Proposition 3.2]{Shen})}
Let $f$ be a rational map of degree at least $2$, and let $x\in J(f)$. If there exist a constant $C > 1$, an integer $D \ge 2$, and a sequence $h_n \colon U_n \to V_n$ with the following properties:
\begin{enumerate}
	\item[(1)] $U_n, V_n$ are Jordan domains, $\operatorname{diam}(U_n) \to 0$ and $\operatorname{diam}(V_n) \to 0$
	as $n \to \infty$.
	\item[(2)] $h_n$ is a proper holomorphic map of degree between $2$ and $D$.
	\item[(3)] For some $a \in U_n$ such that $h'_n(a) = 0$ and for $b = h_n(a)$, we have 
	$$\operatorname{Shape}(U_n, a) \le C \quad \text{and} \quad \operatorname{Shape}(V_n, b) \le C.$$
	\item[(4)] $\operatorname{dist}(x,U_n) \le C \cdot \operatorname{diam}(U_n)$ and $\operatorname{dist}(x,V_n) \le C \cdot \operatorname{diam}(V_n)$.
\end{enumerate}
Then for any $f$-invariant line field $\mu$, either $\mu(x) = 0$ or $\mu$ is not almost continuous at $x$.
\end{lemma}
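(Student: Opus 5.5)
The plan is to argue by contradiction via rescaling and a compactness argument, as in McMullen's proof that recurrent critical points with bounded geometry carry no invariant line fields. As in Shen's setting, I will take each $h_n$ to be the restriction of an iterate $f^{k_n}$ to $U_n$; without some such link to $f$ the statement gives no relation between $h_n^*\mu$ and $\mu$. Hence $h_n^*\mu=\mu$ a.e. on $U_n$. Suppose $\mu(x)\neq 0$ and $\mu$ is almost continuous at $x$; then $|\mu(x)|=1$.

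\textbf{Step 1: rescaling.} Let $A_n(z)=(z-a)/\operatorname{diam}(U_n)$ and $B_n(z)=(z-b)/\operatorname{diam}(V_n)$. The normalized domains $A_n(U_n)$ and $B_n(V_n)$ have diameter $1$ and contain round disks of radius $\ge 1/(2C)$ about $0$, by the Shape bound (3). By (4), $x$ lies at distance $\le C\operatorname{diam}(U_n)$ from $U_n$, so $U_n\subset B(x,(C+1)\operatorname{diam}(U_n))$, and similarly for $V_n$. Since $\operatorname{diam}(U_n),\operatorname{diam}(V_n)\to 0$ by (1), almost continuity at $x$ applied to balls of radius comparable to these diameters gives the following: the rescaled line fields $\mu_n=(A_n^{-1})^*\mu$ on $A_n(U_n)$ and $\nu_n=(B_n^{-1})^*\mu$ on $B_n(V_n)$ converge in measure to the constant line field $\mu(x)\,d\bar z/dz$. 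Affine maps do not change the Beltrami coefficient, only its domain.

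\textbf{Step 2: compactness of the rescaled maps.} Set $g_n=B_n\circ h_n\circ A_n^{-1}\colon A_n(U_n)\to B_n(V_n)$. This is proper of degree in $[2,D]$, has a critical point at $0$, and satisfies $g_n(0)=0$. Using the uniform Shape bounds, I would pass to a subsequence along which the domains converge in the Carath\'eodory sense to bounded-shape limits $U_\infty\ni 0$ and $V_\infty\ni 0$. The main obstacle is to show that $g_n$ converges locally uniformly on a fixed disk $B(0,\rho)$ to a \emph{nonconstant} holomorphic map $g$ with $g'(0)=0$.

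The maps $g_n$ are uniformly bounded, so normality is automatic. Nondegeneracy is the delicate point, and I would get it by conjugating to Blaschke products, as in the Claim of Lemma~\ref{lem:finite_lower_dilatation_Omega2}. The conformal maps of $\mathbb D$ onto the rescaled domains form precompact families with nondegenerate limits, thanks to the Shape bounds and Koebe. The Blaschke products have degree between $2$ and $D$ and fix $0$ with a critical point there. Such Blaschke products cannot degenerate to a constant on a definite disk unless all zeros escape to $\partial\mathbb D$. That is impossible here because $0$ is a zero of multiplicity $\ge 2$. Hence the limit $g$ has a critical point of local degree $k\ge 2$ at $0$.

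\textbf{Step 3: passing invariance to the limit.} From $h_n^*\mu=\mu$ we get $g_n^*\nu_n=\mu_n$ a.e. Away from the finitely many critical points, $g_n\to g$ in $C^1$ on compact sets, and pullback by such maps is continuous with respect to convergence in measure. Combining this with Step 1 yields $g^*(\mu(x)\,d\bar z/dz)=\mu(x)\,d\bar z/dz$ a.e. near $0$. But locally $g=\psi(z)^k$ with $\psi$ conformal and $\psi(0)=0$. The pullback of a constant line field under $z\mapsto z^k$ has direction rotating with $\arg z$ by a factor $k-1\ge 1$; composing with the conformal $\psi$, which is nearly linear near $0$, cannot make it constant. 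This is a contradiction, so either $\mu(x)=0$ or $\mu$ is not almost continuous at $x$.
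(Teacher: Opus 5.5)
Your argument is correct and is essentially the standard rescaling-and-compactness proof behind Shen's criterion (the paper gives no proof of its own, only the citation). You are also right that the statement as transcribed needs a hypothesis tying $h_n$ to $f$: otherwise $h_n(z)=x+n(z-x)^2$ on $B(x,1/n)$ would rule out the invariant line fields of flexible Latt\`es maps. The hypothesis to impose is $h_n^*\mu=\mu$ a.e.\ on $U_n$, not $h_n=f^{k_n}|_{U_n}$, because Proposition~\ref{J_{42}} applies the lemma to $h_n=(f^{v_n}|_{V_n(x)})^{-1}\circ f^{u_n+t_n}$. That map preserves $\mu$ since the inverse branch is univalent, and your proof uses nothing beyond this invariance.
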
 

Let $\operatorname{Crit}_{w}(f)$ be the set of wandering critical points in $\operatorname{Crit}$, and set $J_w:=\bigcup_{i\geq0}f^{-i}(\operatorname{Crit}_w(f))$.

\begin{proposition}\label{J_{42}}
Suppose that $\mu$ is an $f$-invariant line field on the Julia set. If $x \in J_{42}\setminus J_w$, then $\mu(x) = 0$ or $\mu$ is not almost continuous at $x$.
\end{proposition}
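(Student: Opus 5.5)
We apply Shen's criterion (Lemma~\ref{Shen}) at $x$, building the maps $h_n$ from the KSS nest of Proposition~\ref{KSS nest}. Fix $x\in J_{42}\setminus J_w$. By definition of $J_{42}$ we have $\operatorname{Crit}(x)=\operatorname{Crit_p}(x)$, and some $c_0\in\operatorname{Crit}(x)$ lies in $\operatorname{Crit_{p2}}$. Take the KSS nest $c_0\in\widetilde K_n\subset K_n\subset K_n'$ for $n\ge n_0$. Let $l_n\ge 0$ be the first time with $f^{l_n}(x)\in\widetilde K_n$. Since $x\to c_0$, such $l_n$ exists. Since $x\notin J_w$, the orbit of $x$ never hits $c_0$ exactly, so $l_n\to\infty$.

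\textbf{Step 1: shape control at $x$.} Set $V_n(x)=\operatorname{Comp}_x f^{-l_n}(K_n)$ and $V_n'(x)=\operatorname{Comp}_x f^{-l_n}(K_n')$. By first-entry arguments, as in \cite{KSS,Zhai}, the degree of $f^{l_n}:V_n'(x)\to K_n'$ is bounded by a constant $D_0$ depending only on $\#\operatorname{Crit}$ and the local degrees. Combining $\operatorname{mod}(K_n'\setminus\overline{K_n})\ge m$ and $\operatorname{Shape}(K_n,c_0)\le M$ with Lemma~\ref{lem:haissinsky} gives $\operatorname{Shape}(V_n(x),x)\le C_1$. We also need $\operatorname{Shape}(K_n,f^{l_n}(x))$ bounded. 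This follows because $f^{l_n}(x)\in\widetilde K_n$ and $\operatorname{mod}(K_n\setminus\overline{\widetilde K_n})\ge m$, by Koebe distortion. Finally, $\operatorname{diam}V_n(x)\to0$, since puzzle pieces shrink to points off $J_0\cup J_1$.

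\textbf{Step 2: build $h_n$ from the critical return.} Define $U_n:=V_n(x)$ and $h_n:=f^{l_n}:U_n\to K_n$ whenever $U_n$ contains a critical point of $f^{l_n}$. Otherwise $f^{l_n}$ is univalent on $U_n$, and we transport by it instead: let $U_n$ be $\operatorname{Comp}_{c_0}f^{-k_n}$ of a suitable KSS piece around $f^{k_n}(c_0)$, where $k_n$ is the first return of $c_0$ to that piece. Then pull $U_n$ and its image back univalently by the branch of $f^{-l_n}$ through $x$. This gives $h_n$ between two domains near $x$, of degree between $2$ and $D$ ($\deg_{c_0}f\ge2$), with a critical point $a_n$ whose critical value is $b_n$. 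The bounded shape of $U_n$ about $a_n$ and of $V_n$ about $b_n$ comes from the KSS bounds and Lemma~\ref{lem:haissinsky}. Koebe distortion on the univalent pullback, which extends to the thick annulus $V_n'(x)\setminus\overline{V_n(x)}$, keeps the shapes bounded. Condition (4) holds because both domains sit inside $V_n(x)$, which contains $x$ and has bounded shape about it. Since $l_n\to\infty$ and the pieces shrink, their diameters tend to $0$ and are comparable to $\operatorname{diam}V_n(x)$.

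\textbf{Step 3: conclude.} Lemma~\ref{Shen} then yields $\mu(x)=0$ or that $\mu$ is not almost continuous at $x$.

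\textbf{Main obstacle.} The main obstacle is Step 2: producing a genuinely critical map $h_n$ with uniformly bounded degree and both domain and image of bounded shape comparable to $\operatorname{diam}V_n(x)$. Exploiting the return map $K_n\to K_{n-1}$ or $\widetilde K_n\to K_n$ of the KSS construction should do it, since that map is critical at $c_0$ with bounded degree. For this we must check that the univalent pullback along the orbit of $x$ does not degenerate. That is where persistent recurrence and $x\notin J_w$ are used.
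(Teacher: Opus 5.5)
\textbf{There is a genuine gap in Step~2, which is the heart of the argument.} Your overall strategy is the right one: Shen's criterion, fed by the KSS nest at some $c_0\in\operatorname{Crit_{p2}}(x)$.

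\emph{Your first case fails condition~(4).} There you take $h_n=f^{l_n}\colon V_n(x)\to K_n$. The range $K_n$ is a puzzle neighbourhood of $c_0$ with $\operatorname{diam}K_n\to0$. Also $c_0\neq x$, since $c_0$ has infinite orbit, so $c_0\in J_w$, while $x\notin J_w$. Hence $\operatorname{dist}(x,K_n)\to|x-c_0|>0$, and condition~(4) of Lemma~\ref{Shen} fails for the target domain. Your justification of (4) (``both domains sit inside $V_n(x)$'') only covers your second case.

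\emph{Your second case is not resolved.} The domain is an unspecified return piece of $c_0$ inside $K_n$, carried back by the univalent branch of $f^{-l_n}$ at $x$. To get (4) after this transport, that piece must have diameter comparable to $\operatorname{diam}K_n$. It must also lie within a bounded multiple of its own diameter from $f^{l_n}(x)$. The KSS estimates are only lower bounds on moduli, so nothing in them prevents a return domain of $c_0$ from being arbitrarily small relative to $K_n$ and relatively far from $f^{l_n}(x)$. You flag this as the main obstacle, and it remains open.

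\textbf{The missing idea is to build both the domain and the range of $h_n$ as pullbacks containing $x$ itself.} This removes the case split and makes (4) trivial. The paper proceeds as follows.
\begin{itemize}
\item Let $v_n\le l_n$ be the first time $f^{v_n}(\widetilde V_n(x))$ contains a critical point $c$. Then $f^{v_n}\colon V_n'(x)\to\Lambda_n'$ is conformal, and $\Lambda_n=f^{v_n}(V_n(x))$ has bounded shape about $c$.
\item One has $c\to c_0$. Since $c\in\operatorname{Crit_p}(x)$, also $c_0\to c$, so the orbit of $c_0$ enters $\widetilde\Lambda_n$ at a first time $t_n$.
\item Put $\widetilde I_n=\operatorname{Comp}_{c_0}f^{-t_n}(\widetilde\Lambda_n)$. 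Let $u_n$ be the first entry time of $x$ into $\widetilde I_n$, and set $U_n(x)=\operatorname{Comp}_x f^{-(u_n+t_n)}(\Lambda_n)$.
\item Then $h_n=(f^{v_n}|_{V_n(x)})^{-1}\circ f^{u_n+t_n}\colon U_n(x)\to V_n(x)$ is critical at any $a\in f^{-u_n}(c_0)\cap\widetilde U_n(x)$, has bounded degree, and satisfies $x\in U_n(x)\cap V_n(x)$.
\end{itemize}

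\textbf{A second, smaller gap is in Step~1 and your Koebe argument.} Both rely on $f^{l_n}$ having bounded degree on the thick pullback $V_n'(x)$. This is not a plain first-entry fact: first entry only controls $\widetilde V_n(x)$. The paper shows that $f^i(V_n(x)\setminus\widetilde V_n(x))$, and likewise the outer annulus, avoid $\operatorname{Crit}$ for $0\le i\le l_n$. It uses three ingredients:
\begin{itemize}
\item condition $(\star\star)$, which is where $x\notin J_w$ enters;
\item the KSS property $(K_n\setminus\widetilde K_n)\cap\bigcup_{j\ge0}f^j([c_0])=\emptyset$;
\item the hypothesis $\operatorname{Crit}(x)=\operatorname{Crit_p}(x)$.
\end{itemize}
This is where the hypotheses of the proposition are genuinely used. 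Your proposal only invokes them vaguely, in Step~2.
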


\begin{proof}
We use the construction in \cite[Proposition~7.2]{CWY}, with some modifications for our setting. For $x\in J_{42}\setminus J_w$, choose $c_0\in\operatorname{Crit_{p2}}(x)$ and let $\widetilde K_n\subset K_n\subset K_n'$ be the KSS nest around $c_0$ for $n\ge n_0$.
Let $m'_n$ be the depth of $K'_n$. By choosing $n_0$ large enough, we can ensure that for any critical point $c \notin \operatorname{Crit}(x)$, 
\[
\tag{$\star\star$}\label{eq:avoid-critical}
f^k(P_{k+m_n'}(x))\cap P_{m_n'}(c)=\emptyset,
\qquad \forall k\geq0.
\]
This is valid because $x \notin J_w$. 

Let $l_n \ge 0$ be the minimal integer such that $f^{l_n}(x) \in \widetilde{K}_n$. Define $\widetilde{V}_n(x) = \operatorname{Comp}_x(f^{-l_n}(\widetilde{K}_n))$, $V_n(x) = \operatorname{Comp}_x(f^{-l_n}(K_n))$, and $V'_n(x) = \operatorname{Comp}_x(f^{-l_n}(K'_n))$. Then $2 \le \deg(f^{l_n}|_{\widetilde{V}_n(x)}) \le D_1$ for some constant $D_1$ depending on $\operatorname{Crit}(x)$. Assume $n$ is sufficiently large that $V'_n(x)$ contains no critical points of $f$. 

Let $v_n > 0$ be the smallest integer such that $f^{v_n}(\widetilde{V}_n(x))$ contains a critical point $c$. Clearly $c \to c_0$. By~\eqref{eq:avoid-critical}, $x \to c$. Set $\widetilde{\Lambda}_n = f^{v_n}(\widetilde{V}_n(x))$, $\Lambda_n = f^{v_n}(V_n(x))$, and $\Lambda'_n = f^{v_n}(V'_n(x))$ (see Figure~\ref{fig:3}).

\begin{figure}[htbp]
	\centering
	\includegraphics[width=13cm]{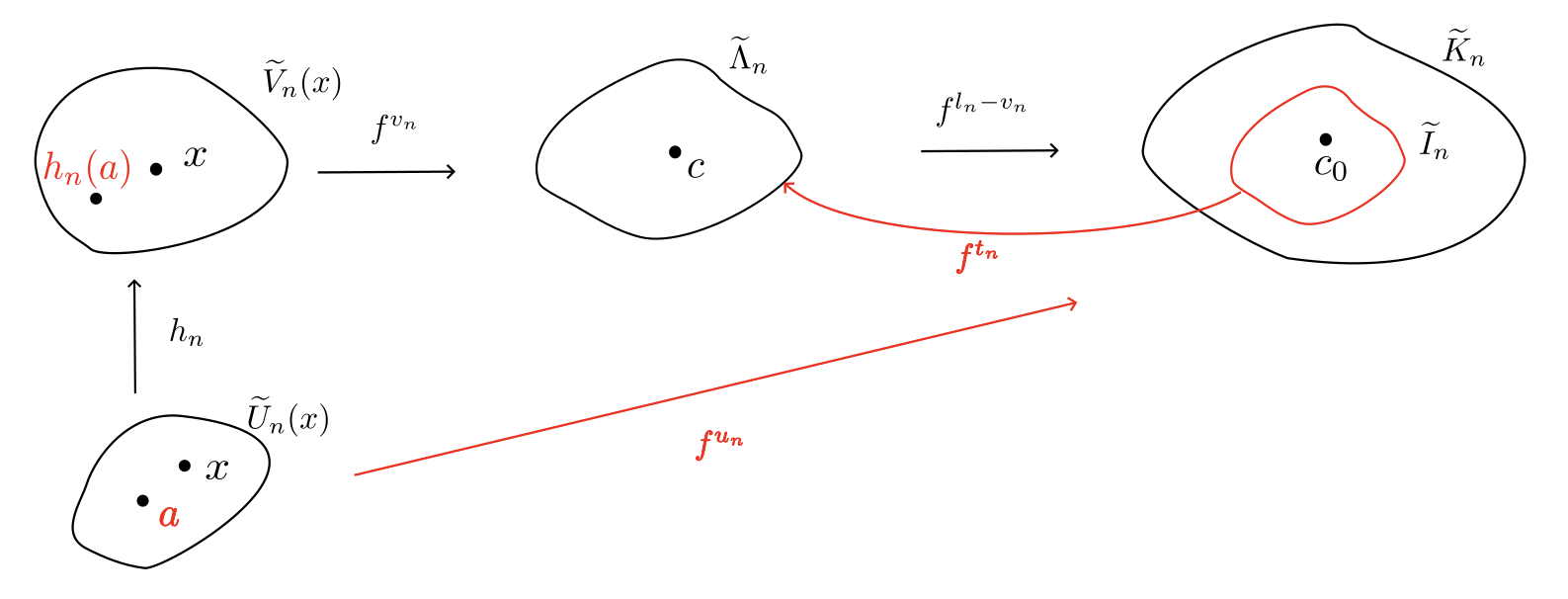}
	\caption{}
	\label{fig:3}
\end{figure}
	
By \cite[Lemma 5.8]{PT} and the fact that $(K_n\setminus\widetilde K_n)\cap\big(\bigcup_{j\geq0}f^j([c_0])\big)=\emptyset$, we obtain
$$ f^i(V_n(x) \setminus \widetilde{V}_n(x)) \cap \operatorname{Forw}(c_0) = \emptyset$$
for all $n\geq n_0$ and $0\le i\le l_n$.

\begin{claim}
	For all $n \ge n_0$ and $0 \le i \leq l_n$, $f^i(V_n(x) \setminus \widetilde{V}_n(x)) \cap (\operatorname{Crit} \setminus \operatorname{Forw}(c_0)) = \emptyset$.
\end{claim} 
\begin{proof}
	Assume by contradiction that there exist integers $n \ge n_0$, $0 \le i \leq l_n$, and a point $c \in \operatorname{Crit} \setminus \operatorname{Forw}(c_0)$ such that $c \in f^i(V_n(x) \setminus \widetilde{V}_n(x))$. By~\eqref{eq:avoid-critical}, $x \to c$. Since $x \in J_{42}\setminus J_w$, it follows that $c \in \operatorname{Crit_p}(x)$, which implies $\operatorname{Forw}(c) \cap \operatorname{Crit}= [c]$. Because $c \to c_0$, we obtain $c_0 \to c$. Hence $c \in \operatorname{Forw}(c_0)$, which is impossible.
\end{proof}
	
Thus $f^i(V_n(x) \setminus \widetilde{V}_n(x)) \cap \operatorname{Crit} = \emptyset$. So $\deg(f^{l_n}|_{V_n(x)}) = \deg(f^{l_n}|_{\widetilde{V}_n(x)})$. A similar proof shows $\deg(f^{l_n}|_{V'_n(x)}) = \deg(f^{l_n}|_{V_n(x)})$.
Therefore, 
$$ 2 \le \deg(f^{l_n}|_{V'_n(x)}) = \deg(f^{l_n}|_{V_n(x)}) = \deg(f^{l_n}|_{\widetilde{V}_n(x)}) \le D_1. $$
By the minimality of $v_n$, $f^{v_n}:V'_n(x)\to\Lambda'_n$ is conformal, which implies
\[
\operatorname{mod}(V_n(x)\setminus\overline{\widetilde V_n(x)})\geq \frac{m}{D_1}
\quad\text{and}\quad
\operatorname{mod}(\Lambda_n\setminus\overline{\widetilde\Lambda_n})\geq \frac{m}{D_1}.
\]
Applying \cite[Lemma 2.2]{RYZ23} and \cite[Lemma 2]{YZ}, together with $\operatorname{Shape}(K_n,c_0)\leq M$, we conclude that
\[
\operatorname{Shape}(V_n(x),x)\leq M_1
\quad\text{and}\quad
\operatorname{Shape}(\Lambda_n,c)\leq M_1
\]
for some constant $M_1>0$ independent of $n$.
	
Let $t_n$ be the first entry time of $c_0$ into $\widetilde{\Lambda}_n$, and set $\widetilde{I}_n = \text{Comp}_{c_0}(f^{-t_n}(\widetilde{\Lambda}_n))$. Let $u_n$ be the first entry time of $x$ into $\widetilde{I}_n$. We then define $\widetilde{U}_n(x) = \text{Comp}_x f^{-u_n}(\widetilde{I}_n)$,
\[U_n(x) = \text{Comp}_x f^{-(u_n+t_n)}(\Lambda_n) \quad \text{and} \quad U'_n(x) = \text{Comp}_x f^{-(u_n+t_n)}(\Lambda'_n). \]
Similarly, for $f^{u_n+t_n}$, there exists a constant $D_2 > 0$ depending on $\text{Crit}(x)$ such that
$$ \deg(f^{u_n+t_n}|_{U_n'(x)}) = \deg(f^{u_n+t_n}|_{U_n(x)}) = \deg(f^{u_n+t_n}|_{\widetilde{U}_n(x)}) \le D_2. $$
Furthermore, it follows that $\text{mod}(U_n(x) \setminus \overline{\widetilde{U}_n(x)}) \ge m/(D_1D_2)$. By \cite[Lemma 2.2]{RYZ23} and \cite[Lemma 2]{YZ} again, we obtain $\text{Shape}(U_n(x), x) \le M_2$ for some constant $M_2 > 0$ independent of $n$.

\vspace{0.2em}
For each large $n$, define $h_n=f^{-v_n}\circ f^{u_n+t_n}$.
Then $h_n:U_n(x)\to V_n(x)$ is a proper holomorphic map of degree between $2$ and $D_2$. Let $a\in f^{-u_n}(c_0)\cap\widetilde U_n(x)$. Then $h'_n(a)=0$.
Set $b=h_n(a)$. Since $f^{v_n}(b)=f^{t_n}(c_0)\in\widetilde\Lambda_n$, we have $b\in\widetilde V_n(x)$. By \cite[Lemma 2.2]{RYZ23},
\[
\operatorname{Shape}(U_n(x),a)\leq C
\quad\text{and}\quad
\operatorname{Shape}(V_n(x),b)\leq C
\]
for some constant $C>0$ independent of $n$. Moreover, since $x\in J_{42}$ lies in a singleton Julia component, $\operatorname{diam}(U_n(x))\to0$ and $\operatorname{diam}(V_n(x))\to0$.
Finally, $x\in U_n(x)\cap V_n(x)$, so condition~(4) in Lemma~\ref{Shen} is immediate. Thus all the conditions in Lemma~\ref{Shen} are satisfied, and the proposition holds.
\end{proof}
\begin{proof}[Proof of Proposition~\ref{line field}]
We proceed by contradiction. Suppose that $J(f)$ admits an invariant line field. By definition, there exists a measurable Beltrami differential $\mu(x) \frac{d\bar{x}}{dx}$ supported on a subset $\Lambda \subset J(f)$ of positive Lebesgue measure.

From Propositions \ref{mes(J_1)=0} and \ref{mes=0}, we have $\operatorname{mes}(J_1\cup J_2\cup J_3\cup J_{41})=0$.
Both $J_0$ and $J_w$ are countable, and hence have zero Lebesgue measure.
By the almost continuity of line fields almost everywhere and Proposition~\ref{J_{42}}, we have $\mu(x)=0$ for almost every $x\in J_{42}\setminus J_w$.
It follows that $\mu=0$ almost everywhere on $J(f)$. This leads to $\operatorname{mes}(J(f) \cap \Lambda) = 0$, a contradiction.
\end{proof}

\section{Holomorphic models and perturbations}
In this section, we derive Theorems~\ref{main theorem} and~\ref{perturbation} by the quasiconformal rigidity and the absence of invariant line fields.

\begin{proof}[Proof of Theorem~\ref{main theorem}]
Let $f$ and $g$ be two simple parabolic maps whose completely invariant parabolic Fatou domains are conformally conjugate. By Proposition~\ref{qc conj}, this conformal conjugacy extends to a quasiconformal conjugacy $H:\widehat{\mathbb{C}}\to\widehat{\mathbb{C}}$, which is conformal on $F(f)$.

Assume by contradiction that $H$ is not globally conformal. Let $\mu_H$ denote the complex dilatation of $H$. Then the support of $\mu_H$, denoted by $\Lambda$, is contained in $J(f)$ and has positive Lebesgue measure. 
Since $H \circ f = g \circ H$ and both $f$ and $g$ are holomorphic, we have $f^* \mu_H = \mu_H$ almost everywhere. It follows that 
\[
f^*\left(\frac{\mu_H}{|\mu_H|}\right) = \frac{\mu_H}{|\mu_H|} \quad \text{almost everywhere on } \Lambda.
\]

Define a measurable Beltrami differential $\nu(x) \frac{d\bar{x}}{dx}$ as follows:
$$\nu = \begin{cases} 
	\frac{\mu_H}{|\mu_H|} & \text{on } \Lambda, \\ 
	0 & \text{elsewhere}. 
\end{cases}$$
By construction, $|\nu| = 1$ on a set of positive measure and $f^* \nu = \nu$ almost everywhere. Hence $\nu(x) \frac{d\bar{x}}{dx}$ is an invariant line field on $J(f)$, contradicting Proposition~\ref{line field}. Therefore, $H$ is conformal on $\widehat{\mathbb{C}}$. 
\end{proof}

\begin{proof}[Proof of Theorem~\ref{perturbation}]
Let $f$ be a simple parabolic map with completely invariant parabolic Fatou domain $U$.	By \cite[Theorem 1.2]{GGP}, there exists a sequence $\{g_n\}_{n\geq 1}$ of rational maps with completely invariant attracting Fatou domains such that $(f,J(f))$ and $(g_n,J(g_n))$ are topologically conjugate for every $n$. The sequence $\{g_n\}_{n\geq 1}$ converges to a simple parabolic map $g$ with a completely invariant parabolic Fatou domain $V$. Furthermore, $(f,J(f))$ and $(g,J(g))$ are topologically conjugate, while $(f,U)$ and $(g,V)$ are conformally conjugate.

By Theorem~\ref{main theorem}, $f$ and $g$ are conformally conjugate on $\widehat{\mathbb{C}}$. Identifying the limit map $g$ with the original map $f$ by this conformal conjugacy, we conclude that the Goldberg--Milnor conjecture holds for simple parabolic maps.
\end{proof}

\noindent Ning Gao\\Academy of Mathematics and Systems Science,\\ Chinese Academy of Sciences, Beijing, 100190, P. R. China.\\ gaoning@amss.ac.cn\vskip 0.24cm

\noindent Yan Gao\\School of Mathematical Sciences, \\Shenzhen University, Shenzhen, 518052, P. R. China.\\gyan@szu.edu.cn	\vskip 0.24cm

\noindent Wenjuan Peng \\State Key Laboratory of Mathematical Sciences, Academy of Mathematics and Systems Science,\\Chinese Academy of Sciences, Beijing, 100190, P. R. China.\\wenjpeng@amss.ac.cn
\end{document}